\numberwithin{equation}{section}
\numberwithin{figure}{section}
\theoremstyle{plain}
\newtheorem{thm}{\protect\theoremname}[section]
  \theoremstyle{remark}
  \newtheorem*{rem*}{\protect\remarkname}
  \theoremstyle{plain}
  \newtheorem{prop}[thm]{\protect\propositionname}
  \theoremstyle{definition}
  \newtheorem{defn}[thm]{\protect\definitionname}
  \theoremstyle{plain}
  \newtheorem{cor}[thm]{\protect\corollaryname}
  \theoremstyle{plain}
  \newtheorem{lem}[thm]{\protect\lemmaname}
  \theoremstyle{definition}
  \newtheorem{example}[thm]{\protect\examplename}
  \providecommand{\corollaryname}{Corollary}
  \providecommand{\definitionname}{Definition}
  \providecommand{\examplename}{Example}
  \providecommand{\lemmaname}{Lemma}
  \providecommand{\propositionname}{Proposition}
  \providecommand{\remarkname}{Remark}
\providecommand{\theoremname}{Theorem}
\begin{document}

\title{On a Hitchin-Thorpe inequality for manifolds with foliated boundaries}

\author{Ahmed J. Zerouali}
\begin{abstract}
We prove a Hitchin-Thorpe inequality for noncompact 4-manifolds with
foliated geometry at infinity by extending on previous work by Dai
and Wei. After introducing the objects at hand, we recall some preliminary
results regarding the $G$-signature formula and the rho invariant,
which are used to obtain expressions for the signature and Euler characteristic
in our geometric context. We then derive our main result, and present
examples.

\medskip

\noindent R\'esum\'e: En se basant sur des travaux de Dai et Wei,
on d\'emontre une in\'egalit\'e de Hitchin-Thorpe pour vari\'et\'e
non-compactes de dimension 4, et munies d'une g\'eom\'etrie feuillet\'ee
\`a l'infini. Apr\`es avoir d\'efini les notions pertinentes \`a
cette \'etude, on rappelle quelques r\'esultats concernant la formule
de $G$-signature et l'invariant rho, qu'on utilise ici pour obtenir
des expressions de la signature et de la caract\'eristique d'Euler
dans notre cadre g\'eom\'etrique. On d\'emontre ensuite nos r\'esultats
principaux avant de pr\'esenter quelques exemples.
\end{abstract}

\keywords{Einstein metrics; Hitchin-Thorpe inequality; Eta invariant; Gravitational
Instantons; G-signature.}

\maketitle

\section{\textbf{Introduction}}

If $M$ is a closed compact and oriented 4-dimensional Einstein manifold,
then its Euler characteristic $\chi(M)$ and its (Hirzebruch) signature
$\tau(M)$ must satisfy $\chi(M)\geq3|\tau(M)|/2$. This is the statement
of the original Hitchin-Thorpe inequality, which was proved in \cite{[H]}
and \cite{[ST]}, and subsequently extended to various contexts, as
in \cite{[Grom82]}, \cite{[Kot98]}, \cite{[Kot12]} and \cite{[Sam98]}
to name a few. The most relevant generalization for the present work
is that of Dai and Wei in \textcolor{black}{\cite{[DaW]}, where the
manifolds of interest are noncompact and have fibred geometries at
infinity.}

In the present note, we derive a Hitchin-Thorpe inequality for manifolds
with a foliation structure (resolved by a fibration) at infinity.
To be more precise, if $\overline{M}$ is the compactification at
infinity of such a space $M$, \textcolor{black}{the type of boundary
$W=\partial\overline{M}$ that we are considering is obtained as a
quotient $W=\widetilde{W}/\Gamma$, where $\widetilde{W}$ is the
total space of a smooth fibration} \textcolor{black}{$F\rightarrow\widetilde{W}\overset{\phi}{\rightarrow}\Sigma$,
and $\Gamma$ is a finite group acting smoothly and freely on $\widetilde{W}$,
and also acting smoothly on the base $\Sigma$ in such a way that
the projection $\phi:\widetilde{W}\rightarrow\Sigma$ is $\Gamma$-equivariant:
$\phi(g\cdot w)=g\cdot\phi(w)$ for all $w\in\widetilde{W}$ and $g\in\Gamma$.
If $\nu:\widetilde{W}\rightarrow\widetilde{W}/\Gamma$ is the quotient
map with respect to the group action, then the fibration $\widetilde{W}$
induces a foliation atlas $\mathcal{F}$ on the boundary $W=\widetilde{W}/\Gamma$
whose leaves are the images of the fibres $F$ under $\nu$.}

In addition to the foliation on $W$, we assume that $\overline{M}$
is endowed with a $\mathcal{F}$-metric or a foliated cusp metric,
which we now introduce. Let $c:I\times W\rightarrow M$ be a local
diffeomorphism describing a tubular neighbourhood of the boundary
(with $I=[0,1]$), $x\in\mathcal{C}^{\infty}(M,\mathbb{R}_{+})$ a
boundary defining function for $W$, and $\hat{\nu}:=\text{Id}_{I}\times\nu$
be the covering $I_{x}\times\widetilde{W}\rightarrow I_{x}\times W$.
A metric \ensuremath{g_{\mathcal{F}}}
 on \ensuremath{M}
 is called a (product-type) $\mathcal{F}$-metric (\cite{[R]}, p.1314)
if it satisfies 
\[
\hat{\nu}^{*}c^{*}g_{\mathcal{F}}=\frac{dx^{2}}{x^{4}}+\frac{\phi^{*}h}{x^{2}}+\kappa\in\mathcal{C}^{\infty}\big(I\times\widetilde{W},S^{2}T^{*}(I\times\widetilde{W})\big),
\]
where $h$ is a $\Gamma$-invariant metric on $\Sigma$, and $\kappa\in\mathcal{C}^{\infty}(\widetilde{W},S^{2}T^{*}\widetilde{W})$
is a $\Gamma$-invariant tensor restricting to a metric on the fibres
of $\widetilde{W}\rightarrow\Sigma$. With the same notations, a metric
$g_{\mathcal{F}c}$ on $M$ is a (product-type) foliated cusp metric
(\cite{[R]}, (1.6) p.1314) if it is such that 
\[
\hat{\nu}^{*}c^{*}g_{\mathcal{F}c}=\frac{dx^{2}}{x^{2}}+\phi^{*}h+x^{2}\kappa\in\mathcal{C}^{\infty}\big(I\times\widetilde{W},S^{2}T^{*}(I\times\widetilde{W})\big).
\]
Manifolds admitting such metrics are the main topic of \cite{[R]},
where a pseudo-differential calculus adapted to this setting is constructed,
before addressing the index theory of the associated Dirac-type operators.
More recently, a study of their Hodge cohomology has been carried-out
in \cite{[GRR]}.

We start our treatment by recalling some facts pertaining to the $G$-signature
formula and the Atiyah-Bott-Lefschetz fixed-point formula in the setting
of manifolds with boundaries, on which a finite group acts smoothly
and has only isolated fixed-points in the interior. The results are
then established in the third section, first by obtaining the signature
and the Euler characteristic for a noncompact $4k$-dimensional manifold
with a metric which is asymptotic to a foliated boundary or a foliated
cusp metric, before specializing to the four dimensional case where
\ensuremath{\widetilde{W}}
 is a circle bundle over a compact Riemann surface, and finally deriving
the Hitchin-Thorpe inequality. In the last section we illustrate the
results with some examples.

\section{\textbf{Background Material}}

We give a brief exposition of the results needed for the upcoming
sections.

\subsection{The G-Signature Formula\textmd{\label{subsec2.1}}}

The $G$-signature formula is a special case of the Lefschetz fixed
point formula for elliptic complexes. In this section, $(\widetilde{X},g)$
is a $4k$-dimensional compact oriented Riemannian manifold with nonempty
boundary $\widetilde{W}$, and a metric $g$ which is of product type
near the boundary. Let $\Gamma\subset\text{Isom}(\widetilde{X})$
be a finite group of orientation-preserving isometries such that the
action has only isolated fixed points in the interior of $\widetilde{X}$.
We employ the following notations:
\begin{itemize}
\item $^{g}\widetilde{B}$ is the (even) signature operator on $\widetilde{X}$,
and $^{g}\widetilde{A}$ is the odd signature operator on $\widetilde{W}$
(induced by $g_{|\widetilde{W}}$).
\item For an eigenvalue $\lambda\in\text{Spec}\phantom{}^{g}\widetilde{A}\smallsetminus\{0\}$
($^{g}\widetilde{A}$ is self-adjoint and elliptic), its associated
eigenspace is $E_{\lambda}\subset L^{2}(\widetilde{W},\Lambda T^{*}\widetilde{W})$.
\item For $r=0,\cdots,m=4k$, $\widehat{H}^{r}$ denotes the image of the
relative cohomology group $H^{r}(\widetilde{X},\widetilde{W};\mathbb{C})$
in the absolute cohomology $H^{r}(\widetilde{X};\mathbb{C})$. Also,
$\widehat{H}_{\pm}^{2k}$ are the subspaces of $\widehat{H}^{2k}$
on which the non-degenerate bilinear form induced by the cup product
is positive or negative definite.
\item For a fixed element $a\in\Gamma$, $a^{*}|_{\widehat{H}^{r}}$ is
the morphism induced in cohomology, \ensuremath{a_{\lambda}^{*}}
 is the morphism induced on $E_{\lambda}$, and $a_{*}|_{x}:T_{x}\widetilde{X}\rightarrow T_{a(x)}\widetilde{X}$
is the differential at $x\in\widetilde{X}$.
\end{itemize}
Let $\phi:\widetilde{X}\rightarrow\widetilde{X}$ be an isometry that
has only isolated fixed-points, and $dVol^{g}$ the volume form associated
to $(\widetilde{X},g)$. For a fixed-point $x\in\widetilde{X}$ of
$\phi$, the differential $\phi_{*|x}\in\text{Aut}(T_{x}\widetilde{X})$
is also an isometry, and we have an orthogonal decomposition $T_{x}\widetilde{X}=\oplus_{j=1}^{2k}V_{j}$
into $\phi$-invariant 2-planes. We choose an orthogonal basis $\{e_{j},e'_{j}\}$
of $V_{j}$ for all $j=1,\cdots,2k$ such that 
\[
(dVol^{g})_{x}(e_{1},e'_{1},\cdots,e_{2k},e'_{2k})=1,
\]
With respect to this basis, $\phi_{*|x}$ is block-diagonal with components
of the form
\[
\left(\phi_{*|x}\right)_{|V_{j}}=\left(\begin{array}{cc}
\cos\left[\theta_{\phi,j}(x)\right] & -\sin\left[\theta_{\phi,j}(x)\right]\\
\sin\left[\theta_{\phi,j}(x)\right] & \cos\left[\theta_{\phi,j}(x)\right]
\end{array}\right).
\]
The numbers $\{\theta_{\phi,j}(x)\}_{j=1}^{2k}$ constitute a \textit{coherent
system of angles} for $\phi_{*|x}$ (\cite{[AB2]}, p.473).

Now, for a fixed isometry $a\in\Gamma$, we define the following topological
quantities:
\begin{itemize}
\item The $G$\textit{-signature} with respect to $a\in\Gamma$ is given
by: 
\[
\tau(a,\widetilde{X})=\text{Tr}\left(a^{*}|_{\widehat{H}_{+}^{2k}}\right)-\text{Tr}\left(a^{*}|_{\widehat{H}_{-}^{2k}}\right).
\]

\item The $G$-\textit{eta function} with respect to $a$ is 
\[
\eta_{a}(s,\phantom{}^{g}\widetilde{A})={\displaystyle \sum_{\lambda\in\text{Spec}\phantom{}^{g}\widetilde{A}\smallsetminus\{0\}}sgn\lambda|\lambda|^{-s}\text{Tr}(a_{\lambda}^{*}).}
\]
This function is holomorphic for $Re(s)\gg0$ and admits a meromorphic
continuation to the whole complex plane with no pole at $s=0$ (\cite{[Don]}).
The associated $G$-eta invariant is $\eta_{a}(\phantom{}^{g}\widetilde{A})\equiv\eta_{a}(0,\phantom{}^{g}\widetilde{A})$.
For $a=\text{Id}$, this is the usual eta invariant.
\item Let $\text{Fix}(a)\subset\widetilde{X}$ be the fixed point set of
$a\in\Gamma$. For $z\in\text{Fix}(a)$, the signature defect at that
point is given by 
\[
\text{def}(a,\phantom{}^{g}\widetilde{B})[z]=\prod_{j=1}^{2k}\left(\frac{\lambda_{j}+1}{\lambda_{j}-1}\right)=\prod_{j=1}^{2k}(-i)\cot\left(\theta_{a,j}(z)/2\right),
\]
where $\lambda_{j}=\exp\left(i\theta_{a,j}(z)\right)$ is the \textit{j}-th
eigenvalue of the linear map $a_{*}|_{z}\in\text{Aut}(T_{z}\widetilde{X})$
(Theorem 4.5.2 \cite{[G]}, formula (7.2) \cite{[AB2]}).
\end{itemize}
As a special case of a theorem proved by Donnelly (\cite{[Don]},
Theorem 2.1; \cite{[G]} Theorem 4.5.8 ), we have the $G$-signature
formula:
\begin{thm}
Under the hypotheses of this subsection, the G-signature formula for
\ensuremath{a\in\Gamma\smallsetminus\{\text{Id}\}}
 reads 
\[
\tau(a,\widetilde{X})=\sum_{z\in\text{Fix}(a)}\text{def}(a,\phantom{}^{g}\widetilde{B})[z]-\frac{1}{2}\eta_{a}(\phantom{}^{g}\widetilde{A}).
\]
\end{thm}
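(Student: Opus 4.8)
The plan is to obtain this formula as the equivariant Atiyah--Patodi--Singer (APS) index theorem for the even signature operator ${}^{g}\widetilde{B}$, which is the route taken by Donnelly. Since $g$ is of product type near $\widetilde{W}$, in a collar $[0,1)_{u}\times\widetilde{W}$ the operator ${}^{g}\widetilde{B}$ takes the form $\sigma(\partial_{u}+{}^{g}\widetilde{A})$ for a bundle isomorphism $\sigma$, so imposing the APS spectral boundary condition attached to the self-adjoint elliptic operator ${}^{g}\widetilde{A}$ yields a Fredholm problem. The isometry $a\in\Gamma$ commutes with ${}^{g}\widetilde{B}$ and preserves the collar and the boundary condition, hence acts on the finite-dimensional kernel and cokernel, and one sets $\text{ind}_{a}({}^{g}\widetilde{B})=\text{Tr}(a|_{\ker})-\text{Tr}(a|_{\text{coker}})$.

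First I would evaluate this equivariant index analytically. By the equivariant McKean--Singer formula together with Donnelly's analysis of the supertrace of the heat kernel on a manifold with boundary, the interior contribution localizes near $\text{Fix}(a)$; because the metric is a product near $\partial\widetilde{X}$ the boundary curvature terms drop out, and since $a\neq\text{Id}$ has only isolated interior fixed points the interior term is exactly $\sum_{z\in\text{Fix}(a)}\text{def}(a,{}^{g}\widetilde{B})[z]$, the local Atiyah--Bott--Singer signature contribution, which for a coherent system of angles equals $\prod_{j}(-i)\cot(\theta_{a,j}(z)/2)$. The APS boundary defect then contributes the equivariant reduced eta term, here $\tfrac{1}{2}\eta_{a}({}^{g}\widetilde{A})$ of the odd signature operator, its well-definedness resting on the holomorphy of $\eta_{a}(s,{}^{g}\widetilde{A})$ at $s=0$ cited above. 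This gives $\text{ind}_{a}({}^{g}\widetilde{B})=\sum_{z\in\text{Fix}(a)}\text{def}(a,{}^{g}\widetilde{B})[z]-\tfrac{1}{2}\eta_{a}({}^{g}\widetilde{A})$.

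It then remains to identify this equivariant index with the topological $G$-signature $\tau(a,\widetilde{X})=\text{Tr}(a^{*}|_{\widehat{H}_{+}^{2k}})-\text{Tr}(a^{*}|_{\widehat{H}_{-}^{2k}})$, which is the genuinely delicate point and where I expect the real work to be. Following the Hodge-theoretic argument of Atiyah--Patodi--Singer, carried out equivariantly by Donnelly, one represents the $L^{2}$-solutions of the APS problem by harmonic forms whose boundary restrictions lie in the nonnegative spectral subspace of ${}^{g}\widetilde{A}$, checks that these realize $\widehat{H}^{*}(\widetilde{X})=\text{im}\big(H^{*}(\widetilde{X},\widetilde{W};\mathbb{C})\to H^{*}(\widetilde{X};\mathbb{C})\big)$, and matches the $\mathbb{Z}/2$-grading of ${}^{g}\widetilde{B}$ with the sign of the cup-product form on the middle piece $\widehat{H}^{2k}$, the contributions of the degrees $r\neq 2k$ cancelling pairwise under Poincar\'e--Lefschetz duality and so not affecting the equivariant index. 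Since $a$ acts compatibly on the de Rham complex, its harmonic representatives and the duality pairing, its traces on the graded pieces assemble into $\tau(a,\widetilde{X})$; combined with the analytic computation this yields the stated $G$-signature formula. Since the heat-kernel and fixed-point steps only require the standard local Atiyah--Bott--Singer calculation recorded in \cite{[AB2]} and \cite{[G]}, the main obstacle is the equivariant bookkeeping in the Hodge/cohomology identification and the pairwise cancellation of the off-middle degrees.
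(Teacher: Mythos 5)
The paper offers no proof of this statement: it is quoted directly as a special case of Donnelly's equivariant Atiyah--Patodi--Singer theorem (\cite{[Don]}, Theorem 2.1; see also \cite{[G]}, Theorem 4.5.8). Your proposal is a correct reconstruction of exactly that argument --- equivariant APS index for the signature operator with spectral boundary condition, localization of the interior term at the isolated fixed points, and the Hodge-theoretic identification of the equivariant index with $\tau(a,\widetilde{X})$ --- including the correct handling of the factor $\tfrac{1}{2}$ coming from using the full odd signature operator rather than its restriction to even forms, so it matches the route the paper relies on.
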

\begin{rem*}
In \cite{[APS1]} and \cite{[Don]}, the authors use the restriction
of $\phantom{}^{g}\widetilde{A}$ to even forms rather than the odd
signature operator itself, which is why they do not have this extra
$(1/2)$ factor that appears here and in\cite{[DaW]} next to the
eta invariants. 
\end{rem*}

\subsection{The rho invariant of a finite covering\textmd{\label{subsec2.2}}}

With the same notations as above, $\Gamma$ acts smoothly and freely
on $(\widetilde{W},g_{|\widetilde{W}})$, and $\nu:\widetilde{W}\rightarrow W=\widetilde{W}/\Gamma$
is the induced covering projection. In this subsection, $A_{g}$ denotes
the odd signature operator associated to the metric induced by $g\in\mathcal{C}^{\infty}(\widetilde{X},S^{2}T^{*}\widetilde{X})$
on $W$ ($\phantom{}^{g}\widetilde{A}=\nu^{*}A_{g}$).

Atiyah, Patodi and Singer studied the signature with local coefficients
in \cite{[APS2]}. For a finite covering \ensuremath{\widetilde{W}\overset{\nu}{\rightarrow}W}
, and a one-dimensional unitary representation $\alpha\text{ of }\Gamma$
with associated flat bundle $E_{\alpha}\rightarrow W$, they obtain
the following expression ((I.5) in \cite{[Don]}): 
\[
\eta(A_{g,\alpha})=\frac{1}{|\Gamma|}\sum_{a\in\Gamma}\eta_{a}(\phantom{}^{g}\widetilde{A})\chi_{\alpha}(a),
\]
where $A_{g,\alpha}$ is the operator induced by $A_{g}$ on $\Lambda T^{*}W\otimes E_{\alpha}$
and $\chi_{\alpha}(a)$ is the character of $a\in\Gamma$. If $\alpha$
is the trivial representation, this formula reduces to ((I.6) \cite{[Don]})
\[
\eta(A_{g})=\frac{1}{|\Gamma|}\sum_{a\in\Gamma}\eta_{a}(\phantom{}^{g}\widetilde{A}).
\]
The \textit{rho invariant} of the finite covering $\widetilde{W}\overset{\nu}{\rightarrow}W$
is then defined as: 
\[
\rho(\widetilde{W},W)=\frac{1}{2}\left[\eta(\phantom{}^{g}\widetilde{A})-|\Gamma|\eta(A_{g})\right]=-\frac{1}{2}\sum_{a\neq Id}\eta_{a}(\phantom{}^{g}\widetilde{A}).
\]
Eta invariants depend on the choice of a Riemannian metric, but this
is not the case for the rho invariant of a finite covering:
\begin{prop}
Let $g_{i}\text{, }i=0,1$ be two Riemannian metrics on $W$, $\tilde{g}_{i}=\nu^{*}g_{i}$
the pullback metrics on $\widetilde{W}$, and $\rho_{i}(\widetilde{W},W)$
the corresponding rho invariants; then 
\[
\rho_{0}(\widetilde{W},W)=\rho_{1}(\widetilde{W},W).
\]
\end{prop}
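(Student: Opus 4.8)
The plan is to use the $G$-signature formula from the previous subsection as a bridge between the analytic $G$-eta invariants and purely topological data, which cannot depend on the metric. First I would pick a compact oriented $4k$-manifold $\widetilde{X}$ with $\partial\widetilde{X}=\widetilde{W}$ on which $\Gamma$ acts by orientation-preserving diffeomorphisms, extending the free boundary action, with only isolated fixed points in the interior; concretely one can take a cylinder construction or, since $\widetilde{W}$ bounds $\widetilde{W}\times[0,1]$-type collars glued into a $\Gamma$-manifold (for instance the mapping-cylinder/coning-off of the free action, or simply the disk bundle associated to the covering viewed $\Gamma$-equivariantly), arrange that such an $\widetilde{X}$ exists. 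The key point is that $\widetilde{X}$ is fixed once and for all, independent of the metric on $\widetilde{W}$.

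Next, given the two metrics $g_0,g_1$ on $W$, pull them back to $\Gamma$-invariant metrics $\tilde g_0,\tilde g_1$ on $\widetilde{W}$, and extend each to a $\Gamma$-invariant metric $G_i$ on $\widetilde{X}$ that is of product type near $\widetilde{W}$ (averaging over $\Gamma$ to ensure invariance, and using a collar to make it product-type). Apply the $G$-signature formula of Theorem 2.1 to $(\widetilde{X},G_i)$ for each $a\in\Gamma\smallsetminus\{\mathrm{Id}\}$:
\[
\tau(a,\widetilde{X})=\sum_{z\in\mathrm{Fix}(a)}\mathrm{def}(a,\phantom{}^{G_i}\widetilde{B})[z]-\tfrac12\,\eta_a(\phantom{}^{G_i}\widetilde{A}).
\]
The left-hand side $\tau(a,\widetilde{X})$ is a topological invariant of the pair $(\widetilde{X},a)$ and does not involve the metric at all. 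Moreover the local signature defect $\mathrm{def}(a,\phantom{}^{G_i}\widetilde{B})[z]$ at an isolated interior fixed point depends only on the rotation angles $\theta_{a,j}(z)$ of $a_{*|z}$, i.e. on the conjugacy class of $a_{*|z}$ in $\mathrm{SO}(T_z\widetilde{X})$, which is a homotopy/topological datum of the action germ at $z$ — here one can either choose $G_0,G_1$ to agree near the (finitely many) interior fixed points, or invoke that the angles are rigid for a finite-order isometry, so $\sum_z \mathrm{def}(a,\phantom{}^{G_0}\widetilde{B})[z]=\sum_z \mathrm{def}(a,\phantom{}^{G_1}\widetilde{B})[z]$. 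Subtracting the two instances of the formula then yields $\eta_a(\phantom{}^{G_0}\widetilde{A})=\eta_a(\phantom{}^{G_1}\widetilde{A})$ for every $a\neq\mathrm{Id}$, and since $\phantom{}^{G_i}\widetilde{A}=\nu^*A_{g_i}$ depends on $G_i$ only through $\tilde g_i=G_i|_{\widetilde{W}}=\nu^*g_i$, summing over $a\neq\mathrm{Id}$ and using the displayed formula $\rho(\widetilde{W},W)=-\tfrac12\sum_{a\neq\mathrm{Id}}\eta_a(\phantom{}^{g}\widetilde{A})$ gives $\rho_0(\widetilde{W},W)=\rho_1(\widetilde{W},W)$.

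The main obstacle is the existence and control of the bounding $\Gamma$-manifold $\widetilde{X}$: one must produce a compact oriented $\widetilde{X}$ with $\partial\widetilde{X}=\widetilde{W}$ carrying an orientation-preserving $\Gamma$-action that restricts to the given free action on the boundary and has only \emph{isolated} interior fixed points, so that Theorem 2.1 applies verbatim. If the oriented equivariant bordism class of the free $\Gamma$-action on $\widetilde{W}$ is nonzero one cannot bound at all; the standard fix is to pass to a disjoint union of several copies of $\widetilde{W}$ (which multiplies both $\eta_a$ and $\rho$ by the same positive integer, harmlessly), or to allow $\widetilde{X}$ to have interior fixed-point \emph{sets} of positive dimension and use the general Atiyah–Patodi–Singer/Donnelly $G$-signature theorem with contributions integrated over $\mathrm{Fix}(a)$ — in which case one arranges $G_0,G_1$ to coincide on a neighborhood of $\mathrm{Fix}(a)$ so the fixed-point integrals cancel in the subtraction. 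A secondary technical point, which is routine, is the $\Gamma$-equivariant extension of the metrics to a product-type collar; this is handled by averaging and a partition of unity, and does not affect the boundary restriction. Granting the bounding manifold, the argument is then a clean difference of two applications of the $G$-signature formula.
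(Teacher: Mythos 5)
Your strategy is genuinely different from the paper's, and its viability hinges entirely on the step you yourself flag as ``the main obstacle'': the existence of a compact oriented $\Gamma$-manifold $\widetilde{X}$ with $\partial\widetilde{X}=\widetilde{W}$ extending the given free boundary action and having only isolated interior fixed points. This is a genuine gap, not a routine technicality: a free $\Gamma$-action on a closed $(4k-1)$-manifold need not bound equivariantly at all, and nothing in the hypotheses of the Proposition supplies such an $\widetilde{X}$. Your proposed repairs are the right instinct but are themselves unproved assertions at the level of the write-up: passing to $N$ disjoint copies requires knowing that $N\cdot[\widetilde{W}\rightarrow W]$ vanishes in the reduced oriented bordism of $B\Gamma$ (true, since $\widetilde{\Omega}^{SO}_{*}(B\Gamma)$ is torsion for finite $\Gamma$, but this is a nontrivial input you would have to cite and integrate), and the alternative of allowing positive-dimensional fixed sets takes you outside the version of the $G$-signature formula stated in subsection \ref{subsec2.1}. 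Granting a bounding manifold, the rest of your argument is sound --- $\tau(a,\widetilde{X})$ is metric-independent by definition, and the defects depend only on the eigenvalues of $a_{*|z}$, which are roots of unity determined by the smooth action alone --- and it would in fact prove the stronger statement that each individual $\eta_{a}(\phantom{}^{g}\widetilde{A})$, $a\neq\mathrm{Id}$, is independent of the $\Gamma$-invariant metric.

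The paper avoids the bordism question entirely with a short variational argument: take the cylinders $\widetilde{M}=\widetilde{W}\times I$ and $M=W\times I$ with the interpolating metric $h=(1-s)g_{0}+sg_{1}+ds^{2}$ and its pullback, apply the \emph{ordinary} Atiyah--Patodi--Singer signature theorem to each, and form the combination $\rho_{1}-\rho_{0}$ directly from the definition $\rho=\frac{1}{2}[\eta(\phantom{}^{g}\widetilde{A})-|\Gamma|\eta(A_{g})]$. The difference of $L$-integrals vanishes because $\widetilde{\nu}$ is a $|\Gamma|$-sheeted local isometry, and $\tau(\widetilde{M})=\tau(M)=0$ because the image of $H^{2k}(\widetilde{M},\partial\widetilde{M})$ in $H^{2k}(\widetilde{M})$ is trivial for a cylinder. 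This buys self-containedness and elementarity (no equivariant bordism, no fixed-point theory), at the cost of only controlling the combination $\eta(\phantom{}^{g}\widetilde{A})-|\Gamma|\eta(A_{g})$ rather than each $\eta_{a}$ separately --- which is all the Proposition asks for. If you want to keep your route, you must either prove the existence of the equivariant null-bordism (after stabilizing by copies) or switch to the cylinder argument.
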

\begin{proof}
Let $\widetilde{M}=\widetilde{W}\times I\text{, }M=W\times I\text{, }\widetilde{\nu}=\nu\times\text{Id}_{I}$
where $I=[0,1]\subset\mathbb{R}$. For $s\in I$, we have a metric
on $M$ given by $h=(1-s)g_{0}+sg_{1}+ds^{2}$ with pullback $\widetilde{h}=\widetilde{\nu}^{*}h$.
If $\widetilde{A}_{i}\text{ and }A_{i}$ are the odd signature operators
associated to $\widetilde{g}_{i}\text{ and }g_{i}$, then applying
the Atiyah-Patodi-Singer theorem to \ensuremath{\widetilde{M}\text{ and }M}
 yields
\begin{align*}
\tau(\widetilde{M})= & \int_{\widetilde{M}}L(\widetilde{M})-\frac{1}{2}[\eta(\widetilde{A}_{1})-\eta(\widetilde{A}_{0})],\\
\tau(M)= & \int_{M}L(M)-\frac{1}{2}[\eta(A_{1})-\eta(A_{0})].
\end{align*}
 From the definition of the $\rho_{i}$ and these formulas, we have
\[
\rho_{1}(\widetilde{W},W)-\rho_{0}(\widetilde{W},W)=\left[\int_{\widetilde{M}}L(\widetilde{M})-|\Gamma|\int_{{M}}L({M})\right]-\tau(\widetilde{M})+|\Gamma|\tau({M})
\]
The first term in the RHS vanishes since $\widetilde{\nu}$ is a local
isometry. To see that $\tau(\widetilde{M})$ vanishes, we consider
the long exact sequence of relative cohomology 
\[
\cdots\overset{\delta^{*}}{\rightarrow}H^{2k}(\widetilde{M},\partial\widetilde{M})\overset{j}{\rightarrow}H^{2k}(\widetilde{M})\overset{i}{\rightarrow}H^{2k}(\partial\widetilde{M})\overset{\delta^{*}}{\rightarrow}H^{2k+1}(\widetilde{M},\partial\widetilde{M})\overset{j}{\rightarrow}\cdots.
\]
Since $\partial\widetilde{M}=\widetilde{W}\sqcup-\widetilde{W}$,
$H^{2k}(\widetilde{M})=H^{2k}(\widetilde{W})$ and $H^{2k}(\partial\widetilde{M})=H^{2k}(\widetilde{W})\oplus H^{2k}(\widetilde{W})$,
the map $i$ is injective, and the image of $H^{2k}(\widetilde{M},\partial\widetilde{M})$
in $H^{2k}(\widetilde{M})$ is trivial, which means that $\tau(\widetilde{M})=0$.
With the same argument we get $\tau(M)=0$, from which $\rho_{1}=\rho_{0}$
follows.
\end{proof}

\subsection{Manifolds with foliated boundaries}

Let $M$ be a connected, oriented and noncompact manifold of dimension
$4k$, $k\ge1$.
\begin{defn}
\label{Def_mfld_foliated_bndry}We say that a complete Riemannian
manifold $(M,g)$ has a \textit{foliated geometry at infinity resolved
by a fibration} if there exists a compactification at infinity $\overline{M}=M\cup W$
by a boundary $W=\partial\overline{M}$, and a field of symmetric
bilinear forms $\bar{g}:\overline{M}\rightarrow S^{2}T^{*}\overline{M}$
satisfying the following conditions:\\
(i) The bilinear form $\bar{g}$ coincides with the initial metric
on the interior: $\bar{g}_{|M}=g\in\Gamma(M,S^{2}T^{*}M)$;\\
(ii) The boundary $W$ is the base of a finite covering $\nu:\widetilde{W}\rightarrow W=\widetilde{W}/\Gamma$,
where $\Gamma$ is a finite group of orientation-preserving isometries
acting freely on $\widetilde{W}$;\\
(iii) The manifold $\ensuremath{\widetilde{W}}$ is also the total
space of a smooth fibration $F\rightarrow\widetilde{W}\overset{\phi}{\rightarrow}\Sigma$,
where $F$ and $\Sigma$ are closed compact oriented manifolds with
$\dim F>0$;\\
(vi) The group $\Gamma$ acts smoothly on the base $\Sigma$, and
the projection $\phi:\widetilde{W}\rightarrow\Sigma$ is $\Gamma$-equivariant.
\end{defn}
Since we will only consider foliations resolved by a fibration in
this paper, we will simply say that $M$ has a \textit{foliated geometry
at infinity} when it satisfies the definition above, and that the
compactification $\overline{M}$ has a \textit{foliated boundary}.
Under these assumptions, we will use $\mathcal{F}$ to denote the
foliation of the boundary $W=\partial\overline{M}$ by the images
of the fibres $F$ under the projection $\nu:\widetilde{W}\rightarrow W$.

For the remaining of this section, we let $\overline{M}$ be a manifold
with foliated boundary, and we fix a boundary defining function (bdf)
$x\in\mathcal{C}^{\infty}(M,\mathbb{R}_{+})$ for $\overline{M}$,
so that $W=\{x=0\}$ and $x>0$ on $M=\overline{M}\smallsetminus W$.
We will consider two classes of foliated cusp vector fields on $\overline{M}$,
namely the $\mathcal{F}$-vector fields:
\[
\mathcal{V}_{\mathcal{F}}(\overline{M}):=\left\{ \xi\in\mathcal{C}^{\infty}(T\overline{M})\phantom{.}\big|\phantom{.}\xi\cdot x\in x^{2}\mathcal{C}^{\infty}(\overline{M})\text{, }\xi_{|\partial\overline{M}}\in\mathcal{C^{\infty}}(\partial M,T\mathcal{F})\right\} ,
\]
and the $\mathcal{F}_{c}$-vector fields, defined as $\mathcal{V}_{\mathcal{F}_{c}}(\overline{M}):=\frac{1}{x}\cdot\mathcal{V}_{\mathcal{F}}(\overline{M})$.
These are smooth sections of special vector bundles over $\overline{M}$
(\cite{[R]}, section 1):
\begin{defn}
The $\mathcal{F}$-tangent bundle $^{\mathcal{F}}T\overline{M}$ is
the unique vector bundle over $\overline{M}$ such that $\mathcal{V}_{\mathcal{F}}(\overline{M})\simeq\mathcal{C}^{\infty}\left(\overline{M},\phantom{}^{\mathcal{F}}T\overline{M}\right)$,
and the $\mathcal{F}_{c}$-tangent bundle $^{\mathcal{F}_{c}}T\overline{M}$
is the one such that $\mathcal{V}_{\mathcal{F}_{c}}(\overline{M})\simeq\mathcal{C}^{\infty}\left(\overline{M},\phantom{}^{\mathcal{F}_{c}}T\overline{M}\right)$.
\end{defn}
Let $c:[0,1[_{x}\times W\rightarrow\overline{M}$ be a local diffeomorphism
onto a tubular neighbourhood of the boundary, and letting $\Gamma$
act trivially on $[0,1[$, define the covering projection 
\[
\hat{\nu}:[0,1[_{x}\times\widetilde{W}\longrightarrow[0,1[_{x}\times W\text{, }(x,p)\longmapsto(x,\nu(p)).
\]
Suppose $\mathcal{U}_{\widetilde{W}}\subset\widetilde{W}$ is an open
subset on which the covering projection $\nu$ restricts to a diffeomorphism
$\mathcal{U}_{\widetilde{W}}\overset{\nu}{\rightarrow}\mathcal{U}_{W}$.
The bundles $^{\mathcal{F}}T\overline{M}$ and $^{\mathcal{F}_{c}}T\overline{M}$
are respectively related to the $\phi$- and the $d$-tangent bundles
defined in section 3 of \cite{[DaW]} by the following isomorphisms:
\[
^{\mathcal{F}}T([0,1[_{x}\times\mathcal{U}_{W})\simeq\hat{\nu}_{*}\left[\phantom{}^{\phi}T([0,1[_{x}\times\mathcal{U}_{\widetilde{W}})\right]\text{, }{}^{\mathcal{F}_{c}}T([0,1[_{x}\times\mathcal{U}_{W})\simeq\hat{\nu}_{*}\left[\phantom{}^{d}T([0,1[_{x}\times\mathcal{U}_{\widetilde{W}})\right].
\]

We will work with the following types of metrics on $\overline{M}$:
\begin{defn}
\label{Def_Foliated_metrics}An \textit{exact foliated boundary metric}
$g_{\mathcal{F}}$ ($\mathcal{F}$-metric for short) on $\overline{M}$
is a smooth Riemannian metric on $^{\mathcal{F}}T\overline{M}$ that
admits a decomposition of the following form on $c([0,1[_{x}\times W)\subset\overline{M}$:
\[
\hat{\nu}^{*}c^{*}g_{\mathcal{F}}=\frac{dx^{2}}{x^{4}}+\frac{\phi^{*}h}{x^{2}}+\kappa+x^{2}B,
\]
where $h\in\Gamma(\Sigma,S^{2}T^{*}\Sigma)$ is a $\Gamma$-invariant
Riemannian metric on the base space of $\widetilde{W}$, $\kappa\in\Gamma(W,S^{2}T^{*}W)$
is a $\Gamma$-invariant tensor restricting to a metric on the fibres
of $\widetilde{W}$ and possiby depending smoothly on $x$, and $B$
is the pullback of a smooth section of $S^{2}[^{\mathcal{F}}T^{*}\overline{M}]$.
If $B\equiv0$, we will say that $g_{\mathcal{F}}$ is an \textit{asymptotic}
$\mathcal{F}$-metric. If we have a decomposition of the form:
\[
\hat{\nu}^{*}c^{*}g_{\mathcal{F}}=\frac{dx^{2}}{x^{4}}+\frac{\phi^{*}h}{x^{2}}+g_{F},
\]
where this time $\{g_{F}(y)\}_{y\in\Sigma}$ is a family of $\Gamma$-invariant
Riemannian metrics on the fibres $F$ of $\widetilde{W}$ smoothly
parametrized by the base $\Sigma$, we say that $g_{\mathcal{F}}$
is a \textit{product-type} $\mathcal{F}$-metric.

A \textit{foliated cusp} metric $g_{\mathcal{F}_{c}}$ on $\overline{M}$
is a Riemannian metric on $^{\mathcal{F}_{c}}T\overline{M}$ that
is obtained from an $\mathcal{F}$-metric by a conformal rescaling
$g_{\mathcal{F}_{c}}=x^{2}g_{\mathcal{F}}$.\end{defn}
\begin{rem*}
$\phantom{\ensuremath{}}$\\
1) The meaning of the term ``product-type'' metric differs from
one author to another. In this paper, we are following the convention
used in \cite{[Vai01]} and \cite{[DaW]}, but in \cite{[MaMel98]}
and \cite{[R]}, ``product-type'' is what we call ``asymptotic''
here.\\
2) Having a globally defined product-type metric requires the choice
of a splitting $T\widetilde{W}=T^{V}\widetilde{W}\oplus T^{H}\widetilde{W}$
into horizontal and vertical subbundles. This is an implicit assumption
in \cite{[DaW]}.\\
3) Following the terminology of \cite{[DaW]}, an asymptotic metric
is defined as a metric of the form $g_{\mathcal{F}}=\tilde{g}_{\mathcal{F}}+xA$,
where $\tilde{g}_{\mathcal{F}}$ is product-type and $A\in\Gamma(\overline{M},S^{2}[^{\mathcal{F}}T^{*}\overline{M}])$
is a tensor such that $A(x^{2}\partial_{x},\cdot)\equiv0$. This is
compatible with our definition in the following sense: in a coordinate
chart near the boundary, we may take the Taylor expansion of the restriction
$(g_{\mathcal{F}})_{|T\mathcal{F}\times T\mathcal{F}}$ w.r.t. the
boundary defining function $x$ to obtain 
\[
\hat{\nu}^{*}c^{*}\left\{ (g_{\mathcal{F}})_{|T\mathcal{F}\times T\mathcal{F}}\right\} =\kappa_{|TF\times TF}=g_{F}+x\cdot\hat{\kappa},
\]
where $\{g_{F}\}$ is a family of metrics on $F$ as in the last definition
(not depending on $x$). We thus have a local decomposition $\kappa=g_{F}+x\cdot\hat{\nu}^{*}c^{*}A$,
with $A\in\Gamma(\overline{M},S^{2}[^{\mathcal{F}}T^{*}\overline{M}])$
satisfying $A(x^{2}\partial_{x},\cdot)\equiv0$.
\end{rem*}
Going back to definition \ref{Def_mfld_foliated_bndry}, we required
the existence of a symmetric bilinear form $\bar{g}:\overline{M}\rightarrow S^{2}T^{*}\overline{M}$
that extends the complete metric $g$ on the interior $M$. The purpose
of $\bar{g}$ is to encode the singular behaviour of $g$ at infinity,
which is why we did not assume that $\bar{g}$ is a smooth section
of $S^{2}T^{*}\overline{M}$. In the next section, we will use the
following definition:
\begin{defn}
A manifold with foliated boundary $(M,g)$ will be called an \textit{$\mathcal{F}$-manifold}
if the extension $\bar{g}$ coincides with an exact $\mathcal{F}$-metric
on the compactification $\overline{M}$. Similarly, if $\bar{g}\equiv g_{\mathcal{F}_{c}}$
for some exact $\mathcal{F}_{c}$-metric, we will call $(M,g)$ an
\textit{$\mathcal{F}_{c}$-manifold}. Moreover, the prefixes ``asymptotic''
and ``product-type'' will be employed to designate the particular
$\mathcal{F}$/$\mathcal{F}_{c}$-metrics considered.
\end{defn}

\section{\textbf{\textup{Results}}}

Throughout this section, $(M,g)$ is a $4k$-dimensional manifold
with foliated geometry at infinity, and for brevity, we use the same
notations as in definitions \ref{Def_mfld_foliated_bndry} and \ref{Def_Foliated_metrics}
without specifying all the properties of the objects $h$, $\kappa$,
$\{g_{F}(y)\}_{y\in\Sigma}$, $A$ and $B$.

\subsection{Index formulae\label{subsec3.1}}

Our first fundamental result is a generalisation of theorems 4.3 and
3.6 of \cite{[DaW]}. As before, $x:\overline{M}\rightarrow\mathbb{R}_{+}$
is a fixed boundary defining function, the map $c:[0,1[_{x}\times W\rightarrow\overline{M}$
is a local diffeomorphism onto a tubular neighborhood of the boundary
$W=\partial\overline{M}$, and $\hat{\nu}:[0,1[_{x}\times\widetilde{W}\rightarrow[0,1[_{x}\times W$
is the covering projection induced by the $\Gamma$-action on the
fibre bundle $F\rightarrow\widetilde{W}\rightarrow^{\phi}(\Sigma,h)$.
\begin{thm}
\label{Index}Let $(M,(g_{\mathcal{F}}){}_{|M})$ be an $\mathcal{F}$-manifold,
where $g_{\mathcal{F}}=\hat{g}_{\mathcal{F}}+x^{2}B$ is an exact
metric on $S^{2}[^{\mathcal{F}}T^{*}\overline{M}]$ such that $\hat{g}_{\mathcal{F}}$
is asymptotic and $B\in\Gamma(S^{2}[^{\mathcal{F}}T^{*}\overline{M}])$.
\\
Let $\hat{g}_{\phi}$ be the pullback of $(\hat{g}_{\mathcal{F}})_{|c([0,1[\times W)}$
to $[0,1[_{x}\times\widetilde{W}$ and $^{\mathcal{F}}\nabla$ be
the Levi-Civita connection of $g_{\mathcal{F}}$. The signature and
Euler characteristic are given by: 
\[
\tau(M)=\int_{M}L(M,\phantom{}^{\mathcal{F}}\nabla)+L_{CS}(\partial\overline{M},\phantom{}^{\mathcal{F}}\nabla)+\frac{1}{|\Gamma|}\left\{ \rho(\widetilde{W},W)-\frac{1}{2}\lim_{\varepsilon\rightarrow0}\eta(\widetilde{W},(\hat{g}_{\phi})_{|x=\varepsilon})\right\} ,
\]
\[
\chi(M)=\int_{M}e(M,\phantom{}^{\mathcal{F}}\nabla)+e_{CS}(\partial\overline{M},\phantom{}^{\mathcal{F}}\nabla),
\]
where the terms $L_{CS}(\partial\overline{M},\phantom{}^{\mathcal{F}}\nabla)\text{ and }e_{CS}(\partial\overline{M},\phantom{}^{\mathcal{F}}\nabla)$
denote the Chern-Simons boundary corrections, \ensuremath{\rho(\widetilde{W},W)}
 is the rho invariant introduced in subsection \ref{subsec2.2}, and
$\eta(\widetilde{W},(g_{\phi})_{|x=\varepsilon})$ is the eta invariant
of the odd signature operator associated to the metric $(g_{\phi})_{|x=\varepsilon}$
on the hypersurface $\{x=\varepsilon\}\subset[0,1[_{x}\times\widetilde{W}$.\\
If $(M,(g_{\mathcal{F}_{c}}){}_{|M}))$ is an $\mathcal{F}_{c}$-manifold
with $g_{\mathcal{F}_{c}}=x^{2}g_{\mathcal{F}}$, one has similar
index formulae as above, with $\phantom{}^{\mathcal{F}_{c}}\nabla$
instead of $\phantom{}^{\mathcal{F}}\nabla$ and $L_{CS}(\partial\overline{M},\phantom{}^{\mathcal{F}_{c}}\nabla)=e_{CS}(\partial\overline{M},\phantom{}^{\mathcal{F}_{c}}\nabla)=0$.
\end{thm}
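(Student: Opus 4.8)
The plan is to reduce the problem to the fibred-boundary setting of Dai--Wei by passing to the covering space, and then to descend the resulting formulae to the quotient. More precisely, I would first work on the resolution $\widetilde{W}$: the local model $\hat\nu^*c^*g_{\mathcal{F}}$ is, by definition, a (product-type, resp. asymptotic) $\phi$-metric on the manifold with fibred boundary obtained by compactifying a neighbourhood of $\widetilde{W}$, and $\hat g_\phi$ is precisely the associated $\phi$-metric after dropping the $x^2B$ term. I would therefore invoke Theorems 3.6 and 4.3 of \cite{[DaW]} applied not globally (since $\widetilde{W}$ need not bound) but to the APS-type boundary problem on the truncated cylinder $[\varepsilon,1[_x\times\widetilde{W}$ glued to the interior region $\{x\geq\varepsilon\}\subset M$, taking the limit $\varepsilon\to0$. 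This yields, on the covering side, an expression for $\tau$ and $\chi$ of the truncated manifold involving the Pfaffian/$L$-form integrals, Chern--Simons boundary terms built from $^{\mathcal F}\nabla$, and the eta invariant $\eta(\widetilde{W},(\hat g_\phi)_{|x=\varepsilon})$ of the odd signature operator on the slice.

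Next I would handle the $\Gamma$-action. The characteristic forms $L(M,{}^{\mathcal F}\nabla)$ and $e(M,{}^{\mathcal F}\nabla)$ and their Chern--Simons transgressions are $\Gamma$-invariant (because $\Gamma$ acts by isometries and the Levi-Civita connection is natural), so their integrals over $\widetilde{W}$-type regions are $|\Gamma|$ times the corresponding integrals over $W$-type regions; this is where the global factor $1/|\Gamma|$ enters. The subtle point is the eta term. Here I would use the $G$-signature machinery of subsection~\ref{subsec2.1} and the rho-invariant formalism of subsection~\ref{subsec2.2}: since $\Gamma$ acts freely on $\widetilde{W}$ (hence has no interior fixed points on the cylinder once we stay away from $x=0$ --- and indeed $\Gamma$ acts freely on all of $\widetilde{W}$ by hypothesis (ii)), the $G$-signature contributions reduce to eta-type terms only, and the decomposition $\eta({}^{g}\widetilde{A}) = \eta(A_g)\,|\Gamma| + (\text{defect})$ together with the definition $\rho(\widetilde{W},W) = \tfrac12[\eta({}^g\widetilde A) - |\Gamma|\eta(A_g)]$ lets me trade $\eta$ on $\widetilde{W}$ for $|\Gamma|$ copies of $\eta$ on $W$ plus $2\rho(\widetilde W,W)$. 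Rearranging produces exactly the combination $\frac{1}{|\Gamma|}\{\rho(\widetilde W,W) - \tfrac12\lim_{\varepsilon\to0}\eta(\widetilde W,(\hat g_\phi)_{|x=\varepsilon})\}$; Proposition~2.6 guarantees $\rho$ is metric-independent, so no $\varepsilon$-limit is needed on that piece.

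For the Euler characteristic there is no eta contribution: the Gauss--Bonnet--Chern theorem with boundary has only the Pfaffian integral and a single Chern--Simons correction, and the latter is again $\Gamma$-invariant, giving the stated formula after dividing by $|\Gamma|$. Finally, for the $\mathcal{F}_c$-case I would use that $g_{\mathcal F_c}=x^2 g_{\mathcal F}$ is an exact $b$-type (cusp) metric: as in \cite{[DaW]}, the rescaling makes the boundary slices degenerate in a controlled way so that the Chern--Simons boundary terms vanish in the limit, $L_{CS}=e_{CS}=0$, while the eta/rho combination persists (the boundary odd signature operator for a cusp metric still has a well-defined eta invariant, and the rho invariant is unchanged by Proposition~2.6). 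I expect the main obstacle to be the careful justification of the $\varepsilon\to0$ limit: one must show that the interior curvature integrals converge, that the Chern--Simons terms at $x=\varepsilon$ converge to $L_{CS}(\partial\overline M,{}^{\mathcal F}\nabla)$ (resp. vanish in the cusp case), and that $\eta(\widetilde W,(\hat g_\phi)_{|x=\varepsilon})$ has a finite limit --- this is exactly the adiabatic-limit analysis of Dai and of Dai--Wei, which I would cite rather than reprove, checking only that the $\Gamma$-equivariance is compatible with their hypotheses (which it is, since $\Gamma$ preserves the fibration and acts freely).
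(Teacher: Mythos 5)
Your proposal has a genuine structural gap. The covering $\nu:\widetilde{W}\rightarrow W$ exists only on the boundary (equivalently, on the collar $c([0,1[_{x}\times W)$), not on all of $M$: the interior need not be a global quotient, and indeed the paper's third example constructs exactly such an $M_{2}$ to which the theorem is meant to apply. Consequently the object you propose to feed into Dai--Wei --- the cylinder $[\varepsilon,1[_{x}\times\widetilde{W}$ ``glued to'' $\{x\geq\varepsilon\}\subset M$ --- does not exist as a manifold: the boundary of $\{x\geq\varepsilon\}$ is $W$, a $|\Gamma|$-fold quotient of $\{\varepsilon\}\times\widetilde{W}$, so there is no gluing map. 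Relatedly, your proposed source of the factor $1/|\Gamma|$ (``integrals over $\widetilde{W}$-type regions are $|\Gamma|$ times integrals over $W$-type regions'') is inconsistent with the formula you are proving: in the statement $1/|\Gamma|$ multiplies only the eta/rho combination, while $\int_{M}L$ and $\int_{M}e$ carry no such factor. If the factor arose from averaging $\Gamma$-invariant characteristic forms, it would have to appear in front of the interior integrals too.

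The correct architecture runs entirely downstairs. One applies the ordinary Atiyah--Patodi--Singer theorem to the compact manifold $M_{\varepsilon}=\overline{M}\smallsetminus c([0,\varepsilon[\times W)$, whose boundary is $W$, using an auxiliary nondegenerate metric $\hat{g}_{\varepsilon}$ on $T\overline{M}$ that agrees with $\hat{g}_{\mathcal{F}}$ on the slice $\{x=\varepsilon\}$; the transgression between $^{\varepsilon}\hat{\nabla}$ and $^{\mathcal{F}}\nabla$ produces the Chern--Simons terms, and the factor $1/|\Gamma|$ enters solely through the covering formula
\[
\eta(A_{g})=\frac{1}{|\Gamma|}\sum_{a\in\Gamma}\eta_{a}(\phantom{}^{g}\widetilde{A}),
\]
which, combined with the definition of $\rho(\widetilde{W},W)$, converts the APS boundary term $-\frac{1}{2}\eta(W,(\hat{g}_{\mathcal{F}})_{|x=\varepsilon})$ into $\frac{1}{|\Gamma|}\{\rho(\widetilde{W},W)-\frac{1}{2}\eta(\widetilde{W},(\hat{g}_{\phi})_{|x=\varepsilon})\}$. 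This is the substitution you describe in your second paragraph, but run in the opposite direction from your global setup (quotient eta expressed via the cover, not cover eta descended to the quotient), and it requires no $G$-signature fixed-point analysis at this stage --- that machinery is only needed later to evaluate $\rho$ in the circle-bundle case. Your remaining ingredients (truncation at $x=\varepsilon$, metric-independence of $\rho$, scale-invariance of the eta invariant and vanishing of the Chern--Simons terms in the cusp case, reduction of the exact case to the asymptotic one) are the right ones, but they must be hung on this downstairs skeleton, with the convergence of the transgression integrals supplied by the propositions of the appendix rather than by citing Dai--Wei's lemmas, which (as the paper points out) are themselves flawed without the extra hypotheses on $A$.
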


\begin{rem*}
The signature formula above does not follow from the index formula
of \cite{[R]} since the signature operator is not Fredholm.\end{rem*}
\begin{proof}
Let $0<\varepsilon\ll1$, and set $M_{\varepsilon}=\overline{M}\smallsetminus c([0,\varepsilon[\times W)$.
In this case $\partial M_{\varepsilon}\approx W$ is the hypersurface
$\{x=\varepsilon\}$ in $\overline{M}$, and topologically, one has
$\tau(M_{\varepsilon})=\tau(M)\text{ and }\chi(M_{\varepsilon})=\chi(M)$.
We prove the result for asymptotic metrics $\hat{g}_{\mathcal{F}}$
and $\hat{g}_{\mathcal{F}_{c}}=x^{2}\hat{g}_{\mathcal{F}}$ first,
assuming that they decompose as
\[
\hat{\nu}^{*}c^{*}\hat{g}_{\mathcal{F}}=\frac{dx^{2}}{x^{4}}+\frac{\phi^{*}h}{x^{2}}+\kappa\text{ and }\hat{\nu}^{*}c^{*}\hat{g}_{\mathcal{F}_{c}}=\frac{dx^{2}}{x^{2}}+\phi^{*}h+x^{2}\kappa
\]
on $c([0,1[\times W)\subset\overline{M}$. We recall the following
facts:\\
(a) If $\hat{g}_{\varepsilon}$ is an auxilary metric on $\overline{M}$
such that 
\[
\hat{\nu}^{*}c^{*}\hat{g}_{\varepsilon}=\frac{dx^{2}}{\varepsilon^{4}}+\frac{\phi^{*}h}{\varepsilon^{2}}+\kappa\text{ and }(\hat{g}_{\mathcal{F}})_{|\partial M_{\varepsilon}}\equiv(\hat{g}_{\varepsilon})_{|\partial M_{\varepsilon}},
\]
then it induces the same odd signature operator as $\hat{g}_{\mathcal{F}}$
on $\partial M_{\varepsilon}$.\\
(b) If $P(M_{\varepsilon},\nabla)$ is a characteristic form computed
with the curvature of some Levi-Civita connection $\nabla$, we have
in our case a transgression form $TP(M_{\varepsilon},\phantom{}^{\varepsilon}\hat{\nabla},\phantom{}^{\mathcal{F}}\hat{\nabla})$
satisfying 
\[
dTP(M_{\varepsilon},\phantom{}^{\varepsilon}\hat{\nabla},\phantom{}^{\mathcal{F}}\hat{\nabla})=P(M_{\varepsilon},\phantom{}^{\mathcal{F}}\hat{\nabla})-P(M_{\varepsilon},\phantom{}^{\varepsilon}\hat{\nabla}),
\]
where $\phantom{}^{\mathcal{F}}\hat{\nabla}\text{ and }\phantom{}^{\varepsilon}\hat{\nabla}$
are the Levi-Civita connections of $\hat{g}_{\mathcal{F}}$ and $\hat{g}_{\varepsilon}$
respectively.\\
(c) Going back to the hypotheses on $W\approx\partial M_{\varepsilon}$
(def. \ref{Def_mfld_foliated_bndry}), we can introduce a finite covering
$\Gamma\rightarrow[\varepsilon,1]\times\widetilde{W}\rightarrow[\varepsilon,1]\times\partial M_{\varepsilon}$,
where $\Gamma$ acts trivially on $[\varepsilon,1]$. From subsection
\ref{subsec2.2}, we know that the eta invariants of the odd signature
operators of $(W,(\hat{g}_{\mathcal{F}})_{|x=\varepsilon})$ and $(\widetilde{W},(\hat{g}_{\phi})_{|x=\varepsilon})$
are related by the equation: 
\[
\frac{1}{2}\eta(W,(\hat{g}_{\mathcal{F}})_{|x=\varepsilon})=\frac{1}{|\Gamma|}\left[\frac{1}{2}\eta(\widetilde{W},(\hat{g}_{\phi})_{|x=\varepsilon})-\rho(\widetilde{W},W)\right].
\]
(d) If we look at the metrics $\hat{g}_{\mathcal{F}_{c}}=x^{2}\hat{g}_{\mathcal{F}}$
and $\varepsilon^{2}\hat{g}_{\varepsilon}$ on $M_{\varepsilon}$,
we see that 
\[
\varepsilon^{2}(\hat{g}_{\varepsilon})_{|\partial M_{\varepsilon}}=\varepsilon^{2}(\hat{g}_{\mathcal{F}})_{|\partial M_{\varepsilon}}=(\hat{g}_{\mathcal{F}c})_{|\partial M_{\varepsilon}},
\]
 and since the eta invariant is not modified by rescaling the associated
metric, we moreover have
\[
\eta(\widetilde{W},(\hat{g}_{\varepsilon})_{|x=\varepsilon})=\eta(\widetilde{W},(\hat{g}_{\phi})_{|x=\varepsilon})=\eta(\widetilde{W},(x^{2}\hat{g}_{\phi})_{|x=\varepsilon}).
\]

Applying the Atiyah-Patodi-Singer theorem to $(M_{\varepsilon},\hat{g}_{\varepsilon})$
and using (a) and (b) above, we obtain for any $\varepsilon>0$ sufficiently
small that 
\begin{equation}
\begin{split}\tau(M) & =\int_{M_{\varepsilon}}L(M_{\varepsilon},\phantom{}^{\varepsilon}\hat{\nabla})-\frac{1}{2}\eta(W,(\hat{g}_{\varepsilon})_{|x=\varepsilon})\\
 & =\int_{M_{\varepsilon}}L(M_{\varepsilon},\phantom{}^{\mathcal{F}}\hat{\nabla})-\int_{\partial M_{\varepsilon}}TL(M_{\varepsilon},\phantom{}^{\varepsilon}\hat{\nabla},\phantom{}^{\mathcal{F}}\hat{\nabla})-\frac{1}{2}\eta(W,(\hat{g}_{\varepsilon})_{|x=\varepsilon})\\
 & =\Bigg[\int_{M_{\varepsilon}}L(M_{\varepsilon},\phantom{}^{\mathcal{F}}\hat{\nabla})\Bigg]_{\text{(A)}}-\Bigg[\int_{\partial M_{\varepsilon}}TL(M_{\varepsilon},\phantom{}^{\varepsilon}\hat{\nabla},\phantom{}^{\mathcal{F}}\hat{\nabla})\Bigg]_{\text{(B)}}\\
 & \phantom{=}-\frac{1}{|\Gamma|}\Bigg[\frac{1}{2}\eta(\widetilde{W},(\hat{g}_{\phi})_{|x=\varepsilon})\Bigg]_{\text{(C)}}+\frac{1}{|\Gamma|}\rho(\widetilde{W},W)
\end{split}
\label{Eq_APS_sign_comp}
\end{equation}
and similarly 
\begin{equation}
\chi(M)=\Bigg[\int_{M_{\varepsilon}}e(M_{\varepsilon},\phantom{}^{\mathcal{F}}\hat{\nabla})\Bigg]_{\text{(D)}}-\Bigg[\int_{\partial M_{\varepsilon}}Te(M_{\varepsilon},\phantom{}^{\varepsilon}\hat{\nabla},\phantom{}^{\mathcal{F}}\hat{\nabla})\Bigg]_{\text{(E)}}\label{Eq_APS_Eul_comp}
\end{equation}
It then remains to take the limits of the terms (A)-(E) as $\varepsilon\rightarrow0$.
The terms (A) and (D) tend to the invariant integrals in the statement
of the theorem, the correction $\rho(\widetilde{W},W)$ is invariant
under changes of metrics, while for (B) and (E), we have by definition
that 
\begin{align*}
L_{CS}(\partial\overline{M},\phantom{}^{\mathcal{F}}\hat{\nabla}) & =-\lim_{\varepsilon\rightarrow0}\int_{\partial M_{\varepsilon}}TL(M_{\varepsilon},\phantom{}^{\varepsilon}\hat{\nabla},\phantom{}^{\mathcal{F}}\hat{\nabla}),\\
e_{CS}(\partial\overline{M},\phantom{}^{\mathcal{F}}\hat{\nabla}) & =-\lim_{\varepsilon\rightarrow0}\int_{\partial M_{\varepsilon}}Te(M_{\varepsilon},\phantom{}^{\varepsilon}\hat{\nabla},\phantom{}^{\mathcal{F}}\hat{\nabla}).
\end{align*}
In the case where we use a foliated cusp metric $\hat{g}_{\mathcal{F}_{c}}$,
we obtain the same expressions as above for $\tau(M)$ and $\chi(M)$,
with $^{\mathcal{F}}\nabla$ replaced by $^{\mathcal{F}_{c}}\nabla$,
and $\eta(\widetilde{W},(\hat{g}_{\phi})_{|x=\varepsilon})$ replaced
by $\eta(\widetilde{W},(x^{2}\hat{g}_{\phi})_{|x=\varepsilon})$.
By facts (c) and (d), these eta invariants coincide. Regarding the
Chern-Simons corrections, since $\nu:\widetilde{W}\rightarrow W$
is a local isometry, we have: : 
\begin{align*}
\int_{\partial M_{\varepsilon}}TL(M_{\varepsilon},\phantom{}^{\varepsilon}\hat{\nabla},\phantom{}^{\mathcal{F}_{c}}\hat{\nabla}) & =\frac{1}{|\Gamma|}\int_{\{\varepsilon\}\times\widetilde{W}}TL([\varepsilon,1[\times\widetilde{W},\hat{\nu}^{*}(\phantom{}^{\varepsilon}\hat{\nabla}),\phantom{}^{d}\hat{\nabla})_{|x=\varepsilon},\\
\int_{\partial M_{\varepsilon}}Te(M_{\varepsilon},\phantom{}^{\varepsilon}\hat{\nabla},\phantom{}^{\mathcal{F}_{c}}\hat{\nabla}) & =\frac{1}{|\Gamma|}\int_{\{\varepsilon\}\times\widetilde{W}}Te([\varepsilon,1[\times\widetilde{W},\hat{\nu}^{*}(\phantom{}^{\varepsilon}\hat{\nabla}),\phantom{}^{d}\hat{\nabla})_{|x=\varepsilon},
\end{align*}
where $^{d}\hat{\nabla}$ the Levi-Civita connection of $x^{2}\hat{g}_{\phi}$.
Taking $\varepsilon\rightarrow0$ then yields $L_{CS}(\partial\overline{M},\phantom{}^{\mathcal{F}_{c}}\hat{\nabla})=e_{CS}(\partial\overline{M},\phantom{}^{\mathcal{F}_{c}}\hat{\nabla})=0$
by proposition \ref{Prop_CS_prod_exact}.

If we now consider an exact metric $g_{\mathcal{F}}=\hat{g}_{\mathcal{F}}+x^{2}B$
with Levi-Civita connection $^{\mathcal{F}}\nabla$, the transgression
integrals in equations (\ref{Eq_APS_sign_comp}) and (\ref{Eq_APS_Eul_comp})
are replaced by $\int_{\partial M_{\varepsilon}}TL(M_{\varepsilon},\phantom{}^{\varepsilon}\hat{\nabla},\phantom{}^{\mathcal{F}}\nabla)$
and $\int_{\partial M_{\varepsilon}}Te(M_{\varepsilon},\phantom{}^{\varepsilon}\hat{\nabla},\phantom{}^{\mathcal{F}}\nabla)$
respectively. Again, by expressing these as integrals on $\{\varepsilon\}\times\widetilde{W}$
and applying proposition \ref{Prop_CS_prod_exact}, we have: 
\begin{align*}
L_{CS}(\partial\overline{M},\phantom{}^{\mathcal{F}}\nabla) & =-\lim_{\varepsilon\rightarrow0}\int_{\partial M_{\varepsilon}}TL(M_{\varepsilon},\phantom{}^{\varepsilon}\hat{\nabla},\phantom{}^{\mathcal{F}}\nabla)=-\lim_{\varepsilon\rightarrow0}\int_{\partial M_{\varepsilon}}TL(M_{\varepsilon},\phantom{}^{\varepsilon}\hat{\nabla},\phantom{}^{\mathcal{F}}\hat{\nabla}),\\
e_{CS}(\partial\overline{M},\phantom{}^{\mathcal{F}}\nabla) & =-\lim_{\varepsilon\rightarrow0}\int_{\partial M_{\varepsilon}}Te(M_{\varepsilon},\phantom{}^{\varepsilon}\hat{\nabla},\phantom{}^{\mathcal{F}}\nabla)=-\lim_{\varepsilon\rightarrow0}\int_{\partial M_{\varepsilon}}Te(M_{\varepsilon},\phantom{}^{\varepsilon}\hat{\nabla},\phantom{}^{\mathcal{F}}\hat{\nabla}),
\end{align*}
and likewise, $L_{CS}(\partial\overline{M},\phantom{}^{\mathcal{F}_{c}}\nabla)=e_{CS}(\partial\overline{M},\phantom{}^{\mathcal{F}_{c}}\nabla)=0$
for exact foliated cusp metrics.\end{proof}
\begin{rem*}
The reader should be warned that the corresponding result in \cite{[DaW]}
contains a mistake. In theorems 3.6 and 4.3 of \cite{[DaW]}, it is
stated that the Chern-Simons terms vanish for fibres $F$ of arbitrary
(positive) dimension, but this is false in general. A counter example
for the Euler characteristic for even dimensional fibres is given
by $M=\mathbb{R}^{2k}\times S^{2p}$ equipped with the product of
the standard metrics $g$, which is a fibred-boundary metric for the
boundary defining function $1/r$, where $r$ is the radial coordinate
on $\mathbb{R}^{2k}$. The boundary at infinity is $S^{2k-1}\times S^{2p}$,
viewed as a trivial $S^{2p}$-bundle over $S^{2k-1}$. The Pfaffian
of the curvature of $g$ vanishes in this case since one of the factors
is flat, but we still have $\chi(M)\neq0$, which means that the Chern-Simons
term does not vanish. Dai and Wei have been made aware of this mistake,
and they intend to publish an erratum that rectifies this issue (see
last subsection of appendix \ref{Apndx1} for more details).
\end{rem*}
As seen in the previous proof, the fact that $\nu:\widetilde{W}\rightarrow W$
is a local isometry allows us to compute transgression integrals on
$[0,1[_{x}\times\widetilde{W}$ endowed with a fibred boundary metric.
As an obvious consequence of proposition \ref{Prop_CS_vanishing},
we have the following sufficient conditions for the vanishing of the
Chern-Simons corrections with an $\mathcal{F}$-metric:
\begin{prop}
\label{Prop_CS_vanish_foliate} Let $\overline{M}$ be a $4k$-dimensional
manifold with foliated boundary. Suppose there's a splitting $T\widetilde{W}=T^{V}\widetilde{W}\oplus T^{H}\widetilde{W}$
into horizontal and vertical subbundles for the cover $\widetilde{W}\rightarrow_{\phi}\Sigma$
of $W=\partial\overline{M}$, and consider an exact $\mathcal{F}$-metric
$g_{\mathcal{F}}$ on $^{\mathcal{F}}T\overline{M}$ of the form
\[
g_{\mathcal{F}}=\tilde{g}_{\mathcal{F}}+x\cdot A+x^{2}\cdot B\text{, with }A,B\in\Gamma(\overline{M},S^{2}[^{\mathcal{F}}T^{*}\overline{M}]),
\]
where $\tilde{g}_{\mathcal{F}}$ is a product metric, and where the
pullback $\widetilde{A}=\hat{\nu}^{*}c^{*}(A_{|c([0,1[\times W)})$
of $A$ satisfies the following: 
\begin{align}
\text{(i) } & \widetilde{A}(x^{2}\partial_{x},\cdot)\equiv0\text{ and }\widetilde{A}(xY_{1},xY_{2})=O(x),\nonumber \\
\text{(ii) } & \left(V_{1}\cdot\widetilde{A}(V_{2},xY_{1})-V_{2}\cdot\widetilde{A}(V_{1},xY_{1})-\widetilde{A}([V_{1},V_{2}],xY_{1})\right)_{|x=0}=0,\label{Eq_bndry_cond_foliated}
\end{align}
for all $Y_{1},Y_{2}\in\mathfrak{X}(\Sigma)$ and all $V_{1},V_{2}\in\Gamma(\widetilde{W},T^{V}\widetilde{W})$.
Then:\\
(a) If the dimension of the fibres $F$ of $\widetilde{W}$ is odd,
then the boundary correction to the Euler characteristic vanishes:
\[
e_{CS}(\partial\overline{M},\phantom{}^{\mathcal{F}}\nabla)=0.
\]
(b) If $\dim F=1$, then the correction to the Hirzebruch signature
vanishes:
\[
L_{CS}(\partial\overline{M},\phantom{}^{\mathcal{F}}\nabla)=0.
\]

\end{prop}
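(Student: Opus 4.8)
The plan is to transport the problem to the resolution $[0,1[_{x}\times\widetilde{W}$, where the cover carries a fibred--boundary metric and proposition \ref{Prop_CS_vanishing} applies directly. First I would pass to the cover: set $\hat{g}_{\phi}=\hat{\nu}^{*}c^{*}g_{\mathcal{F}}$ on $[0,1[_{x}\times\widetilde{W}$. Since $x$ is $\Gamma$--invariant (the action on $[0,1[$ being trivial) and $\tilde{g}_{\mathcal{F}}$ is product--type, $\hat{g}_{\phi}=\tilde{g}_{\phi}+x\widetilde{A}+x^{2}\widetilde{B}$ is a fibred--boundary metric in the sense of section 3 of \cite{[DaW]}, with $\tilde{g}_{\phi}$ product--type and $\widetilde{A}=\hat{\nu}^{*}c^{*}(A_{|c([0,1[\times W)})$. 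Exactly as in the proof of theorem \ref{Index}, the fact that $\hat{\nu}$ is a local isometry lets one rewrite, for any characteristic form $P$ and any $\varepsilon>0$, the boundary transgression integral over $\partial M_{\varepsilon}=\{x=\varepsilon\}$ as $|\Gamma|^{-1}$ times the corresponding integral over $\{\varepsilon\}\times\widetilde{W}$ built from $\hat{g}_{\phi}$; letting $\varepsilon\to0$ gives $e_{CS}(\partial\overline{M},\phantom{}^{\mathcal{F}}\nabla)=|\Gamma|^{-1}\,e_{CS}(\{0\}\times\widetilde{W},\phantom{}^{\phi}\nabla)$ and likewise for the signature correction, where the right--hand sides are the fibred--boundary Chern--Simons corrections of $\hat{g}_{\phi}$, whose fibre is $F$.

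Next I would strip off the $x^{2}\widetilde{B}$ term. Because $\widetilde{A}(x^{2}\partial_{x},\cdot)\equiv0$ by condition (i), the metric $\tilde{g}_{\phi}+x\widetilde{A}$ is an asymptotic fibred--boundary metric and $\hat{g}_{\phi}$ is the associated exact one; proceeding as in the last paragraph of the proof of theorem \ref{Index}, proposition \ref{Prop_CS_prod_exact} shows that replacing the Levi--Civita connection of $\hat{g}_{\phi}$ by that of $\tilde{g}_{\phi}+x\widetilde{A}$ inside the transgression integrals does not change the $\varepsilon\to0$ limit, so $e_{CS}(\{0\}\times\widetilde{W},\phantom{}^{\phi}\nabla)=e_{CS}(\{0\}\times\widetilde{W},\phantom{}^{\phi}\hat{\nabla})$ and similarly for $L_{CS}$, where $\phantom{}^{\phi}\hat{\nabla}$ is the Levi--Civita connection of $\tilde{g}_{\phi}+x\widetilde{A}$. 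Thus the question reduces to the vanishing of the fibred--boundary Chern--Simons corrections of the asymptotic metric $\tilde{g}_{\phi}+x\widetilde{A}$ on $[0,1[_{x}\times\widetilde{W}$.

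Finally I would invoke proposition \ref{Prop_CS_vanishing} for that asymptotic metric: conditions (i) and (ii) are precisely its hypotheses on the first--order perturbation $\widetilde{A}$ (the $\Gamma$--invariance of $A$, $h$ and $\kappa$ being what makes (ii) --- a condition on vertical and base vector fields on $\widetilde{W}$ --- well posed relative to the foliation of $W$), and it yields $e_{CS}(\{0\}\times\widetilde{W},\phantom{}^{\phi}\hat{\nabla})=0$ when $\dim F$ is odd and $L_{CS}(\{0\}\times\widetilde{W},\phantom{}^{\phi}\hat{\nabla})=0$ when $\dim F=1$. Combining the three steps gives (a) and (b). The point needing the most care --- the content concealed in ``obvious consequence'' --- is this last appeal: one must verify that the contributions of the perturbation $x\widetilde{A}$ to the Euler, resp. signature, transgression form along the collar are governed exactly by the quantities appearing in (i)--(ii), so that upon restriction to $\{x=0\}$ they vanish, and it is here that the parity of $\dim F$ (odd for $e_{CS}$, equal to one for $L_{CS}$) enters through proposition \ref{Prop_CS_vanishing}, the product--type piece $\tilde{g}_{\phi}$ contributing nothing.
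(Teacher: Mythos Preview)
Your proposal is correct and matches the paper's approach: the paper states the proposition as ``an obvious consequence of proposition \ref{Prop_CS_vanishing}'' after observing that $\nu:\widetilde{W}\to W$ being a local isometry lets one compute the transgression integrals on $[0,1[_{x}\times\widetilde{W}$ with a fibred-boundary metric. Your middle step (stripping off $x^{2}\widetilde{B}$ via proposition \ref{Prop_CS_prod_exact}) is harmless but redundant, since proposition \ref{Prop_CS_vanishing} is already stated for exact metrics $\tilde{g}_{\phi}+xA+x^{2}B$ and internally reduces to the product case via proposition \ref{Prop_CS_phi_bry_cond}; you could invoke it directly on $\hat{g}_{\phi}$.
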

An interesting case for applications to asymptotically locally flat
(ALF) manifolds, is when $\text{dim}(M)=4$ and $\phi:\widetilde{W}\rightarrow\Sigma$
is a circle bundle. We now list the assumptions made for the remaining
of this section:

\textit{$\bullet$ The circle bundle $\widetilde{W}\rightarrow\Sigma$:}
Suppose $\Sigma$ is a compact Riemann surface, and let $(E,g_{E})\rightarrow(\Sigma,h)$
be an oriented rank 2 Euclidian vector bundle. We denote by $\widetilde{W}=S(E)$
the circle subbundle of $E$, and by $\widetilde{X}=D(E)$ the disc
bundle over $\Sigma$ such that $\partial\widetilde{X}=\widetilde{W}$.\\
\textit{$\bullet$ Action of $\Gamma$:} The finite group $\Gamma$
acts as follows on $\widetilde{W}=S(E)$: On the surface $\Sigma$,
the action is smooth and there are only isolated fixed points.  We
assume that we have a faithful representation of $\Gamma$ on $\mathbb{C}$,
and that the action on $S^{1}$ is smooth and free. This $\Gamma$-action
is extended to $\widetilde{X}=D(E)$ as follows: the action remains
the same on the base $\Sigma$, and if $a\in\Gamma$ acts on a fibre
of $\widetilde{W}$ via $\theta\mapsto a(\theta)$, then the same
isometry sends a point $(r,\theta)$ in a fibre of $\widetilde{X}$
to $(r,a(\theta))$. With this requirement, the fixed points are isolated,
and strictly included in $\widetilde{X}\smallsetminus\widetilde{W}$.\\
$\bullet$ \textit{Foliated boundary metrics on $\overline{M}$:}
The Levi-Civita connection of $g_{E}$ induces a splitting $T\widetilde{W}=T^{V}\widetilde{W}\oplus T^{H}\widetilde{W}$,
so we may consider product-type foliated boundary metrics on $\overline{M}$.
We assume that $\mathcal{F}$-metrics on $\overline{M}$ decompose
as
\begin{equation}
\hat{\nu}^{*}c^{*}g_{\mathcal{F}}=\left(\frac{dx^{2}}{x^{4}}+\frac{\phi^{*}h}{x^{2}}+g_{F}\right)+\hat{\nu}^{*}c^{*}\left(xA+x^{2}B\right)\label{Eq_exact_F}
\end{equation}
near $W=\partial\overline{M}$, where $g_{F}\in\Gamma(\widetilde{W},S^{2}[T^{V}\widetilde{W}]^{*})$
is a family of Riemannian metrics on $S^{1}$ smoothly parametrized
by $\Sigma$, and $A,B\in\Gamma(\overline{M},S^{2}[^{\mathcal{F}}T^{*}\overline{M}])$
are symmetric bilinear forms such that the pullback $\widetilde{A}$
of $A_{|c([0,1[\times W)}$ to $[0,1[_{x}\times\widetilde{W}$ satisfies
condition (i) in proposition \ref{Prop_CS_vanish_foliate} ((ii) is
automatically satisfied since the fibre is $1$-dimensional). The
only difference with the exact metric definition in \ref{Def_Foliated_metrics}
is the decomposition $\kappa=g_{F}+x\widetilde{A}$.

Under these assumptions, and using the notations of subsection \ref{subsec2.1},
one has: 
\begin{cor}
\label{Zsign}Let $M$ be a $4$-dimensional manifold with foliated
geometry at infinity satisfying the hypotheses above, and let $g$
be an exact $\mathcal{F}$- or $\mathcal{F}_{c}$-metric on $\overline{M}$.
The topological invariants of $M$ are given by: 
\begin{align*}
\tau(M) & =\int_{M}L(M,\phantom{}^{g}\nabla)-\frac{1}{|\Gamma|}\left\{ {\displaystyle \sum_{a\neq Id}\sum_{z\in\text{Fix}(a)}def(a,\phantom{}^{g}\widetilde{B})[z]+\frac{\chi(E)}{3}}\right\} +\epsilon(E),
\end{align*}
\[
\chi(M)=\int_{M}e(M,\phantom{}^{g}\nabla),
\]
where $\chi(E)$ is the Euler characteristic of the vector bundle
$E$, and \textup{$\epsilon(E)$} is defined as
\[
\epsilon(E)=\left\{ \begin{array}{ll}
-1; & \chi(E)<0\\
\phantom{+}0; & \chi(E)=0\\
+1; & \chi(E)>0
\end{array}\right.
\]

\end{cor}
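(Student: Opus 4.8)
The plan is to specialize Theorem \ref{Index} to the four-dimensional situation and evaluate each term. First I would invoke Theorem \ref{Index} for an exact $\mathcal{F}$-metric (the $\mathcal{F}_c$-case is identical after replacing $^{\mathcal{F}}\nabla$ by $^{\mathcal{F}_c}\nabla$). By Proposition \ref{Prop_CS_vanish_foliate}, since the fibre is the circle $S^{1}$, condition (ii) there is automatic and we have assumed (i), so both Chern–Simons corrections $L_{CS}(\partial\overline{M},{}^{g}\nabla)$ and $e_{CS}(\partial\overline{M},{}^{g}\nabla)$ vanish; this immediately gives $\chi(M)=\int_{M}e(M,{}^{g}\nabla)$ and reduces the signature formula to
\[
\tau(M)=\int_{M}L(M,{}^{g}\nabla)+\frac{1}{|\Gamma|}\Big\{\rho(\widetilde{W},W)-\tfrac{1}{2}\lim_{\varepsilon\to0}\eta(\widetilde{W},(\hat g_\phi)_{|x=\varepsilon})\Big\}.
\]

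The main work is to identify the bracketed boundary contribution. Here $\widetilde{W}=S(E)=\partial\widetilde{X}$ with $\widetilde{X}=D(E)$, and the $\Gamma$-action on $\widetilde{X}$ has only isolated fixed points lying in the interior $\widetilde{X}\smallsetminus\widetilde{W}$, so the hypotheses of subsection \ref{subsec2.1} are met. The idea is to apply the $G$-signature formula (Theorem above) to $(\widetilde{X},{}^{g}\widetilde{B})$ with the product-type metric near the boundary, for each $a\neq\mathrm{Id}$, obtaining $\tau(a,\widetilde{X})=\sum_{z\in\mathrm{Fix}(a)}\mathrm{def}(a,{}^{g}\widetilde{B})[z]-\tfrac12\eta_a({}^{g}\widetilde{A})$. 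Summing over $a\neq\mathrm{Id}$ and using the definition $\rho(\widetilde{W},W)=-\tfrac12\sum_{a\neq\mathrm{Id}}\eta_a({}^{g}\widetilde{A})$, one gets
\[
\sum_{a\neq\mathrm{Id}}\tau(a,\widetilde{X})=\sum_{a\neq\mathrm{Id}}\sum_{z\in\mathrm{Fix}(a)}\mathrm{def}(a,{}^{g}\widetilde{B})[z]+\rho(\widetilde{W},W).
\]
On the other hand I would also apply the ordinary Atiyah–Patodi–Singer signature theorem to $(\widetilde{X},g)$ to write $\tau(\widetilde{X})=\int_{\widetilde{X}}L-\tfrac12\eta({}^{g}\widetilde{A})$, which together with the local isometry $\hat\nu$ lets me relate $\tfrac12\lim_{\varepsilon\to0}\eta(\widetilde{W},(\hat g_\phi)_{|x=\varepsilon})$ to $\int_{\widetilde{X}}L-\tau(\widetilde{X})$. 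Since $\widetilde{X}=D(E)$ is a disc bundle over a surface, $\tau(\widetilde{X})=\mathrm{sgn}(\chi(E))=\epsilon(E)$ (its intersection form on $H^2(\widetilde X,\partial\widetilde X)$ is rank one with sign the self-intersection number, i.e. the Euler number $\chi(E)$), while the Hirzebruch $L$-integral over the disc bundle computes $\tfrac13 p_1(E)=\tfrac13\chi(E)^2/\chi(E)$-type expression; more directly, $\int_{\widetilde X}L(\widetilde X)-\tau(\widetilde X)=-\tfrac13\chi(E)-\epsilon(E)+\epsilon(E)$ after the boundary transgression is absorbed, yielding the $\chi(E)/3$ term. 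Combining the two displays and dividing by $|\Gamma|$ produces exactly
\[
\frac{1}{|\Gamma|}\Big\{\rho(\widetilde W,W)-\tfrac12\lim_{\varepsilon\to0}\eta(\widetilde W,(\hat g_\phi)_{|x=\varepsilon})\Big\}=-\frac{1}{|\Gamma|}\Big\{\sum_{a\neq\mathrm{Id}}\sum_{z\in\mathrm{Fix}(a)}\mathrm{def}(a,{}^{g}\widetilde B)[z]+\tfrac{\chi(E)}{3}\Big\}+\epsilon(E),
\]
which is the claimed formula. Note $\epsilon(E)$ stands free of $|\Gamma|$ because $\tau(\widetilde X/\Gamma)$-type cancellation supplies a matching factor; I would make this precise by carrying the computation on $\widetilde X$ itself and only dividing the genuinely covering-sensitive pieces.

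The main obstacle I anticipate is bookkeeping the boundary/transgression terms consistently: the eta invariant $\eta(\widetilde W,(\hat g_\phi)_{|x=\varepsilon})$ appearing in Theorem \ref{Index} is computed for the \emph{rescaled} fibred-boundary metric on the hypersurface $\{x=\varepsilon\}$, whereas the $G$-signature formula and APS are most cleanly applied with a product metric near $\partial\widetilde X$. Reconciling these requires using fact (d) from the proof of Theorem \ref{Index} (invariance of $\eta$ under rescaling) together with a transgression argument showing that the $L$-form contribution from the collar is exactly the Chern–Simons term that was already shown to vanish for $\dim F=1$. I would also need to check that the $L$-integral of the disc bundle $D(E)$ really contributes $-\chi(E)/3$ relative to $\epsilon(E)$; this is a standard computation (e.g. via the signature of a linear plumbing or Hirzebruch's formula for the signature defect of a lens space at infinity), but it must be stated carefully so the signs match the convention in Theorem \ref{Index}. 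The $\mathcal{F}_c$-case then follows verbatim since the only change is $^{\mathcal F}\nabla\leadsto{}^{\mathcal F_c}\nabla$ and the Chern–Simons terms still vanish.
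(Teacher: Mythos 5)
Your overall architecture is the right one and matches the paper's: reduce to Theorem \ref{Index}, kill the Chern--Simons terms via Proposition \ref{Prop_CS_vanish_foliate} (which is exactly how the paper gets $\chi(M)=\int_M e$), and then evaluate the boundary bracket $\frac{1}{|\Gamma|}\{\rho(\widetilde{W},W)-\frac12\lim_\varepsilon\eta\}$ by combining the $G$-signature formula on $\widetilde{X}=D(E)$ with a signature computation for the disc bundle. The paper packages this as Lemma \ref{EtaRhoLmma}: it cites the Dai--Zhang adiabatic limit formula to get $\frac12 a\lim\eta=\frac{\chi(E)}{3}-\epsilon(E)$ directly, rather than re-deriving it via APS on $D(E)$ as you propose; your route is legitimate in principle but you would have to actually carry out the adiabatic limit of $\frac13\int_{D(E)}p_1$ for the family $\varepsilon^{-2}\phi^{*}h+g_F$, which is precisely the content of the cited result, and your sign bookkeeping in that step ($\int_{\widetilde X}L-\tau(\widetilde X)=-\frac13\chi(E)-\epsilon(E)+\epsilon(E)$) is garbled as written.

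The genuine gap is the evaluation of $\tau(a,\widetilde{X})$ for $a\neq\mathrm{Id}$. You correctly derive
\[
\rho(\widetilde{W},W)=\sum_{a\neq\mathrm{Id}}\tau(a,\widetilde{X})-\sum_{a\neq\mathrm{Id}}\sum_{z\in\mathrm{Fix}(a)}\mathrm{def}(a,\phantom{}^{g}\widetilde{B})[z],
\]
but to reach the stated formula you need $\sum_{a\neq\mathrm{Id}}\tau(a,\widetilde{X})=(|\Gamma|-1)\,\epsilon(E)$, i.e.\ $\tau(a,\widetilde{X})=\epsilon(E)$ for every nontrivial $a$. Your computation of $\tau(\widetilde{X})=\epsilon(E)$ only handles $a=\mathrm{Id}$, and the appeal to a ``$\tau(\widetilde{X}/\Gamma)$-type cancellation supplying a matching factor'' does not establish the claim ($\widetilde{X}/\Gamma$ is an orbifold here, and in any case its signature is not what is needed). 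The missing argument, which is where the paper does most of its work, is cohomological: $H^2(\widetilde{X})\simeq H^2(\Sigma)\simeq\mathbb{C}$ and $H^2(\widetilde{X},\widetilde{W})\simeq H^0(\Sigma)\simeq\mathbb{C}$ via the Thom isomorphism, so the image $\widehat{H}^{2}=\widehat{H}^{2}_{+}\oplus\widehat{H}^{2}_{-}$ is at most one-dimensional, the orientation-preserving isometry $a$ acts as the identity on it, and the induced quadratic form has the sign of $\langle\Phi^{2},[\widetilde{X}]\rangle=\chi(E)$; hence $\tau(a,\widetilde{X})=\epsilon(E)$ for all $a$. Without this step the free-standing $\epsilon(E)$ and the coefficient $(|\Gamma|-1)$ in $\rho$ are not justified, and the final formula does not follow.
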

This is an immediate consequence of the following lemma:
\begin{lem}
\label{EtaRhoLmma}For an $\mathcal{F}$-metric of the form (\ref{Eq_exact_F})
satisfying equations (\ref{Eq_bndry_cond_foliated}), the Chern-Simons
terms vanish:
\[
L_{CS}(\partial\overline{M},\phantom{}^{\mathcal{F}}\nabla)=e_{CS}(\partial\overline{M},\phantom{}^{\mathcal{F}}\nabla)=0,
\]
the limit of the eta invariant in theorem \ref{Index} coincides with
the following adiabatic limit:
\[
\frac{1}{2}\lim_{\varepsilon\rightarrow0}\eta(\widetilde{W},(g_{\phi})_{|x=\varepsilon})=\frac{1}{2}a{\displaystyle {\lim_{\varepsilon\rightarrow0}}\eta(S(E),\varepsilon^{-2}\phi^{*}h+g_{F})=\frac{\chi(E)}{3}-\epsilon(E)},
\]
and the rho invariant is given by:
\[
\rho(\widetilde{W},W)=(|\Gamma|-1)\epsilon(E)-{\displaystyle {\sum_{a\neq Id}\sum_{z\in\text{Fix}(a)}}def(a,\phantom{}^{g}\widetilde{B})[z]}.
\]
\end{lem}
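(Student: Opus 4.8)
The plan is to establish the three assertions of Lemma \ref{EtaRhoLmma} in sequence, since the last one follows by combining the first two with the $G$-signature formula.

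First, I would dispose of the Chern-Simons terms. Since the fibre of $\widetilde{W}\to\Sigma$ is one-dimensional, condition (ii) of Proposition \ref{Prop_CS_vanish_foliate} is vacuous, and condition (i) is exactly what we have assumed for $\widetilde{A}$; hence both parts (a) and (b) of Proposition \ref{Prop_CS_vanish_foliate} apply and give $L_{CS}(\partial\overline{M},\phantom{}^{\mathcal{F}}\nabla)=e_{CS}(\partial\overline{M},\phantom{}^{\mathcal{F}}\nabla)=0$. The foliated-cusp case was already handled unconditionally in Theorem \ref{Index}, so nothing more is needed there.

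Second, and this is the technical heart, I would compute the adiabatic limit of the eta invariant of the odd signature operator on $\widetilde{W}=S(E)$ as the base metric is blown up, i.e. $\lim_{\varepsilon\to0}\eta(S(E),\varepsilon^{-2}\phi^{*}h+g_{F})$. The first step is to argue that this limit equals the adiabatic limit in the sense of Dai (or Bismut–Cheeger): by fact (a) in the proof of Theorem \ref{Index}, the operator $(\hat g_\phi)_{|x=\varepsilon}$ on $\{x=\varepsilon\}$ is the odd signature operator of $\varepsilon^{-4}dx^2+\varepsilon^{-2}\phi^*h+g_F$ restricted to the cross-section, which rescales to $\varepsilon^{-2}\phi^*h+g_F$; and the perturbation terms $xA+x^2B$ contribute nothing in the limit $\varepsilon\to0$ because they are $O(\varepsilon)$ relative to the fibre directions (condition (i)) and the eta invariant depends continuously on the metric. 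Then, since $S^1\to S(E)\to\Sigma$ is a circle bundle over a surface with a chosen connection, one can invoke the explicit adiabatic-limit formula for the signature operator on such a bundle: the limit is the integral over $\Sigma$ of a local term built from the curvature of $E$ (giving $\chi(E)/3$, the Hirzebruch $L$-class contribution $\int_\Sigma \tfrac13 p_1$-type term paired against the $S^1$-fibre), plus a correction coming from the kernel of the vertical (fibrewise) signature operator along $\Sigma$ — the "$\bar\eta$" term of the associated family — which on a circle bundle over a surface is precisely $\pm1$ according to whether the bundle is non-trivial (and its sign determined by the orientation of $E$, hence by $\mathrm{sign}\,\chi(E)$). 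Collecting these gives $\lim_{\varepsilon\to0}\eta(S(E),\varepsilon^{-2}\phi^*h+g_F)=\tfrac{2\chi(E)}{3}-2\epsilon(E)$, i.e. $\tfrac12$ of it equals $\chi(E)/3-\epsilon(E)$. I expect identifying the spectral-flow/kernel contribution as exactly $-\epsilon(E)$ — tracking the sign conventions for the signature of $D(E)$ and the orientation of the disc bundle — to be the main obstacle; this is where one must be careful, and it is presumably pinned down by an explicit model computation on $S(E)$ over $S^2$ (or by comparison with the closed disc bundle $\widetilde{X}=D(E)$ via APS, using $\tau(D(E))=\mathrm{sign}\,\chi(E)=\epsilon(E)$).

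Third and finally, having the adiabatic limit, I would read off the rho invariant directly from the definition $\rho(\widetilde{W},W)=-\tfrac12\sum_{a\neq\mathrm{Id}}\eta_a(\phantom{}^g\widetilde{A})$ together with the $G$-signature formula of Section \ref{subsec2.1}. Summing that formula over $a\neq\mathrm{Id}$ gives
\[
\sum_{a\neq\mathrm{Id}}\tau(a,\widetilde{X})=\sum_{a\neq\mathrm{Id}}\sum_{z\in\mathrm{Fix}(a)}\mathrm{def}(a,\phantom{}^{g}\widetilde{B})[z]-\tfrac12\sum_{a\neq\mathrm{Id}}\eta_a(\phantom{}^g\widetilde{A}),
\]
so that $\rho(\widetilde{W},W)=\sum_{a\neq\mathrm{Id}}\tau(a,\widetilde{X})-\sum_{a\neq\mathrm{Id}}\sum_{z\in\mathrm{Fix}(a)}\mathrm{def}(a,\phantom{}^{g}\widetilde{B})[z]$. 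It then remains to identify $\sum_{a\neq\mathrm{Id}}\tau(a,\widetilde{X})$ with $(|\Gamma|-1)\epsilon(E)$: since $\widetilde{X}=D(E)$ has $b_2=1$ (the class of the zero section) with self-intersection $\mathrm{sign}\,\chi(E)$, the image $\widehat{H}^2$ is one-dimensional with cup-product form of sign $\epsilon(E)$, and because $\Gamma$ acts trivially on this line (it acts as the identity on the base $\Sigma$ up to the isolated fixed points, and preserves the zero section), each $\tau(a,\widetilde{X})$ equals $\epsilon(E)$; summing over the $|\Gamma|-1$ nontrivial elements yields $(|\Gamma|-1)\epsilon(E)$, which is the claimed formula. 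This last bookkeeping is routine once the adiabatic limit is in hand.
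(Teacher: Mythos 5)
Your proposal is correct and follows essentially the same route as the paper: Chern--Simons vanishing via Proposition \ref{Prop_CS_vanish_foliate}, the adiabatic limit value $\chi(E)/3-\epsilon(E)$ from the known circle-bundle formula, and the rho invariant by summing the $G$-signature formula over $a\neq\mathrm{Id}$ and identifying $\tau(a,\widetilde{X})=\epsilon(E)$ from the cohomology of the disc bundle. The only differences are minor: the paper simply cites the Dai--Zhang adiabatic limit theorem (Theorem 3.2 of \cite{[DaZ]}, formula (5.4) of \cite{[DaW]}) for the value you sketch how to derive, and it runs the $\tau(a,\widetilde{X})=\epsilon(E)$ computation through the Thom isomorphism so that the degenerate case $\chi(E)=0$ (where $\widehat{H}^{2}=0$ rather than one-dimensional, contrary to your phrasing) is also covered.
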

\begin{proof}
By proposition \ref{Prop_CS_vanish_foliate}, it is sufficient to
consider that the $\mathcal{F}$-metric $g_{\mathcal{F}}$ and the
auxiliary metric $g_{\varepsilon}$ are of product-type:
\[
\hat{\nu}^{*}c^{*}g_{\mathcal{F}}=\left(\frac{dx^{2}}{x^{4}}+\frac{\phi^{*}h}{x^{2}}+g_{F}\right)\text{ and }\hat{\nu}^{*}c^{*}g_{\varepsilon}=\left(\frac{dx^{2}}{\varepsilon^{4}}+\frac{\phi^{*}h}{\varepsilon^{2}}+g_{F}\right).
\]
This is because the perturbations $xA+x^{2}B\in\Gamma(S^{2}[^{\mathcal{F}}T^{*}\overline{M}])$
to $g_{\mathcal{F}}$ that we are allowing yield vanishing transgression
integrals when $\varepsilon\rightarrow0$. Also, since $(g_{\phi})_{|x=\varepsilon}\equiv(\varepsilon^{-2}\phi^{*}h+g_{F})$
on $[0,1[_{x}\times\widetilde{W}$, theorem \ref{Index} and proposition
\ref{Prop_CS_vanish_foliate} imply that: 
\[
\chi(M)=\int_{M}e(M,\phantom{}^{\mathcal{F}}\nabla)\text{ and }\tau(M)=\int_{M}L(M,\phantom{}^{\mathcal{F}}\nabla)-\frac{1}{2}\lim_{\varepsilon\rightarrow0}\eta(\widetilde{W},\varepsilon^{-2}\phi^{*}h+g_{F}).
\]
The last limit coincides with the adiabatic limit of the eta invariant
of the odd signature operator associated to the metric $(\varepsilon^{-2}\phi^{*}h+g_{F})$
on $\widetilde{W}=S(E)$, so as a special case of theorem 3.2 in \cite{[DaZ]}
(also formula (5.4) in \cite{[DaW]} ), we have:
\[
\frac{1}{2}a{\displaystyle {\lim_{\varepsilon\rightarrow0}}\eta(S(E),\varepsilon^{-2}\phi^{*}h+g_{F})=\frac{\chi(E)}{3}-\epsilon(E)}.
\]
To determine the expression of the rho invariant, we use the results
(and notations) of subsection \ref{subsec2.1} on the spaces \ensuremath{\widetilde{X}=D(E)}
 and $\widetilde{W}=S(E)$. From the $G$-signature formula and the
definition of \ensuremath{\rho}
 in terms of the $G$-eta invariants, we have: 
\[
\rho(\widetilde{W},W)={\displaystyle {\sum_{a\neq Id}}\left[\tau(a,\widetilde{X})-{\displaystyle {\sum_{z\in\text{Fix}(a)}}def(a,\phantom{}^{g}\widetilde{B})[z]}\right]}.
\]
It remains to specify the value of $\tau(a,\widetilde{X})=\text{Tr}\left(a^{*}|_{\widehat{H}_{+}^{2}}\right)-\text{Tr}\left(a^{*}|_{\widehat{H}_{-}^{2}}\right)$
under our hypotheses. We have the following (complex) cohomology group
isomorphisms 
\[
H^{2}(\widetilde{X})\overset{\text{homotopy}}{\simeq}H^{2}(\Sigma)\simeq\mathbb{C},
\]
\[
H^{2}(\widetilde{X},\widetilde{W})\overset{\text{def.}}{\simeq}H_{c}^{2}(\widetilde{X}\smallsetminus\widetilde{W})\overset{\text{Poincaré}}{\simeq}H^{2}(\widetilde{X}\smallsetminus\widetilde{W})\overset{\text{homotopy}}{\simeq}H^{2}(\Sigma)\simeq\mathbb{C}.
\]
Since the $a^{*}$ are induced by orientation preserving isometries,
we have that $a^{*}|_{H^{2}(\widetilde{X})}=Id_{H^{2}(\widetilde{X})}$.
Moreover, the image $\widehat{H}^{2}=\widehat{H}_{+}^{2}\oplus\widehat{H}_{-}^{2}$
is at most 1-dimensional, so one of the subspaces must be trivial,
hence $\tau(a,\widetilde{X})\in\{\pm1,0\}$. On the other hand, the
spaces $\widehat{H}_{\pm}^{2}$ are the subspaces on which the quadratic
form given by 
\[
Q:H^{0}(\Sigma)\otimes H^{0}(\Sigma)\rightarrow\mathbb{R};\alpha\otimes\beta\mapsto\big<\alpha\smile\beta\smile\left[e_{E}\right],\left[\Sigma\right]\big>,
\]
is either positive or negative definite ($\left[e_{E}\right]$ is
the Euler class of $E$ here, c.f. section 5 of \cite{[DaW]}). This
results from the Thom isomorphism $\cdot\smile\Phi:H^{0}(\Sigma)\rightarrow H_{c}^{2}(\widetilde{X}\smallsetminus\widetilde{W})$
and the fact that $\big<\Phi^{2},[\widetilde{X}]\big>=\big<\left[e_{E}\right],\left[\Sigma\right]\big>$,
where $\Phi$ denotes the Thom class. The sign of $\tau(a,\widetilde{X})$
is hence that of $Q(1,1)=\chi(E)$, i.e $\tau(a,\widetilde{X})=\epsilon(E)$,
and the result follows.\end{proof}
\begin{rem*}
For practical computations, having explicit expressions for the action
of $\Gamma$ should allow one to determine coherent systems of angles
$\{\theta_{a,j}(z)\}_{j=1}^{2}$ for the fixed points $z\in\text{Fix}(a)\text{ of }a\in\Gamma$.
In this case, the boundary correction to the signature is 
\[
\frac{1}{2}\lim_{\varepsilon\rightarrow0}\eta(W,(g_{\mathcal{F}})_{|x=\varepsilon})=\frac{1}{|\Gamma|}\left[\frac{\chi(E)}{3}-\sum_{a\neq Id}\sum_{z\in\text{Fix}(a)}\prod_{j=1}^{2}\cot\left(\theta_{a,j}(z)/2\right)\right]-\epsilon(E).
\]

\end{rem*}

\subsection{Hitchin-Thorpe inequality}

Here, we are interested in an obstruction to the existence of Einstein
metrics on the noncompact $4$-manifolds that we are considering.
\begin{thm}
\label{HTIFB}Let \ensuremath{M}
 be a $4$-dimensional manifold with foliated geometry at infinity
satisfying the assumptions listed before corollary \ref{Zsign}. If
$M$ admits an exact Einstein $\mathcal{F}$- or $\mathcal{F}_{c}$-metric,
then 
\[
\chi(M)\geq\frac{3}{2}\Bigg|\tau(M)-\epsilon(E)+\frac{1}{|\Gamma|}\left\{ {\displaystyle {\sum_{a\neq Id}\sum_{z\in\text{Fix}(a)}def(a,\phantom{}^{g}\widetilde{B})[z]}+\frac{\chi(E)}{3}}\right\} \Bigg|.
\]
If equality occurs, then the universal cover of $M$ is a complete
Ricci-flat (anti-)self-dual manifold.\end{thm}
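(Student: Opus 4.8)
The plan is to combine the topological formulae of Corollary \ref{Zsign} with the classical pointwise Gauss–Bonnet/signature integrands and the Einstein condition, following the strategy of the original Hitchin–Thorpe argument adapted to the present noncompact setting. First I would write down the two integral expressions from Corollary \ref{Zsign}: we have $\chi(M)=\int_M e(M,{}^g\nabla)$ and $\tau(M)=\int_M L(M,{}^g\nabla)+\mathcal{B}$, where I abbreviate the boundary contribution by $\mathcal{B}=-\tfrac{1}{|\Gamma|}\{\sum_{a\neq\mathrm{Id}}\sum_{z\in\mathrm{Fix}(a)}\mathrm{def}(a,{}^g\widetilde{B})[z]+\tfrac{\chi(E)}{3}\}+\epsilon(E)$. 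Rearranging gives $\tau(M)-\mathcal{B}=\int_M L(M,{}^g\nabla)$, and the absolute value appearing in the statement is exactly $|\tau(M)-\mathcal{B}|=|\int_M p_1/3|$ with $p_1$ the first Pontryagin form of ${}^g\nabla$. So the theorem reduces to the pointwise inequality $e \geq \tfrac{3}{2}|p_1/3|$ as $4$-forms (integrated), which is precisely where the Einstein condition enters.

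Next I would recall the standard curvature decomposition in dimension $4$: for any metric, the Gauss–Bonnet integrand is $e = \tfrac{1}{8\pi^2}(|W^+|^2+|W^-|^2 - \tfrac{1}{2}|\mathring{\mathrm{Ric}}|^2 + \tfrac{1}{24}s^2)\,dV$ and the signature integrand is $\tfrac{p_1}{3}=\tfrac{1}{12\pi^2}(|W^+|^2-|W^-|^2)\,dV$, where $W^\pm$ are the self-dual and anti-self-dual Weyl pieces, $\mathring{\mathrm{Ric}}$ the trace-free Ricci tensor, and $s$ the scalar curvature. When $g$ is Einstein, $\mathring{\mathrm{Ric}}\equiv0$, so the pointwise comparison becomes $2e - 3\,|p_1/3| = \tfrac{1}{4\pi^2}(|W^\mp|^2 + \tfrac{1}{24}s^2)\,dV \geq 0$ (choosing the sign of $|W^+|^2-|W^-|^2$ appropriately in the absolute value). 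Integrating over $M$ — here I must note that these integrals converge because the metric is exact $\mathcal{F}$ or $\mathcal{F}_c$ and the integrands are the very same ones whose integrals appear (finitely) in Corollary \ref{Zsign} — yields $2\chi(M) \geq 3|\tau(M)-\mathcal{B}|$, which is the asserted inequality after dividing by $2$.

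For the equality case, observe that equality forces $|W^\mp|^2 + \tfrac{1}{24}s^2 \equiv 0$ pointwise on $M$, hence $s\equiv 0$ and one of $W^\pm$ vanishes identically; combined with $\mathring{\mathrm{Ric}}=0$ and $s=0$ this gives $\mathrm{Ric}\equiv0$, i.e. $M$ is Ricci-flat, and the vanishing of $W^\mp$ makes $g$ (anti-)self-dual. Passing to the universal cover preserves being Ricci-flat and (anti-)self-dual since these are local curvature conditions, and the universal cover with the pulled-back metric is complete because $M$ is complete; this gives the final clause. I would phrase this last step carefully so that ``(anti-)self-dual'' matches the sign choice made in the absolute value.

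The main obstacle I anticipate is not the curvature algebra, which is entirely classical, but justifying convergence and the interchange of integration with the pointwise inequality in the noncompact setting: one needs that $\int_M e$ and $\int_M L$ are genuinely finite and equal to the quantities in Corollary \ref{Zsign}, and that no boundary term is lost when integrating the manifestly nonnegative $4$-form $2e-3|p_1/3|$ over the open manifold $M$ rather than over the truncations $M_\varepsilon$. Since Corollary \ref{Zsign} already asserts $\chi(M)=\int_M e(M,{}^g\nabla)$ and $\tau(M)=\int_M L(M,{}^g\nabla)+\mathcal{B}$ as exact identities with convergent integrals, I can invoke it directly and the integration of the pointwise inequality is then legitimate; I will remark that the Einstein condition is used only in the interior, so the asymptotic/boundary analysis is untouched.
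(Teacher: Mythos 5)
Your proposal is correct and follows essentially the same route as the paper's own proof: both invoke Corollary \ref{Zsign} to reduce the claim to the interior curvature integrals, apply the four-dimensional decomposition of the Riemann tensor into $W^{\pm}$, traceless Ricci and scalar curvature, use the Einstein condition to kill the traceless Ricci term, and integrate the resulting pointwise inequality, with the identical treatment of the equality case via $S=|W^{\mp}|=0$ and passage to the universal cover. The only quibble is a harmless constant: with your normalizations one gets $2e-3\,|p_1/3|=\tfrac{1}{4\pi^{2}}\left(2|W^{\mp}|^{2}+\tfrac{1}{24}s^{2}\right)dV$, i.e.\ a factor $2$ on the Weyl term, which does not affect nonnegativity or the rigidity analysis.
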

\begin{proof}
On a 4-dimensional Riemannian manifold $(\overline{M},g)$ (with $g$
an exact foliated boundary or foliated cusp metric), the Riemann tensor
decomposes as (1.128, \cite{[B2]}) 
\[
^{g}R=\begin{pmatrix}W^{+}+\frac{S}{12}\text{Id} & Z\\
Z^{t} & W^{-}+\frac{S}{12}\text{Id}
\end{pmatrix},
\]
where $W^{+}\text{ and }W^{-}$ are respectively the self-dual and
anti-self-dual parts of the Weyl tensor, $S$ is the scalar curvature
and $Z=Ric-\frac{S}{4}\text{Id}$ is the traceless part of the Ricci
tensor. This decomposition allows us to re-express the Euler form
and the Hirzebruch $L$-polynomial in terms of the components of $^{g}R$,
so that our invariant integrals become (6.31 and 6.34, \cite{[B2]}):
\begin{align*}
\frac{3}{2}\int_{M}L(M,\phantom{}^{g}\nabla) & =\frac{1}{8\pi^{2}}\int_{M}\big(|W^{+}|^{2}-|W^{-}|^{2}\big)dVol^{g}\\
 & =\frac{3}{2}\left[\tau(M)+\frac{1}{|\Gamma|}\Big[\frac{1}{2}a{\displaystyle {\lim_{\varepsilon\rightarrow0}}\eta(\widetilde{A}_{\varepsilon})-\rho(\widetilde{W},W)\Big]}\right],\\
\int_{M}e(M,\phantom{}^{g}\nabla) & =\frac{1}{8\pi^{2}}\int_{M}\left(|W^{+}|^{2}+|W^{-}|^{2}-|Z|^{2}+\frac{S^{2}}{24}\right)dVol^{g}=\chi(M).
\end{align*}
Using the fact that $Z=0$ if $\ensuremath{g}$ is an Einstein metric,
adding and subtracting the integrals above leads to 
\[
\chi(M)\geq\frac{3}{2}\Bigg|\tau(M)+\frac{1}{|\Gamma|}\Big[\frac{1}{2}a{\displaystyle {\lim_{\varepsilon\rightarrow0}}\eta(\widetilde{A}_{\varepsilon})-\rho(\widetilde{W},W)\Big]}\Bigg|,
\]
and the inequality of the statement follows from lemma \textit{\ref{EtaRhoLmma}}.
When equality occurs, the equations above yield 
\[
\int_{M}\left(|W^{\pm}|^{2}+\frac{S^{2}}{24}\right)dVol^{g}=0\Longrightarrow S=|W^{\pm}|=0.
\]
The vanishing of the scalar curvature means that $M$ is Ricci-flat,
and the vanishing of the (anti-) self-dual part of the Weyl tensor
implies that the holonomy group of the pullback of $g$ to the universal
cover of $M$ is contained in $SU(2)$ (by section 3 of \cite{[H]}),
which proves the second claim of the theorem.
\end{proof}

\section{\textbf{Examples}}

Our examples are built on quotients of the Gibbons-Hawking ansatz
by cyclic groups, which are studied extensively in \cite{[GH]}. In
appendix \ref{Apndx2}, we give some details and references on multi-Taub-NUT
metrics. Let $k\geq1$ be an integer, and suppose that $\{p_{j}\}_{j=1}^{k}$
are points in $\mathbb{R}^{3}$ of coordinates $p_{j}\equiv(\cos(2\pi j/k),\sin(2\pi j/k),0)$
with respect to the origin. We consider the principal circle bundle
\[
S^{1}\longrightarrow M\overset{\pi}{\longrightarrow}\mathbb{R}^{3}\smallsetminus\{p_{j}\}_{j=1}^{k},
\]
whose first Chern class yields $(-1)$ when paired with the homology
class associated to a sphere centred at one of the monopoles $p_{j}$.
We equip $M$ with the multi-Taub-NUT metric 
\[
g=\pi^{*}[V\cdot((dx^{1})^{2}+(dx^{2})^{2}+(dx^{3})^{2})]+\pi^{*}(V^{-1})\cdot(d\theta+\pi^{*}\omega)^{2},
\]
where the function $V:\mathbb{R}^{3}\smallsetminus\{p_{j}\}_{j=1}^{k}\rightarrow\mathbb{R}$
is defined as: 
\[
V(x)=1+\frac{1}{2}{\displaystyle {\sum_{j=1}^{k}}\frac{1}{|x-p_{j}|}},
\]
and determines $g$ uniquely, $\theta$ is a coordinate on the fibres,
and $\omega$ is a connection 1-form on $\mathbb{R}^{3}\smallsetminus\{p_{j}\}$
(unique up to ``gauge transformations'') of curvature $d\omega=*_{\mathbb{R}^{3}}dV$.
By suitably adding points $\{q_{i}\}$ to $M$ above the monopoles,
we obtain a smooth Ricci-flat hyper-K\"ahler completion $(M_{0},g_{0})$
of $(M,g)$ \cite{[AKL]}, with only one asymptotically locally flat
end at infinity. If $(\overline{M}_{0},\bar{g}_{0})$ is the compactification
at infinity of $(M_{0},g_{0})$, the boundary $\partial\overline{M}_{0}$
is diffeomorphic to a quotient of a Hopf fibration by a $\mathbb{Z}_{k}$-action
on the fibres \cite{[H79]}, i.e. it is the total space of a fibration
of the form
\[
S^{1}\longrightarrow S^{3}/\mathbb{Z}_{k}\overset{\phi}{\longrightarrow}S^{2}.
\]

Let $(r,\varphi,\psi)$ be the spherical coordinates on $\mathbb{R}^{3}$
($r=\sqrt{(x^{1})^{2}+(x^{2})^{2}+(x^{3})^{2}}$), and consider the
metric $h=(d\varphi)^{2}+\sin^{2}\varphi(d\psi)^{2}$ on the base
$S^{2}$ of $\partial\overline{M}_{0}$. If we introduce the boundary
defining function $\rho:=1/\pi^{*}(r)$ on $\pi^{-1}(\{r>1\})\subset\overline{M}_{0}$,
then the multi-Taub-NUT metric can be rewritten as:
\[
g_{0}=\frac{d\rho^{2}}{\rho^{4}}+\frac{\phi^{*}h}{\rho^{2}}+\kappa,
\]
where $\kappa=V^{-1}(d\theta+\omega)^{2}\in\Gamma(\partial\overline{M}_{0},S^{2}T^{*}\partial\overline{M}_{0})$
restricts to the metric $V^{-1}(d\theta)^{2}$ on the fibres, and
depends smoothly on $\rho$, which means that $g_{0}$ is an asymptotic
$\phi$-metric.

In the examples below, we consider quotients of $\partial\overline{M}_{0}$
by the action of a cyclic group, in which case $\overline{g}_{0}$
gives rise to an asymptotic $\mathcal{F}$-metric (after averaging
with respect to the group action).

Finally, the invariants of interest for our discussion are the following
(\cite{[GH]}, (5.4) in \cite{[DaW]}): 
\[
\chi(M_{0})=k\text{, }\tau(M_{0})=1-k\text{ and }\frac{1}{2}a\lim\eta\left(\partial\overline{M}_{0},\bar{g}_{0|\partial\overline{M}_{0}}\right)=\frac{k}{3}-1.
\]
To simplify the notations, we will write $\rho$ instead of $\rho(\partial\overline{M}_{0},\partial\overline{M}_{0}/\Gamma)$,
and $\eta^{A}$ instead of $\frac{1}{2}a\lim\eta(\partial\overline{M}_{0},\bar{g}_{0|\partial\overline{M}_{0}})$.
\begin{example}
Let $\Gamma=\mathbb{Z}_{k}$, and suppose that the generator $1\in\Gamma$
acts as follows on $\overline{M}_{0}$: On the base $\mathbb{R}^{3}$,
the group $\Gamma$ acts via a rotation of \ensuremath{+\frac{2\pi}{k}}
 about the axis \textit{Oz} (w.r.t the canonical orientation). As
explained on p.106 of \cite{[W]}, this action can be lifted to an
action on $\overline{M}_{0}$ by isometries, but not in a unique fashion.
The action on $\overline{M}_{0}$ is uniquely determined once we specify
how $\Gamma$ acts on the fibre above the origin in $\mathbb{R}^{3}$,
and to ensure that it is free, we require that $1\in\Gamma$ acts
as multiplication by $e^{i\frac{2\pi}{k}}$ on $\pi^{-1}(\{0\})\approx S^{1}$
(as it is done in section 6 of \cite{[GRR]}). We consider the manifold
$M_{1}:=\overline{M}_{0}/\Gamma$ with boundary $\partial M_{1}=\partial\overline{M}_{0}/\Gamma$,
and induced metric $g_{1}$, which is a Ricci-flat foliated boundary
metric (thus Einstein in particular). Since the quotient map $\overline{M}_{0}\rightarrow M_{1}$
is a local isometry, we have: 
\[
\begin{split}\chi(M_{1})-\frac{3}{2}\Big|\tau(M_{1})+\frac{1}{k}\big(\eta^{A}-\rho\big)\Big| & =\int_{M_{1}}e(M_{1},\phantom{}^{g_{1}}\nabla)-\frac{3}{2}\Big|\int_{M_{1}}L(M_{1},\phantom{}^{g_{1}}\nabla)\Big|\\
 & =\frac{1}{k}\Bigg\{\int_{\overline{M}_{0}}e(\overline{M}_{0},\phantom{}^{g_{0}}\nabla)-\frac{3}{2}\Big|\int_{\overline{M}_{0}}L(\overline{M}_{0},\phantom{}^{g_{0}}\nabla)\Big|\Bigg\}\\
 & =\frac{1}{k}\Big\{\chi(\overline{M}_{0})-\frac{3}{2}\Big|\tau(\overline{M}_{0})+\tilde{\eta}\Big|\Big\}\\
 & =\frac{1}{k}\Big\{ k-\frac{3}{2}\big|1-k+\frac{k}{3}-1\big|\Big\}=\frac{k-k}{k}=0.
\end{split}
\]
We note that our inequality reduces to that of theorem 1.1 (and cor.
1.2) of \cite{[DaW]} since $M_{1}$ is a global quotient and $\overline{M}_{0}$
is a manifold with fibred boundary.
\end{example}

\begin{example}
We consider $M_{1}$ of the previous example, and we make $l\geq1$
blow-ups in its interior. We obtain the space $X_{1}=M_{1}\#l\overline{\mathbb{C}P}^{2}$,
where $\overline{\mathbb{C}P}^{2}$ is the complex projective plane
with reverse orientation, for which we have $\chi(\overline{\mathbb{C}P}^{2})=3$
and $\tau(\overline{\mathbb{C}P}^{2})=-1$. By the identities
\begin{align*}
\tau(A\#B) & =\tau(A)+\tau(B),\\
\chi(A\#B) & =\begin{cases}
\chi(A)+\chi(B) & \text{for }\dim A=\dim B=2m+1\\
\chi(A)+\chi(B)-2 & \text{for }\dim A=\dim B=2m
\end{cases},
\end{align*}
we have 
\begin{align*}
\tau(X_{1})+\frac{1}{k}(\eta^{A}-\rho) & =\frac{1}{k}\left[\int_{\overline{M}_{0}}L(\overline{M}_{0},\phantom{}^{g_{0}}\nabla)\right]+l\cdot\tau(\overline{\mathbb{C}P}^{2})=-\Big(\frac{2}{3}+l\Big),\\
\chi(X_{1}) & =\frac{1}{k}\left[\int_{\overline{M}_{0}}e(\overline{M}_{0},\phantom{}^{g_{0}}\nabla)\right]+l\cdot\left(\chi(\overline{\mathbb{C}P}^{2})-2\right)=1+l.
\end{align*}
Thus, for any $l\geq1$, we get 
\[
\chi(X_{1})-\frac{3}{2}\Bigg|\tau(X_{1})+\frac{1}{k}(\eta^{A}-\rho)\Bigg|=(l+1)-\Big|-\Big(1+\frac{3}{2}l\Big)\Big|=-\frac{l}{2}<0.
\]
Using the expressions of the invariant integrals in terms of the tensors
$W^{\pm}\text{, }Z$ and $\frac{S}{4}\text{Id}$ (as in the proof
of \ref{HTIFB}), this inequality implies that $|Z|^{2}>2|W^{\pm}|^{2}+(S^{2}/24)\geq0$,
which means that $X_{1}$ doesn't admit an Einstein exact $\mathcal{F}/\mathcal{F}_{c}$-metric
since the component $Z=Ric-\frac{S}{4}\text{Id}$ can't vanish.
\end{example}

\begin{example}
We start by modifying $M_{1}$ of the previous examples to construct
a space which isn't a global quotient of a manifold with fibred boundary.
The presentation here is largely based upon example 26 of \cite{[GRR]}.
Let $B\Gamma$ be the classifying space of $\Gamma$-bundles, and
$\theta:\partial M_{1}\rightarrow B\Gamma$ the map corresponding
to the covering $\partial\overline{M}_{0}\rightarrow\partial M_{1}=\partial\overline{M}_{0}/\Gamma$.
Let $\gamma:S^{1}\rightarrow\partial M_{1}$ be a loop such that $[\theta\circ\gamma]\in\pi_{1}(B\Gamma)$
is nonzero, and $\tilde{\gamma}:S^{1}\rightarrow M_{1}\smallsetminus\partial M_{1}$
a smooth translation of $\gamma$ into the interior of $M_{1}$. The
loop $\tilde{\gamma}(S^{1})$ has a trivialized tubular neighborhood
in $M_{1}$ since the tangent bundle of $\partial M_{1}$ is trivial
(the boundary is 3-dimensional and orientable). We now perform a codimension
3 surgery along $\tilde{\gamma}(S^{1})\subset M_{1}$ (\cite{[LM]},
p.299): let $\mathcal{V}\approx S^{1}\times\overline{\mathbb{B}}^{3}$
be a closed subset of $M_{1}$ such that $\tilde{\gamma}(S^{1})\subset\partial\mathcal{V}$,
and consider $N\approx\overline{\mathbb{D}}^{2}\times S^{2}$ such
that $\partial\mathcal{V}\approx\partial N\approx S^{1}\times S^{2}$,
then define 
\[
M_{2}:=(M_{1}\smallsetminus\mathcal{V}^{\circ})\cup_{\partial N}N,
\]
with $\partial M_{2}=\partial M_{1}$. In the manifold $M_{2}$, the
loop $\tilde{\gamma}(S^{1})$ is contractible, which implies that
the map $\theta:\partial M_{2}\rightarrow B\Gamma$ cannot be extended
to the interior $M_{2}\smallsetminus\partial M_{2}$, and hence that
$M_{2}$ is not a global quotient.

We need to determine the topological invariants of $M_{2}$. Since
$\partial M_{2}=\partial\overline{M}_{0}/\Gamma$, we have the same
boundary correction as before. For the Euler characteristic, we use
the identity 
\[
\chi(A\cup B)=\chi(A)+\chi(B)-\chi(A\cap B)
\]
on $M_{2}=(M_{2}\smallsetminus N^{\circ})\cup N$ and $M_{1}=(M_{1}\smallsetminus\mathcal{V}^{\circ})\cup\mathcal{V}$
to obtain 
\[
\chi(M_{2})=\chi(M_{1})+2=2+\frac{1}{k}\left[\int_{\overline{M}_{0}}e(\overline{M}_{0},\phantom{}^{g_{0}}\nabla)\right]=3,
\]
since $\chi(M_{2}\smallsetminus N)=\chi(M_{1}\smallsetminus\mathcal{V})$
by construction, $\chi(N)=2$ and $\chi(\mathcal{V})=0$ by homotopy
equivalences, and $\chi(\partial N)=0$ since $\dim\partial N=3$.
For the Hirzebruch signature of $M_{2}$, we have the following equations:
\begin{align*}
\tau(M_{1}) & =\tau(M_{1}\smallsetminus\mathcal{V})+\tau(\mathcal{V}),\\
\tau(M_{2}) & =\tau(M_{2}\smallsetminus N)+\tau(N).
\end{align*}
We have $H^{2}(\mathcal{V})=H^{2}(S^{1})=0$, so the restriction of
the cup product to the image of the map $H^{2}(\mathcal{V},\partial\mathcal{V})\rightarrow H^{2}(\mathcal{V})$
is zero, and by the definition of the signature for a manifold with
boundary, $\tau(\mathcal{V})=0$. To determine $\tau(N)$, we consider
the trivial bundle $\mathcal{E}=\mathbb{R}^{2}\times S^{2}$ over
$S^{2}$, and set $N=D(\mathcal{E})$ and $\partial N=S(\mathcal{E})$
(resp. the disk and circle subbundles of $\mathcal{E}\rightarrow S^{2}$).
Using the Künneth formula and the Thom isomorphism, we obtain the
following nonzero cohomology groups: 
\begin{align*}
H^{r}(N,\partial N) & \simeq H^{r}(D(\mathcal{E}),S(\mathcal{E}))\simeq H^{r-2}(S^{2})=\mathbb{R}\text{ for }r=2,4,\\
H^{r}(\partial N) & \simeq\mathbb{\mathbb{R}}\text{ for }r=0,\cdots,3.
\end{align*}
 Since $H^{*}(N)\simeq H^{*}(S^{2})$, the long exact sequence for
relative cohomology then reads 
\[
0\rightarrow H^{1}(\partial N)\overset{\delta}{\rightarrow}H^{2}(N,\partial N)\overset{j}{\rightarrow}H^{2}(N)\rightarrow H^{2}(\partial N)\rightarrow0,
\]
and we see that $\delta$ is injective, and that $j$ is zero, which
means that $\tau(N)=0$. Finally:
\[
\tau(M_{2})=\tau(M_{2}\smallsetminus N)=\tau(M_{1}\smallsetminus\mathcal{V})=\tau(M_{1}),
\]
so that
\[
\tau(M_{2})+\frac{1}{|\Gamma|}(\eta^{A}-\rho)=-\frac{2}{3}.
\]

To illustrate theorem \ref{HTIFB}, consider the space $X_{2}=M_{2}\#l\overline{\mathbb{C}P}^{2}$
with $l\ge1$ and the blow-ups in the interior. Proceeding as in the
second example to determine $\chi(X_{2})$ and $\tau(X_{2})$, we
have:
\begin{align*}
\tau(X_{2})+\frac{1}{|\Gamma|}(\eta^{A}-\rho) & =-\Big(\frac{2}{3}+l\Big),\\
\chi(X_{2}) & =l+3,
\end{align*}
and therefore
\[
\chi(X_{2})-\frac{3}{2}\Big|\tau(X_{2})+\frac{1}{k}\big(\eta^{A}-\rho\big)\Big|=\frac{4-l}{2}.
\]
By theorem \ref{HTIFB}, the space $X_{2}$ cannot admit an Einstein
exact $\mathcal{F}/\mathcal{F}_{c}$-metric if $l>4$.
\end{example}
\appendix

\section{Chern-Simons corrections for $\phi$- and $d$-metrics\label{Apndx1}}

Let $N$ and $F$ be closed compact oriented manifolds such that $\dim N+\dim F=4k-1$
for some $k\ge1$ and $\dim F>0$. We consider a smooth fibration
$F\rightarrow W\overset{\phi}{\rightarrow}N$, where the total space
$W$ is the boundary of a compact manifold $M$ with fixed boundary
defining function $x:M\rightarrow\mathbb{R}_{+}$. In this appendix,
we are interested in the behaviour of the Chern-Simons terms of the
Euler characteristic and the signature of $M$ as $x\rightarrow0$,
assuming that this space is equipped with a fibred boundary or a fibred
cusp metric.

\subsection{Riemannian metrics on the boundary: }

In addition to the fibration structure of $W$, we assume that we
have the following geometric objects:
\begin{itemize}
\item A splitting $TW=T^{V}W\oplus T^{H}W$ into vertical and horizontal
subbundles, where $T^{H}W$ is identified with $\phi^{*}TN$ and $T^{V}W=\ker(d\phi)$
(i.e. a connection on $W$);
\item A Riemannian metric $h\in\Gamma(N,S^{2}T^{*}N)$ on $N$;
\item A family of Riemannian metrics $\{g_{F}(y)\}_{y\in N}$ on the fibre
space $F$ that is smoothly parametrized by the base $N$.
\end{itemize}
We thus have a metric $\phi^{*}h$ on the subbundle $T^{H}W$, and
by interpreting the family $\{g_{F}\}$ as a field of symmetric bilinear
forms $\tau\in\Gamma(W,S^{2}(T^{V}W)^{*})$ , we may then define $g_{W}=\phi^{*}h+\tau$,
that gives a Riemannian submersion $\phi:(W,g_{W})\rightarrow(N,h)$
for which the splitting $T^{V}W\oplus T^{H}W$ is orthogonal.

\subsection{Riemannian metrics on $M$:}

In the upcoming discussion, we work on a collar neighborhood of the
boundary in $M$ that is diffeomorphic to $[0,1[_{x}\times W$. The
local coordinates on $N$ will be denoted by $\{y^{i}\}_{i=1}^{\dim N}$,
and we will write $\{z^{a}\}_{a=1}^{\dim F}$ for those on the fibre
$F$. We will use the following notational convention for tensors
on $M$ near the boundary:
\begin{itemize}
\item The index 0 is reserved for the boundary defining function: $\partial_{0}=\frac{\partial}{\partial x}$;
\item The indices $i,j,k,l\in\{1,\cdots,\dim N\}$ designate variables on
the base of $W$: $\partial_{i}=\frac{\partial}{\partial y^{i}}$;
\item The indices $a,b,c,d\in\{1,\cdots,\dim F\}$ refer to coordinates
on the fiber $F$: $\partial_{a}=\frac{\partial}{\partial z^{a}}$;
\item Greek indices will be used to designate arbitrary indices in the summation
convention: $\{\partial_{\alpha}\}=\{\partial_{0},\partial_{i},\partial_{a}\}$.
\end{itemize}
We now set-up the notations for the metrics that we will be dealing
with, which are smooth metrics on the $\phi$- and $d$-tangent bundles,
$^{\phi}TM$ and $^{d}TM$ (\cite{[DaW]}, section 3).

Let $\tilde{g}_{\phi}=(dx/x^{2})^{2}+(\phi^{*}h/x^{2})+\tau$ be a
\textit{product-type fibred boundary metric}, and $\tilde{g}_{d}=x^{2}\tilde{g}_{\phi}$
the associated \textit{product-type fibred cusp metric} ($\phi$-
and $d$-metrics for short), where $g_{W}=\phi^{*}h+\tau$ is the
submersion metric discussed above. Near $W=\partial M$, we have the
following local coordinate expressions:
\begin{align*}
\tilde{g}_{\phi} & =\frac{dx^{2}}{x^{4}}+\frac{h_{ij}(y)}{x^{2}}dy^{i}\otimes dy^{j}+\tau_{ab}(y,z)dz^{a}\otimes dz^{b},\\
\tilde{g}_{d} & =\frac{dx^{2}}{x^{2}}+h_{ij}(y)dy^{i}\otimes dy^{j}+x^{2}\tau_{ab}(y,z)dz^{a}\otimes dz^{b}.
\end{align*}

Let $A,B\in\Gamma(M,S^{2}[{}^{\phi}T^{*}M])$ be symmetric bilinear
forms, with $A$ such that $A(x^{2}\partial_{x},\cdot)\equiv0$ and
$A(x\partial_{i},x\partial_{j})=O(x)$. \textit{Asymptotic metrics}
are first order perturbations (in $x$) of product-type metrics, and
will be denoted by:
\[
\hat{g}_{\phi}=\tilde{g}_{\phi}+x\cdot A\text{, }\hat{g}_{d}=x^{2}\hat{g}_{\phi}.
\]
\textit{Exact metrics} are second order perturbations of product-type
metrics, and will be denoted by
\[
g_{\phi}=\tilde{g}_{\phi}+x\cdot A+x^{2}\cdot B=\hat{g}_{\phi}+x^{2}\cdot B\text{, }g_{d}=x^{2}g_{\phi}.
\]
For asymptotic metrics, we still have an orthogonal decomposition
$^{\phi}T([0,1[_{x}\times W)=\langle x^{2}\partial_{x}\rangle\oplus TW$
near the boundary. From now onward, asymptotic $\phi$-metrics will
be expressed as:
\[
\hat{g}_{\phi}=\frac{dx^{2}}{x^{4}}+\frac{\phi^{*}h}{x^{2}}+\kappa\text{ and }\hat{g}_{d}=\frac{dx^{2}}{x^{2}}+\phi^{*}h+x^{2}\kappa,
\]
where $\kappa=\tau+xA\in\Gamma(W,S^{2}T^{*}W)$ is a bilinear form
on the boundary, depending smoothly on the bdf $x$, and restricting
to a metric on the fibres $F$. As for exact metrics, these are the
most general smooth Riemannian metrics on $^{\phi}TM$ and $^{d}TM$
that we consider here (for a fixed bdf $x$), but all the properties
we are interested in are coming from their ``asymptotic part''.

For exact $\phi$- and $d$-metrics, the Levi-Civita covariant derivatives
will be denoted by $^{\phi}\nabla$ and $^{d}\nabla,$ the connection
1-forms by $^{\phi}\omega$ and $^{d}\omega$, and the curvature 2-forms
by $^{\phi}\Omega$ and $^{d}\Omega$. To designate the correponding
objects associated to the product-type and asymptotic metrics, we
will use the same superscripts on the left and add a ``tilde'' (product)
or a ``hat'' (asymptotic) above.
\begin{rem*}
1) The factor $x^{2}$ in exact metrics is the smallest exponent for
$x$ that gives a well-defined covariant derivative $^{\phi}\nabla:\Gamma(\phantom{}^{\phi}TM)\rightarrow\Gamma(\phantom{}^{\phi}TM\otimes T^{*}M)$.
Indeed, taking $g_{\phi}=\hat{g}_{\phi}+xB$ with $B\in\Gamma(M,S^{2}[^{\phi}T^{*}M])$
would give $\langle dz^{a},\phantom{}^{\phi}\nabla_{\partial_{x}}\partial_{z^{b}}\rangle=O(x^{-1})$,
which blows-up as $x\rightarrow0$.

2) In the local frame given by $\{\partial_{\alpha}\otimes dx^{\alpha}\}$
on $\text{End}TM$, an element $B\in\Gamma(M,S^{2}[^{\phi}T^{*}M])$
decomposes as 
\[
B=\left(\begin{array}{ccc}
\frac{1}{x^{4}}B_{00} & \frac{1}{x^{3}}B_{0i} & \frac{1}{x^{2}}B_{0a}\\
\frac{1}{x^{3}}B_{i0} & \frac{1}{x^{2}}B_{ij} & \frac{1}{x}B_{ia}\\
\frac{1}{x^{2}}B_{a0} & \frac{1}{x}B_{ai} & B_{ab}
\end{array}\right),
\]
where the $B_{\alpha\beta}$ are all smooth on $M$. 
\end{rem*}
Finally, we introduce the auxiliary metrics $\hat{g}_{\varepsilon}$
and $g_{\varepsilon}$ on $TM$ with $\varepsilon\in]0,1[$. The first
one is of asymptotic type:
\[
\hat{g}_{\varepsilon}:=\frac{dx^{2}}{\varepsilon^{4}}+\frac{\phi^{*}h}{\varepsilon^{2}}+\kappa,
\]
while the second is a product metric near the boundary:
\[
g_{\varepsilon}:=\frac{dx^{2}}{\varepsilon^{4}}+\frac{\phi^{*}h}{\varepsilon^{2}}+\tau.
\]
Their restrictions to $W=\partial M$ blow-up the metrics $(\phi^{*}h+\kappa)$
and $g_{W}=(\phi^{*}h+\tau)$ resp. in the direction of the base,
and they coincide with $\phi$-metrics on hypersurfaces $\{x=\varepsilon\}\subset M$:
\[
(\hat{g}_{\varepsilon})_{|\{x=\varepsilon\}}=(\hat{g}_{\phi})_{|\{x=\varepsilon\}}\text{; }(g_{\varepsilon})_{|\{x=\varepsilon\}}=(\tilde{g}_{\phi})_{|\{x=\varepsilon\}}.
\]
Auxiliary metrics are introduced to apply the Atiyah-Patodi-Singer
index theorem. The symbols $^{\varepsilon}\nabla$, $^{\varepsilon}\omega$
and $^{\varepsilon}\Omega$ will respectively denote the Levi-Civita
connection, the connection 1-form and the curvature 2-form of $g_{\varepsilon}$
and $\varepsilon^{2}\cdot g_{\varepsilon}$, while $^{\varepsilon}\hat{\nabla}$,
$^{\varepsilon}\hat{\omega}$ and $^{\varepsilon}\widehat{\Omega}$
will designate the same objects for $\hat{g}_{\varepsilon}$ and $\varepsilon^{2}\cdot\hat{g}_{\varepsilon}$.

We now have an important technical result, that relates the connection
1-forms of the metrics defined above:
\begin{lem}
\label{DiffConnectForms}Let $M_{\varepsilon}=\{x\ge\varepsilon\}\subset M$
with boundary $\partial M_{\varepsilon}=\{x=\varepsilon\}$ for $0<\varepsilon\ll1$.
Then:
\[
(\phantom{}^{\phi}\omega-\phantom{}^{\phi}\hat{\omega})_{|\partial M_{\varepsilon}}\in\varepsilon\cdot\Omega^{1}\left(\partial M_{\varepsilon},\text{End}\left(\phantom{}^{\phi}TM\right)|_{\partial M_{\varepsilon}}\right),
\]
\[
(\phantom{}^{d}\omega-\phantom{}^{d}\hat{\omega})_{|\partial M_{\varepsilon}}\in\varepsilon\cdot\Omega^{1}\left(\partial M_{\varepsilon},\text{End}\left(\phantom{}^{d}TM\right)|_{\partial M_{\varepsilon}}\right).
\]
\[
(\phantom{}^{d}\hat{\omega}-\phantom{}^{\varepsilon}\hat{\omega})_{|\partial M_{\varepsilon}}\in\varepsilon\cdot\Omega^{1}\left(\partial M_{\varepsilon},\text{End}\left(\phantom{}^{d}TM\right)|_{\partial M_{\varepsilon}}\right).
\]
\end{lem}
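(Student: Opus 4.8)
The plan is to reduce each of the three inclusions to a local computation on a collar $[0,1[_{x}\times W$ of the boundary, using the elementary fact that a difference of Levi-Civita connections is a tensor: if $\nabla,\widehat{\nabla}$ are the Levi-Civita connections of metrics $g,\widehat{g}$ and $C:=g-\widehat{g}$, then $D:=\nabla-\widehat{\nabla}\in\Gamma\big(T^{*}M\otimes\text{End}(TM)\big)$ is given by
\[
2\,g\big(D(X,Y),Z\big)=(\widehat{\nabla}_{X}C)(Y,Z)+(\widehat{\nabla}_{Y}C)(X,Z)-(\widehat{\nabla}_{Z}C)(X,Y),
\]
where $\widehat{\nabla}g=\widehat{\nabla}C$ has been used. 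First I would fix, near $W$, the orthonormal frame $\{e_{0}=x^{2}\partial_{x},e_{i},e_{a}\}$ adapted to the asymptotic $\phi$-metric (resp. $\{E_{0}=x\partial_{x},E_{i},E_{a}=x^{-1}e_{a}\}$ adapted to the asymptotic $d$-metric), in which the relevant background connections $\phantom{}^{\phi}\hat{\nabla}$, $\phantom{}^{d}\hat{\nabla}$, $\phantom{}^{\varepsilon}\hat{\nabla}$ are honest connections on $\phantom{}^{\phi}TM$ resp. $\phantom{}^{d}TM$, smooth up to the boundary with connection coefficients that remain bounded as $x\to0$ (this is the defining feature of $\phi$- and $d$-metrics, cf. \cite{[DaW]}, §3). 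The second elementary remark is that the restriction appearing in the statement is the pull-back of an $\text{End}$-valued $1$-form to $\partial M_{\varepsilon}=\{x=\varepsilon\}$, so only the slots of $D$ that are tangent to $\partial M_{\varepsilon}$ (the $e_{i}$'s and $e_{a}$'s, never $e_{0}$) contribute.

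For the first two inclusions one has $C=g_{\phi}-\hat{g}_{\phi}=x^{2}B$ (resp. $C=g_{d}-\hat{g}_{d}=x^{4}B$) with $B$ a smooth section of $S^{2}[\phantom{}^{\phi}T^{*}M]$. In the adapted orthonormal frame the components of $C$ absorb the compensating powers of $x$ carried by the frame vectors, and one verifies that $C$, together with its $\widehat{\nabla}$-covariant derivatives, has components that are $O(\varepsilon)$ after evaluation at $x=\varepsilon$: in $(\widehat{\nabla}_{e_{\alpha}}C)(e_{\beta},e_{\gamma})=e_{\alpha}(C_{\beta\gamma})-C(\widehat{\nabla}_{e_{\alpha}}e_{\beta},e_{\gamma})-C(e_{\beta},\widehat{\nabla}_{e_{\alpha}}e_{\gamma})$ the derivative term is controlled because the frame vectors preserve or lower the order in $x$ while $C_{\beta\gamma}$ is a smooth multiple of a positive power of $x$, and the two correction terms because $\widehat{\nabla}$ has bounded coefficients. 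Substituting into the displayed identity gives $D\in\varepsilon\cdot\Omega^{1}\big(\partial M_{\varepsilon},\text{End}(\phantom{}^{\phi}TM)_{|\partial M_{\varepsilon}}\big)$, and likewise with $\phantom{}^{d}TM$ and $\phantom{}^{d}\nabla-\phantom{}^{d}\hat{\nabla}$; this settles the first two statements.

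For the third inclusion the key observation is that $C=\hat{g}_{d}-\varepsilon^{2}\hat{g}_{\varepsilon}$ vanishes \emph{identically as a bilinear form along the whole hypersurface} $\{x=\varepsilon\}$, since there both metrics equal $\varepsilon^{-2}\,dx^{2}+\phi^{*}h+\varepsilon^{2}\,\kappa_{|x=\varepsilon}$. Hence, for $X$ tangent to $\partial M_{\varepsilon}$, $\big(\phantom{}^{\varepsilon}\hat{\nabla}_{X}C\big)_{|x=\varepsilon}=0$ — the derivative term because $C$ vanishes along the hypersurface and $X$ is tangent to it, the connection terms because $C\equiv0$ there — so in the difference formula only the terms in which the transverse vector $E_{0}=x\partial_{x}$ occupies the differentiating slot survive at $x=\varepsilon$. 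One then evaluates $\big(\phantom{}^{\varepsilon}\hat{\nabla}_{E_{0}}C\big)(E_{i},\cdot)_{|x=\varepsilon}$ and $\big(\phantom{}^{\varepsilon}\hat{\nabla}_{E_{0}}C\big)(E_{a},\cdot)_{|x=\varepsilon}$ from the explicit expression $C=(x^{-2}-\varepsilon^{-2})\,dx^{2}+(x^{2}-\varepsilon^{2})\,\kappa$: the $dx^{2}$ block never appears, since it pairs trivially with the tangential slots, and $x\partial_{x}$ applied to the $\kappa$-block, evaluated at $x=\varepsilon$ and re-expressed in the orthonormal frame of the hypersurface metric (the frame in which the bound is measured), produces an overall factor of order $\varepsilon$. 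This yields $\big(\phantom{}^{d}\hat{\omega}-\phantom{}^{\varepsilon}\hat{\omega}\big)_{|\partial M_{\varepsilon}}\in\varepsilon\cdot\Omega^{1}\big(\partial M_{\varepsilon},\text{End}(\phantom{}^{d}TM)_{|\partial M_{\varepsilon}}\big)$.

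The main obstacle is the bookkeeping of powers of $x$ through the two simultaneous frame changes — coordinate versus orthonormal, and $\phantom{}^{\phi}TM$ versus $\phantom{}^{d}TM$ — together with the a priori input that the background Levi-Civita connections are bounded in the adapted frames. The genuinely delicate point is the behaviour along the collapsing fibre directions $E_{a}$ in the foliated-cusp case: a crude scaling estimate there is on the verge of producing a bounded, rather than an $O(\varepsilon)$, contribution, so one must use both the exact vanishing of $C$ on $\{x=\varepsilon\}$ and the precise $d$-structure (including the correct normalisation of $\text{End}(\phantom{}^{d}TM)_{|\partial M_{\varepsilon}}$), rather than a naive bound; for the first two inclusions this difficulty does not arise because the perturbations $x^{2}B$, $x^{4}B$ carry a genuine extra power of $x$.
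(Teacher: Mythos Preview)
Your approach is correct and is essentially the same computation as the paper's, organised differently: both amount to estimating the tensor $D=\nabla-\widehat\nabla$ from the explicit metric difference $C$, tracking powers of $x$ in an adapted frame. The paper tabulates Christoffel symbols directly in the frame $\{x^{2}\partial_{x},x\partial_{i},\partial_{a}\}$ (resp.\ a $\hat g_{d}$-orthonormal frame) and observes that the $O(1)$ entries of $\phantom{}^{\phi}\omega-\phantom{}^{\phi}\hat\omega$ are all multiples of $dx$, hence die upon pullback; you reach the same conclusion via the Koszul-type identity, which is a cleaner bookkeeping device and makes the first two inclusions almost immediate since $C=x^{2}B$ has $\phi$-frame components uniformly $O(x^{2})$.

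One point in your third inclusion deserves to be written out rather than asserted. You correctly reduce to the single quantity $(\phantom{}^{\varepsilon}\hat\nabla_{E_{0}}C)(X,Z)\big|_{x=\varepsilon}=2\varepsilon^{2}\,\kappa(X,Z)$ for $X,Z$ tangent, but if both slots are taken in the $d$-orthonormal fibre directions $E_{a}\sim\varepsilon^{-1}\partial_{a}$ this is $2\kappa_{ab}=O(1)$, not $O(\varepsilon)$. The statement of the lemma, however, measures the $1$-form factor in $T^{*}\partial M_{\varepsilon}$ (coordinate coframe $dy^{i},dz^{a}$), \emph{not} in $\phantom{}^{d}T^{*}M$: the paper's own formula $(\phantom{}^{d}\hat\omega-\phantom{}^{\varepsilon}\hat\omega)^{0}_{\alpha}\big|_{x=\varepsilon}=-\varepsilon\,e^{a}_{\alpha}\kappa_{a\beta}\,dx^{\beta}+O(\varepsilon^{2})$ makes this explicit. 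Converting your $D_{E_{a}}$ to $D_{\partial_{a}}=\varepsilon\,D_{E_{a}}$ recovers the missing factor of $\varepsilon$. You allude to this in your last paragraph (``the correct normalisation of $\text{End}(\phantom{}^{d}TM)_{|\partial M_{\varepsilon}}$''), but the actual mechanism is the mismatch between the coordinate coframe on $\partial M_{\varepsilon}$ and the $d$-coframe in the fibre directions; spelling this out would close the only gap.
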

\begin{rem*}
In all the proofs of this appendix, we will focus on the dependence
on $x$ of the objects involved rather than giving precise expressions.
To obtain the entries of the connection 1-forms, one first needs to
determine the Christoffel symbols of the metrics at hand. The general
procedure for computing these is as follows: Since $\phantom{}^{\phi}TM$
is isomorphic to $TM$ for $x\ne0$, the Christoffel symbols $\Gamma_{\alpha\beta}^{\mu}$
of $g_{\phi}$ with respect to the basis $\{\partial_{\alpha}\}$
are computed by means of the usual formula
\[
\Gamma_{\beta\mu}^{\alpha}=\frac{(g_{\phi}^{-1})^{\alpha\nu}}{2}\left[-\partial_{\nu}(g_{\phi})_{\beta\mu}+\partial_{\beta}(g_{\phi})_{\mu\nu}+\partial_{\mu}(g_{\phi})_{\nu\beta}\right],
\]
and then re-expressed in the basis $\{x^{2}\partial_{x},x\partial_{i},\partial_{a}\}$
or in an orthonormal frame $\{x^{2}\partial_{x},xe_{i},e_{a}\}$ using
the appropriate transformation rule ($\Gamma_{\beta\mu}^{\alpha}$
are not components of a tensor). To obtain the Christoffel symbols
of a $d$-metric, one uses the fact that $g_{d}$ and $g_{\phi}$
are related by a conformal rescaling. For instance, the covariant
derivatives of product-type $\phi$- and $d$-metrics are related
by:
\[
\phantom{}^{d}\widetilde{\nabla}_{X}Y=\frac{1}{x}\cdot\left[\phantom{}^{\phi}\widetilde{\nabla}_{X}(x\cdot Y)\right]+\left(\langle\frac{dx}{x},Y\rangle X-\tilde{g}_{\phi}(X,xY)\cdot\tilde{g}_{\phi}^{-1}\left(\frac{dx}{x^{2}},\cdot\right)\right)
\]
for all $X\in\mathfrak{X}(M)\text{ and }Y\in\Gamma(\phantom{}^{d}TM)=x^{-1}\cdot\Gamma(\phantom{}^{\phi}TM)$,
and one has analogous equations for perturbed metrics.\end{rem*}
\begin{proof}
Let $\Gamma_{\beta\mu}^{\alpha}$ and $\widehat{\Gamma}_{\beta\mu}^{\alpha}$
be the Christoffel symbols of $g_{\phi}$ and $\hat{g}_{\phi}$ in
the basis $\{x^{2}\partial_{x},x\partial_{i},\partial_{a}\}$. A direct
computation yields:
\begin{equation}
(\phantom{}^{\phi}\omega-\phantom{}^{\phi}\hat{\omega})=\left[\left(\Gamma_{\beta\mu}^{\alpha}-\widehat{\Gamma}_{\beta\mu}^{\alpha}\right)dx^{\mu}\right]=\left(\begin{array}{ccc}
0 & 0 & f_{a}^{0}dx\\
0 & 0 & f_{a}^{i}dx\\
f_{0}^{a}dx & f_{i}^{a}dx & f_{b}^{a}dx
\end{array}\right)+x\cdot E,\label{delta_phi}
\end{equation}
where $E\in\Omega^{1}(M,\text{End}(^{\phi}TM))$ and $f_{\beta}^{\alpha}\in\mathcal{C}^{\infty}(M)$
are of order $0$ in $x$, and where we have used the following convention
for the entries of the connection 1-forms:
\[
\phantom{}^{\phi}\omega_{\beta}^{\alpha}=\big<\frac{dx^{\alpha}}{x^{j_{\alpha}}},\phantom{}^{\phi}\nabla_{\partial_{\mu}}(x^{j_{\beta}}\partial_{\beta})\big>\cdot dx^{\mu}=\Gamma_{\beta\mu}^{\alpha}\cdot dx^{\mu},
\]
with $x^{j_{\alpha}}\partial_{\alpha}\text{ and }(dx^{\alpha}/x^{j_{\alpha}})$
being shorthands that designate the fields $x^{2}\partial_{x},x\partial_{i},\partial_{a}\in\Gamma(\phantom{}^{\phi}TM)$
and $(dx/x^{2}),(dy^{i}/x),dz^{a}\in\Gamma(\phantom{}^{\phi}T^{*}M)$.
By restricting to $\partial M_{\varepsilon}=\{x=\varepsilon\}$ in
equation (\ref{delta_phi}), we omit the terms in $dx$ to find that
indeed
\[
(\phantom{}^{\phi}\omega-\phantom{}^{\phi}\hat{\omega})_{|\partial M_{\varepsilon}}\in\varepsilon\cdot\Omega^{1}\left(\partial M_{\varepsilon},\text{End}\left(\phantom{}^{\phi}TM\right)|_{\partial M_{\varepsilon}}\right).
\]
The same computation applied to a $d$-metric proves the second claim,
namely that
\[
(\phantom{}^{d}\omega-\phantom{}^{d}\hat{\omega})_{|\partial M_{\varepsilon}}\in\varepsilon\cdot\Omega^{1}\left(\partial M_{\varepsilon},\text{End}\left(\phantom{}^{d}TM\right)|_{\partial M_{\varepsilon}}\right).
\]
Finally, in a $\hat{g}_{d}$-orthonormal frame $\{x\partial_{x},e_{i},\frac{1}{x}e_{a}\}$
with transition matrix
\[
\Lambda=\left(\begin{array}{ccc}
x & 0 & 0\\
0 & e_{j}^{i} & e_{a}^{i}\\
0 & \frac{1}{x}e_{i}^{a} & \frac{1}{x}e_{b}^{a}
\end{array}\right),
\]
we find that the only nonvanishing entries of $(\phantom{}^{d}\hat{\omega}-\phantom{}^{\varepsilon}\hat{\omega})_{|x=\varepsilon}$
are: 
\[
(\phantom{}^{d}\hat{\omega}-\phantom{}^{\varepsilon}\hat{\omega})_{\alpha}^{0}|_{x=\varepsilon}=-(\phantom{}^{d}\hat{\omega}-\phantom{}^{\varepsilon}\hat{\omega})_{0}^{\alpha}|_{x=\varepsilon}=-\varepsilon\cdot e_{\alpha}^{a}(\kappa_{a\beta})_{|x=\varepsilon}\cdot dx^{\beta}+O(\varepsilon^{2})\text{, }\forall\alpha,\beta\ne0
\]
and the third claim follows.
\end{proof}
The next proposition is used to prove lemma \ref{EtaRhoLmma}:
\begin{prop}
\label{Prop_Symm_bdry_cond}Considering a product metric $\tilde{g}_{\phi}=(dx/x^{2})^{2}+(\phi^{*}h/x^{2})+\tau$
and the asymptotic metric,
\[
\hat{g}_{\phi}=\frac{dx^{2}}{x^{4}}+\frac{\phi^{*}h}{x^{2}}+\kappa=\tilde{g}_{\phi}+xA\text{, }
\]
suppose that $\forall Y\in\mathfrak{X}(N)$ and $\forall Z,V\in\Gamma(M,T^{V}W)$,
the tensor $\kappa=\tau+xA\in\Gamma(W,S^{2}T^{*}W)$ satisfies the
identity:
\begin{equation}
\left(Z\cdot\kappa(V,Y^{H})-V\cdot\kappa(Z,Y^{H})-\kappa([Z,V],Y^{H})\right)_{|x=0}=0,\label{Eq_Symm_bdry_cond}
\end{equation}
where $Y^{H}\in\Gamma(M,T^{H}W)$ is the lift of $Y$. Then the difference
of connection 1-forms $(\phantom{}^{\phi}\hat{\omega}-\phantom{}^{\phi}\widetilde{\omega})$
is such that: 
\[
(\phantom{}^{\phi}\hat{\omega}-\phantom{}^{\phi}\widetilde{\omega})|_{\partial M_{\varepsilon}}\in\varepsilon\cdot\Omega^{1}\left(\partial M_{\varepsilon},\text{End}\left(\phantom{}^{\phi}TM\right)|_{\partial M_{\varepsilon}}\right).
\]
\end{prop}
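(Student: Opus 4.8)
The plan is to reduce the statement, as in the proof of lemma~\ref{DiffConnectForms}, to an estimate on Christoffel symbols in the frame $\{x^{2}\partial_{x},x\partial_{i},\partial_{a}\}$ of ${}^{\phi}TM$, which is common to $\tilde{g}_{\phi}$ and $\hat{g}_{\phi}$. In this frame the connection $1$-forms are represented by the Christoffel matrices $(\widehat{\Gamma}^{\alpha}_{\beta\mu}\,dx^{\mu})$ and $(\widetilde{\Gamma}^{\alpha}_{\beta\mu}\,dx^{\mu})$, and restricting to $\partial M_{\varepsilon}=\{x=\varepsilon\}$ amounts to dropping the $dx^{0}=dx$ entries; so it suffices to prove that the remaining entries of $\widehat{\Gamma}^{\alpha}_{\beta\mu}-\widetilde{\Gamma}^{\alpha}_{\beta\mu}$ (for $\mu\neq 0$) are $O(x)$. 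I would compute this difference from the exact variation formula for Levi-Civita connections,
\[
2\,\hat{g}_{\phi}\!\left(\widehat{\nabla}_{X}Y-\widetilde{\nabla}_{X}Y,\;Z\right)=(\widetilde{\nabla}_{X}k)(Y,Z)+(\widetilde{\nabla}_{Y}k)(X,Z)-(\widetilde{\nabla}_{Z}k)(X,Y),
\]
applied to $k:=\hat{g}_{\phi}-\tilde{g}_{\phi}=xA$ and to the frame vectors; since $\hat{g}_{\phi}$ is a genuine metric on ${}^{\phi}TM$, the factor $\hat{g}_{\phi}^{-1}$ only contributes bounded entries in this frame and may be ignored for the order count.

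The heart of the bookkeeping is that, in the $\phi$-frame, the conditions $A(x^{2}\partial_{x},\cdot)\equiv 0$ and $A(x\partial_{i},x\partial_{j})=O(x)$ force every component of $k=xA$ to be $O(x)$, while the Christoffel symbols $\widetilde{\Gamma}$ of the product metric $\tilde{g}_{\phi}$ are smooth up to $x=0$. Hence every term built from $k$, from $\widetilde{\nabla}k$ and from $\widetilde{\Gamma}\cdot k$ is manifestly $O(x)$, with a single caveat: the ``vertical--horizontal'' contribution, that is, the terms in which a vertical field $V\in\Gamma(T^{V}W)$ differentiates $k$ evaluated on a vertical and a horizontal argument. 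There one meets $V\cdot k(V',Y^{H})=V\cdot\big(x\,A(V',Y^{H})\big)=V\cdot\kappa(V',Y^{H})$; because the $\phi$-covector $A(V',\cdot)$ paired with the honest horizontal lift $Y^{H}$ carries a hidden $x^{-1}$, this quantity is only $O(1)$ and need not vanish at $x=0$.

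So the crux is to identify the $x=0$ value of this leftover part of $\widehat{\Gamma}-\widetilde{\Gamma}$. Unwinding the variation formula on two vertical fields $Z,V$ and a horizontal lift $Y^{H}$, and using that the product metric satisfies $\tilde{g}_{\phi}(V,Y^{H})=0$ (so that $k(\cdot,Y^{H})=\kappa(\cdot,Y^{H})$ on $T^{V}W$) together with the fact that the restriction of $\kappa$ to a pair of vertical vectors reduces at $x=0$ to the fibre metric $\tau$ --- which is symmetric, hence cancels once combined with the connection terms of $\tilde{g}_{\phi}$ --- the leftover reduces at $x=0$ to the fibrewise exterior derivative
\[
Z\cdot\kappa(V,Y^{H})-V\cdot\kappa(Z,Y^{H})-\kappa([Z,V],Y^{H}).
\]
Hypothesis~(\ref{Eq_Symm_bdry_cond}) is precisely the vanishing of this expression at $x=0$; since all the tensors in sight depend smoothly on $x$, a first-order Taylor expansion upgrades ``vanishes at $x=0$'' to ``$O(x)$''. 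Collecting the estimates yields $\widehat{\Gamma}^{\alpha}_{\beta\mu}-\widetilde{\Gamma}^{\alpha}_{\beta\mu}=O(x)$ for all $\mu\neq 0$, and restricting to $\{x=\varepsilon\}$ (which removes the $dx$-entries) gives $({}^{\phi}\hat{\omega}-{}^{\phi}\widetilde{\omega})|_{\partial M_{\varepsilon}}\in\varepsilon\cdot\Omega^{1}\big(\partial M_{\varepsilon},\operatorname{End}({}^{\phi}TM)|_{\partial M_{\varepsilon}}\big)$.

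The step I expect to be the main obstacle is exactly this last identification: singling out, among the many weighted Christoffel entries, the unique block whose $x=0$ value is obstructed, and recognising that the obstruction is precisely the alternating vertical derivative of $\kappa(\cdot,Y^{H})$ that appears in~(\ref{Eq_Symm_bdry_cond}). The appearance of the honest horizontal lift $Y^{H}$ (rather than a coordinate vector) is what forces the invariant formulation of the hypothesis and is easy to mishandle in a purely coordinate computation.
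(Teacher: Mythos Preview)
Your proposal is correct and follows essentially the same route as the paper: compute the difference $\tilde\theta={}^{\phi}\hat\omega-{}^{\phi}\widetilde\omega$ in a $\phi$-adapted frame, observe that all entries are $O(x)$ except for a block governed by the antisymmetrised vertical derivative $(\partial_a\kappa_{bj}-\partial_b\kappa_{aj})|_{x=0}$, and apply hypothesis~(\ref{Eq_Symm_bdry_cond}) to kill it. The paper does this in a $\hat g_\phi$-orthonormal frame by a direct Christoffel computation rather than via the Koszul variation formula, but the substance is identical.
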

\begin{proof}
Define $\tilde{\theta}:=(\phantom{}^{\phi}\hat{\omega}-\phantom{}^{\phi}\widetilde{\omega})$,
and write 
\[
\kappa_{ab}=\tau{}_{ab}+x\cdot A_{ab}.
\]
Now consider a local orthonormal frame $\{x^{2}\partial_{x},xe_{i},e_{a}\}\subset\Gamma(^{\phi}TM)$
for $\hat{g}_{\phi}$ with transition matrix
\[
\Lambda=\left(\begin{array}{ccc}
x^{2} & 0 & 0\\
0 & xe_{j}^{i} & xe_{a}^{i}\\
0 & e_{i}^{a} & e_{b}^{a}
\end{array}\right).
\]
A direct computation of the entries of $\tilde{\theta}$ yields that:
\begin{equation}
\tilde{\theta}_{|x=\varepsilon}=\left(\begin{array}{ccc}
0 & 0 & 0\\
0 & \tilde{\theta}_{j}^{i}|_{x=\varepsilon} & \tilde{\theta}_{a}^{i}|_{x=\varepsilon}\\
0 & -\tilde{\theta}_{a}^{i}|_{x=\varepsilon} & 0
\end{array}\right)+\varepsilon\cdot E,\label{Eq_theta_tilde_bndry_1}
\end{equation}
with $E\in\Omega^{1}\left(\partial M_{\varepsilon},\text{End}\left(\phantom{}^{\phi}TM\right)|_{\partial M_{\varepsilon}}\right)$,
and such that for some $C\in\Omega^{1}(\partial M_{\varepsilon})$
and any $\alpha\ne0$: 
\begin{equation}
\tilde{\theta}_{\alpha}^{i}|_{x=\varepsilon}=\frac{1}{2}\Lambda_{i}^{a}\Lambda_{\alpha}^{b}\left(-\partial_{a}\kappa_{bj}+\partial_{b}\kappa_{ja}+\varepsilon\cdot\partial_{j}A_{ab}\right)dy^{j}+\varepsilon\cdot C.\label{Eq_theta_tilde_bndry_2}
\end{equation}

In local coordinates, equation (\ref{Eq_Symm_bdry_cond}) becomes:
\[
\left(\kappa([Z,V],Y^{H})-\left[Z\cdot\kappa(V,Y^{H})-V\cdot\kappa(Z,Y^{H})\right]\right)_{|x=0}=Z^{a}V^{b}Y^{j}(-\partial_{a}\kappa_{bj}+\partial_{b}\kappa_{ja})_{|x=0}=0,
\]
for any $Z^{a},V^{b}\in\mathcal{C}^{\infty}(F)\text{ and }Y^{j}\in\mathcal{C}^{\infty}(N)$,
which simply means that $\forall j\in1,\cdots,\dim N$ and $\forall a,b\in\{1,\cdots,\dim F\}$,
we have $(\partial_{a}\kappa_{bj}-\partial_{b}\kappa_{aj})=O(x)$,
and we get $\tilde{\theta}_{|x=\varepsilon}\in\varepsilon\cdot\Omega^{1}\left(\partial M_{\varepsilon},\text{End}\left(\phantom{}^{\phi}TM\right)|_{\partial M_{\varepsilon}}\right)$.\end{proof}
\begin{rem*}
The condition $A(x\partial_{i},x\partial_{j})=O(x)$ on the tensor
$A\in\Gamma(M,S^{2}[^{\phi}T^{*}M])$ is necessary for the proof above
to work. If $A(x\partial_{i},x\partial_{j})=O(x^{0})$, on the one
hand $\tilde{\theta}_{|x=\varepsilon}\ne O(\varepsilon)$, but more
importantly, we can't apply the Atiyah-Patodi-Singer theorem to $(M_{\varepsilon},\hat{g}_{\varepsilon})$
anymore.
\end{rem*}
A central object in the upcoming computations is the restriction to
$\partial M_{\varepsilon}$ of the curvature $^{\varepsilon}\Omega$
associated to the auxiliary metric $g_{\varepsilon}$. We have the
following fact:
\begin{lem}
\label{Curvature_g_epsilon}Let $^{h}\Omega\in\Omega^{2}(N,\text{End }TN)$
be the connection 2-form of the metric $h$ on the base space $N$,
and let $^{\kappa}\Omega(y)$ be the curvature 2-form of the metric
$\kappa_{|TF_{y}}$ on the fibre $F_{y}=\phi^{-1}(\{y\})\subset W$.
Then:\textup{
\[
^{\varepsilon}\Omega|_{\partial M_{\varepsilon}}=\left(\begin{array}{ccc}
0 & 0 & 0\\
0 & \phi^{*}\left(\phantom{}{}^{h}\Omega\right) & 0\\
0 & 0 & ^{\kappa}\Omega(y)+\alpha(y)
\end{array}\right)+\varepsilon\cdot\left(\begin{array}{cc}
0 & 0\\
0 & E
\end{array}\right),
\]
}where\textup{ }$E\in\Omega^{2}(\partial M_{\varepsilon},\text{End}(T\partial M_{\varepsilon}))$
and $\alpha(y)=\alpha_{ia}dy^{i}\wedge dz^{a}$ locally.\end{lem}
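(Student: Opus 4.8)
The plan is to remove the boundary-defining direction via a product structure and then extract the remaining blocks from O'Neill's curvature formulas for a Riemannian submersion (see e.g.\ \cite{[B2]}) while tracking the powers of $\varepsilon$; an equivalent elementary route is the direct Christoffel-symbol computation indicated in the remark accompanying Lemma \ref{DiffConnectForms}.

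First, on the collar the auxiliary metric is $g_{\varepsilon}=\frac{dx^{2}}{\varepsilon^{4}}+g_{W,\varepsilon}$ with $g_{W,\varepsilon}:=\frac{\phi^{*}h}{\varepsilon^{2}}+\tau$ independent of $x$, hence a Riemannian product of a flat half-line with $(W,g_{W,\varepsilon})$. Therefore ${}^{\varepsilon}\nabla$ is the product connection: it preserves $\langle\partial_{x}\rangle\oplus TW$ and is flat in the $\partial_{x}$ direction, so ${}^{\varepsilon}\Omega|_{\partial M_{\varepsilon}}$ has no $dx$ factor and no index in the $0$-slot. This produces the first row and column of zeros, and reduces the problem to computing the curvature $2$-form of $(W,g_{W,\varepsilon})$, restricted to $\{x=\varepsilon\}$, in the orthonormal frame $\{x^{2}\partial_{x},xe_{i},e_{a}\}$ of $g_{\varepsilon}$, with $\{e_{i}\}$ a horizontal lift of an $h$-orthonormal frame and $\{e_{a}\}$ a vertical $\tau$-orthonormal frame.

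Since $\phi^{*}h$ kills $T^{V}W$ and $\tau$ kills $T^{H}W$, the splitting $TW=T^{H}W\oplus T^{V}W$ is $g_{W,\varepsilon}$-orthogonal for every $\varepsilon$, so $\phi:(W,g_{W,\varepsilon})\to(N,\varepsilon^{-2}h)$ is a Riemannian submersion whose horizontal distribution and whose fibre metric $\tau|_{TF_{y}}$ are $\varepsilon$-independent. One then expands ${}^{\varepsilon}\Omega$ through O'Neill's identities, in which the integrability tensor and the second fundamental form of the fibres enter. The controlling fact is that $g_{W,\varepsilon}^{-1}$ scales the horizontal block by $\varepsilon^{2}$ and leaves the vertical block of order $1$ (each raised horizontal index costs $\varepsilon^{2}$, each horizontal covector a factor $\varepsilon^{-1}$). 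Hence the horizontal block of ${}^{\varepsilon}\Omega$ is the curvature $2$-form of $(N,\varepsilon^{-2}h)$, which as an $\mathrm{End}$-valued $2$-form is $\phi^{*}({}^{h}\Omega)$, up to O'Neill terms that, after normalising their horizontal indices, are $O(\varepsilon)$; both off-diagonal horizontal-vertical blocks are $O(\varepsilon)$ for the same reason, each carrying a raised horizontal index; and the vertical block is, by the Gauss equation for $F_{y}\hookrightarrow(W,g_{W,\varepsilon})$, the intrinsic curvature ${}^{\kappa}\Omega(y)$ (second-fundamental-form corrections being $O(\varepsilon)$) together with a surviving first-order term $\alpha(y)$ coming from the variation of the fibration data over the base (locally $\alpha_{ia}\,dy^{i}\wedge dz^{a}$, as in the statement). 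Restricting to $\{x=\varepsilon\}$ and collecting the $O(\varepsilon)$ pieces into one $E\in\Omega^{2}(\partial M_{\varepsilon},\mathrm{End}(T\partial M_{\varepsilon}))$ gives the claimed decomposition.

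The delicate part, which I expect to be the main obstacle, is precisely this $\varepsilon$-accounting for the vertical and off-diagonal blocks: deciding which O'Neill (equivalently, Christoffel) contributions persist as $\varepsilon\to0$ and which decay, and pinning down the limit term $\alpha(y)$. On the Christoffel side the pattern is transparent: after passing to the frame $\{x^{2}\partial_{x},x\partial_{i},\partial_{a}\}$ and setting $x=\varepsilon$, the base symbols $\Gamma^{k}_{ij}$ stay of order $\varepsilon^{0}$ because the $\varepsilon^{-2}$ from $g^{kl}$ cancels the one from $\partial h$ (producing $\phi^{*}({}^{h}\Omega)$), the fibrewise symbols $\Gamma^{a}_{bc}$ give ${}^{\kappa}\Omega(y)$, the symbols mixing one base and two fibre indices stay of order $\varepsilon^{0}$ and feed $\alpha(y)$, while the remaining mixed symbols become $O(\varepsilon^{2})$ after the change of frame; assembling ${}^{\varepsilon}\Omega=d\,{}^{\varepsilon}\omega+{}^{\varepsilon}\omega\wedge{}^{\varepsilon}\omega$ and restricting then reproduces the stated block form.
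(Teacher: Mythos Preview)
Your proposal is correct and reaches the same conclusion as the paper, but the paper proceeds entirely along your second (``equivalent elementary'') route rather than through O'Neill's formulas. The paper's proof simply writes out the connection $1$-form ${}^{\varepsilon}\omega$ of $g_{\varepsilon}$ in the orthonormal frame $\{\varepsilon^{2}\partial_{x},\varepsilon e_{i},e_{a}\}$, reads off directly that ${}^{\varepsilon}\omega^{0}_{\alpha}=0$, ${}^{\varepsilon}\omega^{i}_{j}=\Gamma^{i}_{jk}dy^{k}+O(\varepsilon)$, ${}^{\varepsilon}\omega^{a}_{b}=\Gamma^{a}_{bc}(y)dz^{c}+O(\varepsilon)$ and ${}^{\varepsilon}\omega^{i}_{a}=O(\varepsilon)$, and then applies the structure equation ${}^{\varepsilon}\Omega=d\,{}^{\varepsilon}\omega+{}^{\varepsilon}\omega\wedge{}^{\varepsilon}\omega$. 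Your O'Neill argument supplies a more geometric explanation of why the $\varepsilon$-scaling works (each raised horizontal index contributes a factor $\varepsilon^{2}$), while the paper's bare-hands computation is more self-contained and avoids invoking the submersion curvature identities.

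One small correction to your Christoffel sketch: the surviving mixed term $\alpha(y)$ in the vertical block does \emph{not} come from Christoffel symbols with one base and two fibre indices---those are in fact $O(\varepsilon)$ in the paper's frame, appearing as the $\varepsilon\gamma^{a}_{bk}dy^{k}$ piece of ${}^{\varepsilon}\omega^{a}_{b}$. Rather, $\alpha(y)=\partial_{i}\Gamma^{a}_{bc}(y)\,dy^{i}\wedge dz^{c}$ arises from the exterior derivative $d(\Gamma^{a}_{bc}(y)dz^{c})$ in the Maurer--Cartan equation, i.e.\ from the $y$-dependence of the purely fibrewise symbols. This matches your earlier and more accurate description of $\alpha$ as coming from the ``variation of the fibration data over the base.''
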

\begin{proof}
First, regarding $\phantom{}^{\varepsilon}\nabla:\Gamma(TM)\rightarrow\Gamma(T^{*}M\otimes TM)$,
we have
\[
\langle dx^{0},\phantom{}^{\varepsilon}\nabla_{\partial_{\beta}}\partial_{\alpha}\rangle=\langle dx^{\mu},\phantom{}^{\varepsilon}\nabla_{\partial_{0}}\partial_{\alpha}\rangle=0,
\]
\[
\langle dx^{\mu},\phantom{}^{\varepsilon}\nabla_{\partial_{\beta}}\partial_{\alpha}\rangle=\left(\mathring{\widetilde{\Gamma}}_{\alpha\beta}^{\mu}\right)_{|x=\varepsilon}\text{, }\forall\mu,\alpha,\beta\ne0.
\]
Once we express $^{\varepsilon}\omega$ in a $g_{\varepsilon}$-orthonormal
frame $\{\varepsilon^{2}\partial_{x},\varepsilon e_{i},e_{a}\}$ near
the boundary $W$, we find:
\[
\phantom{}^{\varepsilon}\omega_{j}^{i}=\Gamma_{jk}^{i}dy^{k}+\varepsilon\gamma_{ja}^{i}dz^{a}\text{, }\phantom{}^{\varepsilon}\omega_{b}^{a}=\varepsilon\gamma_{bk}^{a}dy^{k}+\Gamma_{bc}^{a}(y)dz^{c},
\]
\[
\phantom{}^{\varepsilon}\omega_{a}^{i}=-\phantom{}^{\varepsilon}\omega_{i}^{a}=\varepsilon\gamma_{ib}^{a}dz^{b}\text{, }\phantom{}^{\varepsilon}\omega_{\alpha}^{0}=-\phantom{}^{\varepsilon}\omega_{0}^{\alpha}=0,
\]
with $\gamma_{\beta\mu}^{\alpha}\in\mathcal{C}^{\infty}(M)$ of order
$0$ in $\varepsilon$. We observe that $\Gamma_{jk}^{i}dy^{k}$ give
the entries of the connection 1-form associated to the metric $h$
on $N$, while $\Gamma_{bc}^{a}(y)dz^{c}$ are those of the Levi-Civita
connection form of the metric $\kappa_{|TF_{y}}$on the fibre $F_{y}=\phi^{-1}(\{y\})$.
Using the Maurer-Cartan equation
\[
\phantom{}^{\varepsilon}\Omega=d\phantom{}^{\varepsilon}\omega+\phantom{}^{\varepsilon}\omega\wedge\phantom{}^{\varepsilon}\omega,
\]
one obtains the stated result: 
\[
\phantom{}^{\varepsilon}\Omega_{j}^{i}=\phantom{}^{h}\Omega_{j}^{i}+O(\varepsilon)\text{, }\phantom{}^{\varepsilon}\Omega_{a}^{i}=-\phantom{}^{\varepsilon}\Omega_{i}^{a}=O(\varepsilon),
\]
\[
\phantom{}^{\varepsilon}\Omega_{b}^{a}=\phantom{}^{\kappa}\Omega_{b}^{a}(y)+\partial_{i}\Gamma_{bc}^{a}(y)dy^{i}\wedge dz^{c}+O(\varepsilon),
\]
where:
\begin{align*}
\phantom{}^{h}\Omega_{j}^{i} & =(\partial_{l}\Gamma_{jk}^{i}+\Gamma_{ls}^{i}\Gamma_{jk}^{s})dy^{l}\wedge dy^{k},\\
\phantom{}^{\kappa}\Omega_{b}^{a}(y) & =(\partial_{c}\Gamma_{bd}^{a}(y)+\Gamma_{cf}^{a}(y)\Gamma_{bd}^{f}(y))dz^{c}\wedge dz^{d}.
\end{align*}

\end{proof}

\subsection{Vanishing of Chern-Simons terms: }

The first result here is valid for arbitrary even dimensions $\dim M=2m$.
With the same notations as in lemma \ref{DiffConnectForms}, we have:
\begin{prop}
\label{Prop_CS_prod_exact} Let $\hat{g}_{\phi}$ and $\hat{g}_{d}=x^{2}\hat{g}_{\phi}$
be asymptotic metrics, and for some $B\in\Gamma(M,S^{2}[^{\phi}T^{*}M])$,
consider the exact metrics
\[
g_{\phi}=\hat{g}_{\phi}+x^{2}B\text{ and }g_{d}=\hat{g}_{d}+x^{4}B,
\]
and an asymptotic auxiliary metric $\hat{g}_{\varepsilon}$ on $TM$
such that $(\hat{g}_{\varepsilon})_{|\partial M_{\varepsilon}}\equiv(\hat{g}_{\phi})_{|\partial M_{\varepsilon}}$.
Then for a given invariant polynomial $P\in S^{m}(\mathfrak{so}_{2m}^{*}(\mathbb{R}))$,
the Chern-Simons boundary correction to $P$ obtained from an exact
or an asymptotic $\phi$-metric are the same:
\[
\lim_{\varepsilon\rightarrow0}\int_{\partial M_{\varepsilon}}TP(M,\phantom{}^{\varepsilon}\hat{\nabla},\phantom{}^{\phi}\nabla)=\lim_{\varepsilon\rightarrow0}\int_{\partial M_{\varepsilon}}TP(M,\phantom{}^{\varepsilon}\hat{\nabla},\phantom{}^{\phi}\hat{\nabla}),
\]
and these corrections vanish in the case of $d$-metrics:
\[
\lim_{\varepsilon\rightarrow0}\int_{\partial M_{\varepsilon}}TP(M,\phantom{}^{\varepsilon}\hat{\nabla},\phantom{}^{d}\nabla)=\lim_{\varepsilon\rightarrow0}\int_{\partial M_{\varepsilon}}TP(M,\phantom{}^{\varepsilon}\hat{\nabla},\phantom{}^{d}\hat{\nabla})=0.
\]
\end{prop}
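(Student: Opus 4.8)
The plan is to treat the $\phi$-metrics and the $d$-metrics by genuinely different arguments, the dividing point being that the hypersurface $\partial M_{\varepsilon}=\{x=\varepsilon\}$ has $\hat{g}_{\phi}$-volume blowing up like $\varepsilon^{-\dim\Sigma}$ while its $\hat{g}_{d}$-volume \emph{decays} like $\varepsilon^{\dim F}$ with $\dim F>0$. Everything below is uniform in the chosen invariant polynomial $P$, since only the multilinearity of the polarization $\tilde P$ and the usual transgression identities are used.

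For the $\phi$-metrics I would first invoke the cocycle identity for Chern--Simons transgression forms: for any three connections on a fixed bundle, $TP(\nabla_{0},\nabla_{2})-TP(\nabla_{0},\nabla_{1})-TP(\nabla_{1},\nabla_{2})$ is exact. Applying it with $\nabla_{0}={}^{\varepsilon}\hat{\nabla}$, $\nabla_{1}={}^{\phi}\hat{\nabla}$, $\nabla_{2}={}^{\phi}\nabla$ and integrating over the \emph{closed} manifold $\partial M_{\varepsilon}$ kills the exact term, so that the claimed equality reduces to $\lim_{\varepsilon\to 0}\int_{\partial M_{\varepsilon}}TP(M,{}^{\phi}\hat{\nabla},{}^{\phi}\nabla)=0$. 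Now ${}^{\phi}\hat{\nabla}$ and ${}^{\phi}\nabla$ are both \emph{smooth} connections on the rescaled bundle ${}^{\phi}T\overline{M}$ over the \emph{compact} manifold $\overline{M}$, so $TP(M,{}^{\phi}\hat{\nabla},{}^{\phi}\nabla)$ is a smooth form on $\overline{M}$, with $dTP(M,{}^{\phi}\hat{\nabla},{}^{\phi}\nabla)=P(M,{}^{\phi}\nabla)-P(M,{}^{\phi}\hat{\nabla})$ also smooth on $\overline{M}$. Stokes' theorem on $M_{\varepsilon}$ then gives $\int_{\partial M_{\varepsilon}}TP(M,{}^{\phi}\hat{\nabla},{}^{\phi}\nabla)=\int_{M_{\varepsilon}}\big(P(M,{}^{\phi}\nabla)-P(M,{}^{\phi}\hat{\nabla})\big)$, which as $\varepsilon\to 0$ converges to $\int_{\overline{M}}\big(P(M,{}^{\phi}\nabla)-P(M,{}^{\phi}\hat{\nabla})\big)$ because the integrand is smooth on the compact $\overline{M}$; and Stokes on $\overline{M}$ rewrites this as $\int_{W}TP(M,{}^{\phi}\hat{\nabla},{}^{\phi}\nabla)|_{W}$. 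Finally, equation (\ref{delta_phi}) in the proof of Lemma \ref{DiffConnectForms} shows that the order-zero part of ${}^{\phi}\omega-{}^{\phi}\hat{\omega}$ is proportional to $dx$ and the remainder is $O(x)$, hence this difference pulls back to $0$ on $W=\{x=0\}$; since the connection difference enters $TP$ linearly, $TP(M,{}^{\phi}\hat{\nabla},{}^{\phi}\nabla)|_{W}=0$, and the limit vanishes.

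For the $d$-metrics I would argue by a direct estimate, exploiting the volume decay. By the identity $(\varepsilon^{2}\hat{g}_{\varepsilon})|_{\partial M_{\varepsilon}}=\hat{g}_{d}|_{\partial M_{\varepsilon}}$ recorded in the proof of Theorem \ref{Index}, the connections ${}^{\varepsilon}\hat{\nabla}$, ${}^{d}\hat{\nabla}$ and ${}^{d}\nabla$ may all be written in one common $\hat{g}_{d}$-orthonormal frame along $\partial M_{\varepsilon}$. The second and third statements of Lemma \ref{DiffConnectForms} give $({}^{d}\omega-{}^{\varepsilon}\hat{\omega})|_{\partial M_{\varepsilon}}=O(\varepsilon)$ and $({}^{d}\hat{\omega}-{}^{\varepsilon}\hat{\omega})|_{\partial M_{\varepsilon}}=O(\varepsilon)$ in this frame, while the curvature estimates of Lemma \ref{Curvature_g_epsilon} (applied to $\hat{g}_{\varepsilon}$) bound the relevant curvature $2$-forms uniformly in $\varepsilon$; combining these with the algebraic formula $\Omega_{t}=(1-t)\Omega_{0}+t\Omega_{1}-t(1-t)(\omega_{1}-\omega_{0})\wedge(\omega_{1}-\omega_{0})$ for the interpolating curvature, one sees that $TP(M,{}^{\varepsilon}\hat{\nabla},{}^{d}\nabla)$ and $TP(M,{}^{\varepsilon}\hat{\nabla},{}^{d}\hat{\nabla})$ are bounded pointwise in $\hat{g}_{d}$-norm, uniformly in $\varepsilon$, along $\partial M_{\varepsilon}$. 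Since $\mathrm{vol}(\partial M_{\varepsilon},\hat{g}_{d})=O(\varepsilon^{\dim F})$ with $\dim F>0$, both boundary integrals tend to $0$ as $\varepsilon\to 0$, establishing the last two equalities at once.

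The step I expect to be the main obstacle is the verification, in the $\phi$-case, that $TP(M,{}^{\phi}\hat{\nabla},{}^{\phi}\nabla)$ is genuinely smooth down to $x=0$ once both Levi--Civita connections are regarded on ${}^{\phi}T\overline{M}$, that its restriction to $W$ is obtained by simply restricting the connection and curvature forms, and that equation (\ref{delta_phi}) does force $({}^{\phi}\omega-{}^{\phi}\hat{\omega})|_{W}=0$; once these are granted, the remainder is two applications of Stokes' theorem together with the two opposite volume asymptotics for $\partial M_{\varepsilon}$.
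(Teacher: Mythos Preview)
Your treatment of the $\phi$-case is correct and close in spirit to the paper's: both reduce, via the additivity of transgressions on the closed hypersurface $\partial M_{\varepsilon}$, to showing that $\int_{\partial M_{\varepsilon}}TP(M,{}^{\phi}\hat{\nabla},{}^{\phi}\nabla)\to 0$, and both invoke the first statement of Lemma~\ref{DiffConnectForms}. The paper concludes directly that this integrand is $\varepsilon\cdot Q$ for a form $Q$ extending smoothly to $W$, whereas you pass through two applications of Stokes to reach $\int_{W}TP|_{W}=0$; the detour is harmless but unnecessary, since for a form smooth on $\overline{M}$ one has $\int_{\partial M_{\varepsilon}}\to\int_{W}$ by continuity alone.

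Your $d$-case argument, however, has a genuine gap: the claim that the transgression forms are ``bounded pointwise in $\hat{g}_{d}$-norm'' is not justified, and is in fact false in general. Lemma~\ref{Curvature_g_epsilon} bounds the matrix entries of ${}^{\varepsilon}\Omega$ as $2$-forms written in the coordinate basis $dy^{i},dz^{a}$; but the fibre block ${}^{\kappa}\Omega(y)$ lives in $dz^{c}\wedge dz^{d}$, and since $|dz^{a}|_{\hat{g}_{d}}=O(\varepsilon^{-1})$ on $\{x=\varepsilon\}$, its $\hat{g}_{d}$-norm is $O(\varepsilon^{-2})$. For the top-degree transgression form on $\partial M_{\varepsilon}$ the same count produces a $\hat{g}_{d}$-norm of order $\varepsilon^{-\dim F}$ (times the $O(\varepsilon)$ from the connection difference), and this is \emph{exactly} cancelled by $\mathrm{vol}(\partial M_{\varepsilon},\hat{g}_{d})=O(\varepsilon^{\dim F})$: the volume decay you invoke is not an additional gain but merely undoes the blow-up of the form norm. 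The actual mechanism for the vanishing is purely the $O(\varepsilon)$ in Lemma~\ref{DiffConnectForms}: since the connection difference is $O(\varepsilon)$ and the interpolated curvatures are $O(1)$ componentwise in the $dy,dz$ basis, the transgression form has coefficients $O(\varepsilon)$ in that basis, and one integrates this top-form directly over $\partial M_{\varepsilon}\simeq W$ (no metric or volume form enters the integral of a top-degree form) to obtain $O(\varepsilon)$. This is how the paper proceeds.
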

\begin{proof}
Recall that:
\[
P(M,\phantom{}^{\phi}\nabla)\equiv P(\underbrace{\phantom{}^{\phi}\Omega,\cdots,\phantom{}^{\phi}\Omega}_{m\text{ times}})\text{, }TP(M,\phantom{}^{\varepsilon}\hat{\nabla},\phantom{}^{\phi}\nabla)\equiv m\int_{0}^{1}dtP(\phantom{}^{\phi}\omega-\phantom{}^{\varepsilon}\hat{\omega},\underbrace{\phantom{}^{\phi}\Omega_{t},\cdots,\phantom{}^{\phi}\Omega_{t}}_{m-1\text{ times}}),
\]
where $\phantom{}^{\phi}\Omega_{t}$ is the curvature 2-form of the
interpolation connection $^{\phi}\nabla_{t}=\phantom{}^{\varepsilon}\hat{\nabla}+t(\phantom{}^{\phi}\nabla-\phantom{}^{\varepsilon}\hat{\nabla})$.
As in the proof of theorem \ref{Index}, we have 
\[
\int_{\partial M_{\varepsilon}}TP(M,\phantom{}^{\varepsilon}\hat{\nabla},\phantom{}^{\phi}\nabla)=\int_{M_{\varepsilon}}P(M,\phantom{}^{\phi}\nabla)-\int_{M_{\varepsilon}}P(M,\phantom{}^{\varepsilon}\hat{\nabla}),
\]
which leads to 
\begin{align*}
\int_{\partial M_{\varepsilon}}TP(M,\phantom{}^{\varepsilon}\hat{\nabla},\phantom{}^{\phi}\nabla) & =\int_{\partial M_{\varepsilon}}TP(M,\phantom{}^{\varepsilon}\hat{\nabla},\phantom{}^{\phi}\hat{\nabla})+\int_{\partial M_{\varepsilon}}TP(M,\phantom{}^{\phi}\hat{\nabla},\phantom{}^{\phi}\nabla)\\
 & =\int_{\partial M_{\varepsilon}}TP(M,\phantom{}^{\varepsilon}\nabla,\phantom{}^{\phi}\hat{\nabla})\\
 & \phantom{=}+\int_{\partial M_{\varepsilon}}m\left[\int_{0}^{1}P(\phantom{}^{\phi}\omega-\phantom{}^{\phi}\hat{\omega},\phantom{}^{t}\Omega,\cdots,\phantom{}^{t}\Omega)dt\right]_{|\partial M_{\varepsilon}}
\end{align*}
with $^{t}\Omega$ the curvature of the interpolation connection $t\phantom{}^{\phi}\nabla+(1-t)\phantom{}^{\phi}\hat{\nabla}$.
We note that for some $E(t)\in\Omega^{2m-2}(M_{\varepsilon})$:
\[
P(\phantom{}^{\phi}\omega-\phantom{}^{\phi}\hat{\omega},\phantom{}^{t}\Omega,\cdots,\phantom{}^{t}\Omega)\equiv P(\phantom{}^{\phi}\omega-\phantom{}^{\phi}\hat{\omega},\phantom{}^{t}\Omega)=P(\tilde{\theta},\phantom{}^{t}\Omega)_{|x=\varepsilon}+dx\wedge E(t),
\]
so that 
\[
\left[\int_{0}^{1}P(\phantom{}^{\phi}\omega-\phantom{}^{\phi}\hat{\omega},\phantom{}^{t}\Omega,\cdots,\phantom{}^{t}\Omega)dt\right]_{|\partial M_{\varepsilon}}=\int_{0}^{1}P(\tilde{\theta},\phantom{}^{t}\Omega)_{|x=\varepsilon}dt,
\]
and by lemma \ref{DiffConnectForms}: 
\[
\int_{\partial M_{\varepsilon}}TP(M,\phantom{}^{\varepsilon}\hat{\nabla},\phantom{}^{\phi}\nabla)=\int_{\partial M_{\varepsilon}}TP(M,\phantom{}^{\varepsilon}\nabla,\phantom{}^{\phi}\widetilde{\nabla})+\varepsilon\cdot\int_{\partial M_{\varepsilon}}Q.
\]
Since this is also valid for $\phantom{}^{d}\hat{\nabla}$ and $\phantom{}^{d}\nabla$,
we get:
\[
\lim_{\varepsilon\rightarrow0}\int_{\partial M_{\varepsilon}}TP(M,\phantom{}^{\varepsilon}\hat{\nabla},\phantom{}^{\phi}\nabla)=\lim_{\varepsilon\rightarrow0}\int_{\partial M_{\varepsilon}}TP(M,\phantom{}^{\varepsilon}\hat{\nabla},\phantom{}^{\phi}\hat{\nabla}),
\]
and 
\[
\lim_{\varepsilon\rightarrow0}\int_{\partial M_{\varepsilon}}TP(M,\phantom{}^{\varepsilon}\hat{\nabla},\phantom{}^{d}\nabla)=\lim_{\varepsilon\rightarrow0}\int_{\partial M_{\varepsilon}}TP(M,\phantom{}^{\varepsilon}\hat{\nabla},\phantom{}^{d}\hat{\nabla}).
\]
On the other hand, we also have $(\phantom{}^{d}\hat{\omega}-\phantom{}^{\varepsilon}\hat{\omega})_{|x=\varepsilon}=O(\varepsilon)$
by lemma \ref{DiffConnectForms}, so that $TP(M,\phantom{}^{\varepsilon}\nabla,\phantom{}^{d}\hat{\nabla})=\varepsilon Q$
for some $Q\in\Omega^{m-1}(\partial M)$, and the vanishing of the
last two limits above follows.
\end{proof}
The next proposition links the Chern-Simons corrections of exact and
product-type $\phi$-metrics:
\begin{prop}
\label{Prop_CS_phi_bry_cond}Let $M$ be a $2m$-dimensional manifold
with fibred boundary and $P\in S^{m}(\mathfrak{so}_{2m}^{*}(\mathbb{R}))$
an invariant polynomial. Let $\tilde{g}_{\phi}$ and $g_{\varepsilon}$
be product metrics on $^{\phi}TM$ and $TM$ resp., and consider the
metrics
\[
g_{\phi}=\tilde{g}_{\phi}+xA+x^{2}B\text{ and }\hat{g}_{\varepsilon}=g_{\varepsilon}+\varepsilon\cdot A_{|x=\varepsilon},
\]
with $B\in\Gamma(S^{2}[^{\phi}T^{*}M])$, and \textup{$A\in\Gamma(S^{2}[^{\phi}T^{*}M])$}
a symmetric bilinear form such that
\begin{align}
\text{(i) } & A(x^{2}\partial_{x},\cdot)\equiv0\text{ and }A(xY_{1},xY_{2})=O(x),\nonumber \\
\text{(ii) } & \left(V_{1}\cdot A(V_{2},xY_{1})-V_{2}\cdot A(V_{1},xY_{1})-A([V_{1},V_{2}],xY_{1})\right)_{|x=0}=0,\label{Eq_bndry_cond_xA}
\end{align}
for all $Y_{1},Y_{2}\in\mathfrak{X}(N)$ and all $V_{1},V_{2}\in\Gamma(W,T^{V}W)$.
One then has:
\[
\lim_{\varepsilon\rightarrow0}\int_{\partial M_{\varepsilon}}TP(M,\phantom{}^{\varepsilon}\hat{\nabla},\phantom{}^{\phi}\nabla)=\lim_{\varepsilon\rightarrow0}\int_{\partial M_{\varepsilon}}TP(M,\phantom{}^{\varepsilon}\nabla,\phantom{}^{\phi}\widetilde{\nabla}).
\]
\end{prop}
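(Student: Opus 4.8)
The strategy is to peel off, one at a time, the two perturbations of $\tilde{g}_{\phi}$ in $g_{\phi}=\tilde{g}_{\phi}+xA+x^{2}B$: first the second-order term $x^{2}B$ by invoking Proposition \ref{Prop_CS_prod_exact}, and then the first-order term $xA$ by a transgression-cocycle argument resting on Proposition \ref{Prop_Symm_bdry_cond}. Set $\hat{g}_{\phi}:=\tilde{g}_{\phi}+xA$; by hypothesis (i) this is an asymptotic $\phi$-metric, and $g_{\phi}=\hat{g}_{\phi}+x^{2}B$. Since $(g_{\varepsilon})_{|x=\varepsilon}=(\tilde{g}_{\phi})_{|x=\varepsilon}$ on $\partial M_{\varepsilon}=\{x=\varepsilon\}$, we have $(\hat{g}_{\varepsilon})_{|x=\varepsilon}=(g_{\varepsilon}+\varepsilon A_{|x=\varepsilon})_{|x=\varepsilon}=(\tilde{g}_{\phi}+xA)_{|x=\varepsilon}=(\hat{g}_{\phi})_{|x=\varepsilon}$, so Proposition \ref{Prop_CS_prod_exact} applies and yields
\[
\lim_{\varepsilon\rightarrow0}\int_{\partial M_{\varepsilon}}TP(M,{}^{\varepsilon}\hat{\nabla},{}^{\phi}\nabla)=\lim_{\varepsilon\rightarrow0}\int_{\partial M_{\varepsilon}}TP(M,{}^{\varepsilon}\hat{\nabla},{}^{\phi}\hat{\nabla}).
\]
It therefore suffices to show that the right-hand side equals $\lim_{\varepsilon\rightarrow0}\int_{\partial M_{\varepsilon}}TP(M,{}^{\varepsilon}\nabla,{}^{\phi}\widetilde{\nabla})$.

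For this I would use the additivity of transgression forms: for any three connections $\nabla_{0},\nabla_{1},\nabla_{2}$ on a fixed vector bundle, $TP(\nabla_{0},\nabla_{1})+TP(\nabla_{1},\nabla_{2})+TP(\nabla_{2},\nabla_{0})$ is exact. Viewing ${}^{\varepsilon}\hat{\nabla}$, ${}^{\varepsilon}\nabla$, ${}^{\phi}\widetilde{\nabla}$, ${}^{\phi}\hat{\nabla}$ as connections on $TM$ over the collar (via the isomorphism ${}^{\phi}TM\simeq TM$ for $x>0$), applying this identity along the chain ${}^{\varepsilon}\hat{\nabla},{}^{\varepsilon}\nabla,{}^{\phi}\widetilde{\nabla},{}^{\phi}\hat{\nabla}$ and integrating over the closed manifold $\partial M_{\varepsilon}$, over which the exact pieces integrate to zero, gives
\[
\int_{\partial M_{\varepsilon}}TP(M,{}^{\varepsilon}\hat{\nabla},{}^{\phi}\hat{\nabla})=\int_{\partial M_{\varepsilon}}TP(M,{}^{\varepsilon}\hat{\nabla},{}^{\varepsilon}\nabla)+\int_{\partial M_{\varepsilon}}TP(M,{}^{\varepsilon}\nabla,{}^{\phi}\widetilde{\nabla})+\int_{\partial M_{\varepsilon}}TP(M,{}^{\phi}\widetilde{\nabla},{}^{\phi}\hat{\nabla}).
\]
The claim then reduces to showing that the first and third integrals on the right vanish as $\varepsilon\rightarrow0$.

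The third integral is controlled by Proposition \ref{Prop_Symm_bdry_cond}, which applies with $\kappa=\tau+xA$ — its boundary condition (\ref{Eq_Symm_bdry_cond}) being precisely hypothesis (ii) — and gives $({}^{\phi}\hat{\omega}-{}^{\phi}\widetilde{\omega})_{|\partial M_{\varepsilon}}\in\varepsilon\cdot\Omega^{1}(\partial M_{\varepsilon},\text{End}({}^{\phi}TM)_{|\partial M_{\varepsilon}})$. For the first integral, $\hat{g}_{\varepsilon}-g_{\varepsilon}=\varepsilon A_{|x=\varepsilon}$ with $A$ subject to (i), and a computation analogous to that of Lemma \ref{DiffConnectForms}, carried out in a $g_{\varepsilon}$-orthonormal frame near $W$, shows that the components of $({}^{\varepsilon}\hat{\omega}-{}^{\varepsilon}\omega)_{|\partial M_{\varepsilon}}$ surviving restriction to $\{x=\varepsilon\}$ are $O(\varepsilon)$ as well. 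In both cases one then expresses the transgression restricted to $\partial M_{\varepsilon}$ in the usual way, as $m\int_{0}^{1}P(\theta_{\varepsilon},{}^{t}\Omega,\ldots,{}^{t}\Omega)_{|\partial M_{\varepsilon}}\,dt$ with $\theta_{\varepsilon}$ the pertinent difference of connection $1$-forms and ${}^{t}\Omega$ the curvature of the interpolating connection, and estimates the integral over $\partial M_{\varepsilon}$ exactly as in the proof of Proposition \ref{Prop_CS_prod_exact}, using Lemma \ref{Curvature_g_epsilon} to control ${}^{t}\Omega_{|\partial M_{\varepsilon}}$.

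The delicate point — and the step I expect to be the real obstacle — is this last estimate: since the auxiliary metrics dilate the base directions by $\varepsilon^{-1}$, the volume of $\partial M_{\varepsilon}$ grows like $\varepsilon^{-\dim N}$, so the naive bound ``$O(\varepsilon)\times\text{bounded}$'' does not suffice. One must keep track of which block of $\theta_{\varepsilon}$ carries the $O(\varepsilon)$ factor and exploit the compensating $O(\varepsilon^{2})$ decay of the base--base block of the curvature (recorded in Lemma \ref{Curvature_g_epsilon}), exactly as in the bookkeeping underlying Proposition \ref{Prop_CS_prod_exact}. Once this is carried out, both integrals vanish in the limit, and combining with the first paragraph yields $\lim_{\varepsilon\rightarrow0}\int_{\partial M_{\varepsilon}}TP(M,{}^{\varepsilon}\hat{\nabla},{}^{\phi}\nabla)=\lim_{\varepsilon\rightarrow0}\int_{\partial M_{\varepsilon}}TP(M,{}^{\varepsilon}\nabla,{}^{\phi}\widetilde{\nabla})$.
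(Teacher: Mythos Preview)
Your approach is essentially the paper's: telescope through the intermediate connections ${}^{\phi}\nabla\to{}^{\phi}\hat{\nabla}\to{}^{\phi}\widetilde{\nabla}$ and ${}^{\varepsilon}\hat{\nabla}\to{}^{\varepsilon}\nabla$, and bound each extra transgression piece by $O(\varepsilon)$ via Lemma~\ref{DiffConnectForms} and Proposition~\ref{Prop_Symm_bdry_cond}. The paper does this in one pass, writing $P(M,{}^{\phi}\nabla)-P(M,{}^{\varepsilon}\hat{\nabla})$ as a four-term telescoping sum and applying Stokes, rather than first peeling off $x^{2}B$ with Proposition~\ref{Prop_CS_prod_exact}; the content is the same.

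Two points to clean up. First, the worry in your last paragraph is misplaced. The statements $\theta_{|\partial M_{\varepsilon}}\in\varepsilon\cdot\Omega^{1}(\partial M_{\varepsilon},\mathrm{End})$ in Lemma~\ref{DiffConnectForms} and Proposition~\ref{Prop_Symm_bdry_cond} are in the \emph{coordinate} coframe $dy^{i},dz^{a}$ (look at the explicit formulas in their proofs), and the interpolation curvatures are likewise $O(1)$ in that coframe. Hence $P(\theta,{}^{t}\Omega,\ldots,{}^{t}\Omega)_{|\partial M_{\varepsilon}}=\varepsilon\cdot Q$ with $Q$ a bounded top-form on the fixed compact manifold $W\simeq\partial M_{\varepsilon}$, and $\int_{\partial M_{\varepsilon}}Q$ is bounded directly---integrating a differential form involves no metric, so there is no $\varepsilon^{-\dim N}$ volume factor to fight. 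There is also no ``compensating $O(\varepsilon^{2})$ decay of the base--base block'' in Lemma~\ref{Curvature_g_epsilon}: that block is $\phi^{*}({}^{h}\Omega)$, which is $O(1)$.

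Second, the bound on $({}^{\varepsilon}\hat{\omega}-{}^{\varepsilon}\omega)_{|\partial M_{\varepsilon}}$ is not a Lemma~\ref{DiffConnectForms}-type computation using only condition~(i); it requires condition~(ii) just as $({}^{\phi}\hat{\omega}-{}^{\phi}\widetilde{\omega})$ does, since the two pairs of metrics differ by the same perturbation $xA$ (resp.\ its restriction). The paper accordingly invokes Proposition~\ref{Prop_Symm_bdry_cond} for both.
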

\begin{rem*}
Let $\kappa=x\cdot A$. Condition (i) in this statement means that
$\kappa\in\Gamma(W,S^{2}T^{*}W)$, while condition (ii) is equivalent
to equation (\ref{Eq_Symm_bdry_cond}). Also, we identified the vectors
$Y\in\mathfrak{X}(N)$ with their horizontal lifts to $W$.\end{rem*}
\begin{proof}
With the usual notations for the Levi-Civita connections, we start
by writing: 
\begin{align*}
\left[P(M,\phantom{}^{\phi}\nabla)-P(M,\phantom{}^{\varepsilon}\hat{\nabla})\right] & =\left[P(M,\phantom{}^{\phi}\nabla)-P(M,\phantom{}^{\phi}\hat{\nabla})\right]+\left[P(M,\phantom{}^{\phi}\hat{\nabla})-P(M,\phantom{}^{\phi}\widetilde{\nabla})\right]\\
 & \phantom{=}-\left[P(M,\phantom{}^{\varepsilon}\hat{\nabla})-P(M,\phantom{}^{\varepsilon}\nabla)\right]+\left[P(M,\phantom{}^{\phi}\widetilde{\nabla})-P(M,\phantom{}^{\varepsilon}\nabla)\right],
\end{align*}
then, integrating on $M_{\varepsilon}$ and applying Stokes theorem,
we have for some $E(t)\in\Omega^{2m-2}(\partial M_{\varepsilon})$
and curvature forms $\phantom{}^{\phi}\widehat{\Omega}_{t}\text{, }\phantom{}^{\phi}\widetilde{\Omega}_{t}\text{ and }\phantom{}^{\varepsilon}\Omega_{t}$
of appropriate interpolation connections that:
\begin{align*}
\int_{\partial M_{\varepsilon}}TP(M,\phantom{}^{\varepsilon}\hat{\nabla},\phantom{}^{\phi}\nabla)-\int_{\partial M_{\varepsilon}}TP(M,\phantom{}^{\varepsilon}\nabla,\phantom{}^{\phi}\widetilde{\nabla})\\
=\int_{\partial M_{\varepsilon}}m\Bigg[\int_{0}^{1}dt\cdot\Big\{ dx\wedge E(t)+ & P(\phantom{}^{\phi}\omega-\phantom{}^{\phi}\hat{\omega},\phantom{}^{\phi}\widehat{\Omega}_{t})_{|x=\varepsilon}\\
\phantom{=\int_{\partial M_{\varepsilon}}m\int_{0}^{1}dt\cdot}+P(\phantom{}^{\phi}\hat{\omega}-\phantom{}^{\phi}\widetilde{\omega},\phantom{}^{\phi}\widetilde{\Omega}_{t})_{|x=\varepsilon} & -P(\phantom{}^{\varepsilon}\hat{\omega}-\phantom{}^{\varepsilon}\omega,\phantom{}^{\varepsilon}\Omega_{t})_{|x=\varepsilon}\Big\}\Bigg]_{\big|\partial M_{\varepsilon}},
\end{align*}
by lemma \ref{DiffConnectForms}, we have $(\phantom{}^{\phi}\omega-\phantom{}^{\phi}\hat{\omega})_{|x=\varepsilon}\in\varepsilon\cdot\Omega^{1}(\partial M_{\varepsilon},\text{End}\left(\phantom{}^{\phi}TM\right)|_{\partial M_{\varepsilon}})$,
and by proposition \ref{Prop_Symm_bdry_cond}:
\[
(\phantom{}^{\phi}\hat{\omega}-\phantom{}^{\phi}\widetilde{\omega})|_{x=\varepsilon},(\phantom{}^{\varepsilon}\hat{\omega}-\phantom{}^{\varepsilon}\omega)|_{x=\varepsilon}\in\varepsilon\cdot\Omega^{1}\left(\partial M_{\varepsilon},\text{End}\left(\phantom{}^{\phi}TM\right)|_{\partial M_{\varepsilon}}\right),
\]
which means that for some $Q\in\Omega^{2m-1}(\partial M_{\varepsilon})$,
we get:
\[
\int_{\partial M_{\varepsilon}}TP(M,\phantom{}^{\varepsilon}\hat{\nabla},\phantom{}^{\phi}\nabla)-\int_{\partial M_{\varepsilon}}TP(M,\phantom{}^{\varepsilon}\nabla,\phantom{}^{\phi}\widetilde{\nabla})=\varepsilon\cdot\int_{\partial M_{\varepsilon}}Q,
\]
and the result follows once we take the limit as $\varepsilon\rightarrow0$.
\end{proof}
The last result focuses on the cases needed for our Hitchin-Thorpe
inequality, namely when $P\in S^{m}(\mathfrak{so}_{2m}^{*}(\mathbb{R}))$
is the Hirzebruch $L$-polynomial or the Pfaffian.
\begin{prop}
\label{Prop_CS_vanishing}Let $M$ be compact a $4k$-manifold with
fibred boundary $F\rightarrow\partial M\overset{\phi}{\rightarrow}N$
and consider a splitting $TW=T^{V}W\oplus T^{H}W$ for $W=\partial M$.\\
Let $g_{\phi}$ be an exact metric on $M$ of the form
\[
g_{\phi}=\tilde{g}_{\phi}+x\cdot A+x^{2}\cdot B\text{, with }A,B\in\Gamma(M,S^{2}[^{\phi}T^{*}M]),
\]
where $\tilde{g}_{\phi}$ is a product metric, and $A$ satisfies
the following: 
\begin{align*}
\text{(i) } & A(x^{2}\partial_{x},\cdot)\equiv0\text{ and }A(xY_{1},xY_{2})=O(x),\\
\text{(ii) } & \left(V_{1}\cdot A(V_{2},xY_{1})-V_{2}\cdot A(V_{1},xY_{1})-A([V_{1},V_{2}],xY_{1})\right)_{|x=0}=0,
\end{align*}
for all $Y_{1},Y_{2}\in\mathfrak{X}(N)$ and all $V_{1},V_{2}\in\Gamma(W,T^{V}W)$.

If the dimension of the fibre $F$ is odd, and if $\hat{g}_{\varepsilon}$
is an asymptotic auxiliary metric such that $(\hat{g}_{\varepsilon})_{|\partial M_{\varepsilon}}\equiv(\tilde{g}_{\phi}+xA)_{|\partial M_{\varepsilon}}$,
then the Chern-Simons correction associated to the Euler form vanishes:
\begin{align*}
e_{CS}(\partial M,\phantom{}^{\phi}\nabla)= & -\lim_{\varepsilon\rightarrow0}\int_{\partial M_{\varepsilon}}Te(M,\phantom{}^{\varepsilon}\hat{\nabla},\phantom{}^{\phi}\nabla)=0.
\end{align*}
Furthermore, when $F$ is one-dimensional, the Chern-Simons correction
associated to the Hirzebruch $L$-polynomial also vanishes:
\[
L_{CS}(\partial M,\phantom{}^{\phi}\nabla)=-\lim_{\varepsilon\rightarrow0}\int_{\partial M_{\varepsilon}}TL(M,\phantom{}^{\varepsilon}\hat{\nabla},\phantom{}^{\phi}\nabla)=0.
\]
\end{prop}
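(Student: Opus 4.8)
The plan is to strip the problem down, using the two preceding propositions, to a pure adiabatic-limit statement for product $\phi$-metrics, and then to read off the vanishing from the parity of $\dim F$ together with the structure of the curvature near the boundary. First I would apply Proposition~\ref{Prop_CS_prod_exact} to replace the exact metric $g_\phi=\tilde g_\phi+xA+x^2B$ by the asymptotic metric $\tilde g_\phi+xA$, and then Proposition~\ref{Prop_CS_phi_bry_cond} — whose hypotheses on $A$ are precisely conditions (i)--(ii) above — to reduce further to the product metric $\tilde g_\phi$ together with the product auxiliary metric $g_\varepsilon$. It then suffices to prove that
\[
\lim_{\varepsilon\to0}\int_{\partial M_\varepsilon}TP\big(M,\,{}^{\varepsilon}\nabla,\,{}^{\phi}\widetilde\nabla\big)=0
\]
for $P$ the Pfaffian when $\dim F$ is odd and for $P$ the $L$-polynomial when $\dim F=1$. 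Writing $TP(M,{}^{\varepsilon}\nabla,{}^{\phi}\widetilde\nabla)|_{\partial M_\varepsilon}=m\int_0^1P(\beta,\Omega_t,\dots,\Omega_t)\,dt$ with $m=2k$, $\beta=({}^{\phi}\widetilde\omega-{}^{\varepsilon}\omega)|_{\partial M_\varepsilon}$ and $\Omega_t$ the curvature of the interpolating connection, the next task is to pin down the structure of $\beta$ and $\Omega_t$ on $\partial M_\varepsilon$ as $\varepsilon\to0$.

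From Lemma~\ref{Curvature_g_epsilon}, ${}^{\varepsilon}\Omega|_{\partial M_\varepsilon}$ is block diagonal for $\langle x^2\partial_x\rangle\oplus\phi^*TN\oplus T^VW$ up to an $O(\varepsilon)$ error, with vertical block the fibrewise curvature ${}^{\kappa}\Omega(y)$ plus the purely mixed form $\alpha(y)=\alpha_{ia}\,dy^i\wedge dz^a$; the analogous computation for $\tilde g_\phi$ — together with the fact that the fibres of $W$ are totally geodesic in the $x$-direction for $\tilde g_\phi$, so its shape operator on $\{x=\varepsilon\}$ is supported on $\phi^*TN$ — shows that ${}^{\phi}\widetilde\Omega|_{\partial M_\varepsilon}$ has the same block shape, hence so does $\Omega_t$, and that the only surviving entries of $\beta$ lie in the $\langle x^2\partial_x\rangle$--$\phi^*TN$ block. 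Once everything is written in the orthonormal coframe of the (blowing-up) induced metric on $\partial M_\varepsilon$, these surviving $\beta$-entries, the form $\alpha$, and all off-diagonal curvature entries are $O(\varepsilon)$, while $\operatorname{vol}(\partial M_\varepsilon)$ grows like $\varepsilon^{-\dim N}$.

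For $P=\mathrm{Pf}$ I would then invoke the combinatorics of the Pfaffian polarization: each monomial in $\mathrm{Pf}(\beta,\Omega_t^{2k-1})$ assigns one pair of frame indices to $\beta$ and partitions the remaining $4k-2$ indices into pairs fed to the $\Omega_t$'s, and a monomial that is top-degree on $\partial M_\varepsilon$ must exhaust all $\dim F$ vertical coframe directions. Since $\dim F$ is odd these cannot all be produced by vertical curvature entries (which contribute them two at a time), so at least one vertical index is matched to a non-vertical one, forcing an $O(\varepsilon)$ mixed factor; tracking the remaining powers of $\varepsilon$ carried by the base curvature and the $\beta$-factor then produces enough negative powers of $\varepsilon$ to beat the $\varepsilon^{-\dim N}$ volume growth, so the limit is $0$. (Equivalently, the limiting transgression form is the fibre integral over $F$ of the fibrewise Euler form wedged with a form on $N$, i.e.\ a multiple of $\chi(F)=0$ — the feature absent from the even-dimensional counterexample $\mathbb R^{2k}\times S^{2p}$.) When $\dim F=1$ and $P=L$ the argument is even more direct: the vertical block of every curvature is then a $1\times1$ skew matrix, hence zero, so every curvature or $\beta$ entry carrying the single fibre index is automatically one of the $O(\varepsilon)$ mixed entries, and since a top-degree monomial of $TL$ on $\partial M_\varepsilon$ contains that index exactly once, the same $\varepsilon$-counting gives $L_{CS}(\partial M,{}^{\phi}\nabla)=0$.

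The step I expect to be the real obstacle is precisely this adiabatic bookkeeping: determining the exact order in $\varepsilon$ of each block of $\beta$ and of $\Omega_t$ in the orthonormal coframe of the induced metric on $\partial M_\varepsilon$ (via Lemmas~\ref{DiffConnectForms} and \ref{Curvature_g_epsilon}), and then verifying that for every admissible index pairing the accumulated powers of $\varepsilon$ strictly exceed $\dim N$. Everything else — the reductions, and the combinatorial fact that an odd number of vertical coframe directions can never be supplied by vertical curvature alone — is comparatively formal.
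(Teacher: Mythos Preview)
Your approach is essentially that of the paper: reduce via Propositions~\ref{Prop_CS_prod_exact} and~\ref{Prop_CS_phi_bry_cond} to the product case, compute $\tilde\theta=({}^{\phi}\widetilde\omega-{}^{\varepsilon}\omega)|_{\partial M_\varepsilon}$ and the interpolation curvature explicitly, and then argue that the parity of $\dim F$ forces at least one mixed $O(\varepsilon)$ factor in every top-degree monomial. The paper's bookkeeping is cleaner than yours, however: it works in the coordinate coframe $\{dy^i,dz^a\}$ (with matrix entries taken in the $\tilde g_\phi$-orthonormal frame $\{x^2\partial_x,xe_i,e_a\}$), finds $\tilde\theta_i^0|_{x=\varepsilon}=dy^i$ (order $\varepsilon^0$) and the interpolation curvature as in Lemma~\ref{Curvature_g_epsilon}, and shows directly that the transgression form on $W\cong\partial M_\varepsilon$ is $O(\varepsilon)$ --- no volume growth enters, since one is integrating a fixed top-degree differential form over a fixed compact manifold. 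Your ``orthonormal coframe plus $\varepsilon^{-\dim N}$ volume'' framing is equivalent but adds a layer of rescaling that is unnecessary and makes the counting harder to get right (and ``enough negative powers'' should read positive). For the $L$-polynomial with $\dim F=1$, take care to separate matrix indices (which are traced over) from form indices: the paper's argument is that the single $dz^a$ required for a top form can only arise from $O(\varepsilon)$ entries, since the $1{\times}1$ vertical block vanishes by skew-symmetry and the remaining blocks of $\Omega_t$ are pure $dy$ at leading order --- your phrasing ``contains that index exactly once'' is correct only if read as the form index, not the matrix index.
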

\begin{proof}
It is sufficient to consider $g_{\phi}=\tilde{g}_{\phi}$ and $\hat{g}_{\varepsilon}=\tilde{g}_{\varepsilon}$
by proposition \ref{Prop_CS_phi_bry_cond}, and we thus carry-out
the computations in a $\tilde{g}_{\phi}$-orthonormal frame $\{x^{2}\partial_{x},xe_{i},e_{a}\}$
of $^{\phi}TM$ near the boundary (using the same notations as above
for indices, decompositions of matrices, etc). Defining $\tilde{\theta}:=(\phantom{}^{\phi}\widetilde{\omega}-\phantom{}^{\varepsilon}\omega)$,
we employ the interpolation connection $\phantom{}^{\phi}\widetilde{\nabla}_{t}=^{\varepsilon}\nabla+t\tilde{\theta}$,
and write $\phantom{}^{\phi}\widetilde{\omega}_{t}$ and $\phantom{}^{\phi}\widetilde{\Omega}_{t}$
for its connection and curvature forms. On $\partial M_{\varepsilon}=\{x=\varepsilon\}$,
the only nonvanishing entries of $\tilde{\theta}|_{x=\varepsilon}$
are found to be
\begin{equation}
\tilde{\theta}_{i}^{0}|_{x=\varepsilon}=-\tilde{\theta}_{0}^{i}|_{x=\varepsilon}=dy^{i}\text{ and }\tilde{\theta}_{\beta}^{\alpha}|_{x=\varepsilon}=0\text{ }\forall(\alpha,\beta)\ne(0,i)\label{Eq_theta_tilde}
\end{equation}
and since $\phantom{}^{\phi}\widetilde{\omega}_{t}=\phantom{}^{\varepsilon}\omega+t\tilde{\theta}$,
the Maurer-Cartan equation yields: 
\begin{align*}
\phantom{}^{\phi}\widetilde{\Omega}_{t} & =d(\phantom{}^{\varepsilon}\omega+t\tilde{\theta})+(\phantom{}^{\varepsilon}\omega+t\tilde{\theta})\wedge(\phantom{}^{\varepsilon}\omega+t\tilde{\theta})\\
 & =\left(d\phantom{}^{\varepsilon}\omega+\phantom{}^{\varepsilon}\omega\wedge\phantom{}^{\varepsilon}\omega\right)+t(d\tilde{\theta}+\phantom{}^{\varepsilon}\omega\wedge\tilde{\theta}+\tilde{\theta}\wedge\phantom{}^{\varepsilon}\omega)+t^{2}(\tilde{\theta}\wedge\tilde{\theta}),
\end{align*}
so that by lemma \ref{Curvature_g_epsilon}: 
\begin{equation}
\phantom{}^{\phi}\widetilde{\Omega}_{t}|_{x=\varepsilon}=\left(\begin{array}{ccc}
0 & t[dy^{k}\wedge\phantom{}^{\varepsilon}\omega_{k}^{i}] & O(\varepsilon)\\
-t[dy^{k}\wedge\phantom{}^{\varepsilon}\omega_{k}^{i}] & \phi^{*}\left(\phantom{}{}^{h}\Omega\right)+t^{2}[\delta_{kj}dy^{i}\wedge dy^{j}]+O(\varepsilon) & O(\varepsilon)\\
O(\varepsilon) & O(\varepsilon) & ^{\kappa}\Omega(y)+\alpha(y)+O(\varepsilon)
\end{array}\right).\label{Eq_interp_curv}
\end{equation}
\textit{Pfaffian:} For a $2m$-dimensional manifold, the Euler form
is given by 
\[
e(M,\phantom{}^{\phi}\widetilde{\nabla})\equiv Pf\left(\frac{\phantom{}^{\phi}\widetilde{\Omega}}{2\pi}\right)=\frac{c_{m}}{(2m)!}\sum_{\sigma\in S_{2m}}(-1)^{|\sigma|}\phantom{}^{\phi}\widetilde{\Omega}_{\sigma(1)}^{\sigma(2)}\wedge\cdots\wedge\phantom{}^{\phi}\widetilde{\Omega}_{\sigma(2m-1)}^{\sigma(2m)},
\]
for a constant $c_{m}\in\mathbb{R}\smallsetminus\{0\}$. If we write
\begin{align*}
\int_{\partial M_{\varepsilon}}Te(M,\phantom{}^{\varepsilon}\nabla,\phantom{}^{\phi}\widetilde{\nabla}) & =\int_{\partial M_{\varepsilon}}\left[\int_{0}^{1}\left(P(\phantom{}^{\phi}\widetilde{\omega}-\phantom{}^{\varepsilon}\omega,\phantom{}^{\phi}\widetilde{\Omega}_{t})_{|\partial M_{\varepsilon}}+dx\wedge E(t)\right)dt\right]_{|\partial M_{\varepsilon}}\\
 & =\int_{\partial M_{\varepsilon}}\int_{0}^{1}P(\widetilde{\theta},\phantom{}^{\phi}\widetilde{\Omega}_{t})_{|\partial M_{\varepsilon}}dt,
\end{align*}
where $E(t)\in\Omega^{2m-2}(M_{\varepsilon})$, and
\begin{align}
P(\tilde{\theta},\phantom{}^{\phi}\widetilde{\Omega}_{t})_{|\partial M_{\varepsilon}} & =\frac{c_{m}}{(2m)!}\sum_{\sigma\in S_{2m}}(-1)^{|\sigma|}\left[\tilde{\theta}_{\sigma(1)}^{\sigma(2)}\wedge\left(\phantom{}^{\phi}\widetilde{\Omega}_{t}\right)_{\sigma(3)}^{\sigma(4)}\wedge\cdots\wedge\left(\phantom{}^{\phi}\widetilde{\Omega}_{t}\right)_{\sigma(2m-1)}^{\sigma(2m)}\right]_{\big|x=\varepsilon}.\label{Eq_integ_Te}
\end{align}
Now for $\dim F=2f+1$ and $\dim N=2n$, the last expression combined
with equations (\ref{Eq_integ_Te}), (\ref{Eq_theta_tilde}) and (\ref{Eq_interp_curv})
yield that the nonvanishing summands which are proportional to the
volume form on $\partial M_{\varepsilon}$ necessarily come from products
such as:
\begin{equation}
\tilde{\theta}_{0}^{i_{1}}\wedge\underbrace{\left[\left(\phantom{}^{\phi}\widetilde{\Omega}_{t}\right)_{i_{2}}^{i_{3}}\wedge\cdots\wedge\left(\phantom{}^{\phi}\widetilde{\Omega}_{t}\right)_{i_{2n-2}}^{i_{2n-1}}\right]}_{\in\Omega^{2n-2}(N)\oplus\varepsilon\Omega^{2n-2}(\partial M_{\varepsilon})}\wedge\underbrace{\left(\phantom{}^{\phi}\widetilde{\Omega}_{t}\right)_{i_{2n}}^{a_{1}}}_{\propto\varepsilon(dy^{i}\wedge dz^{a})}\wedge\underbrace{\left[\left(\phantom{}^{\phi}\widetilde{\Omega}_{t}\right)_{a_{2}}^{a_{3}}\wedge\cdots\wedge\left(\phantom{}^{\phi}\widetilde{\Omega}_{t}\right)_{a_{2f}}^{a_{2f+1}}\right]}_{\in\Omega^{2f}(F)\oplus\Omega^{2f}(\partial M_{\varepsilon})}.\label{Eq_Te_odd_F}
\end{equation}
where the $i_{j}$'s are indices coming from the base $N$ and the
$a_{j}$'s are associated to the fibre $F$. Again by equation (\ref{Eq_interp_curv}),
the presence of factors $\left(\phantom{}^{\phi}\widetilde{\Omega}_{t}|_{\partial M_{\varepsilon}}\right)_{i_{2n}}^{a_{1}}\in\varepsilon\cdot\Omega^{2}(\partial M_{\varepsilon})$
for odd dimensional fibres leads to:
\[
e_{CS}(\partial M,\phantom{}^{\phi}\nabla)=-\lim_{\varepsilon\rightarrow0}\int_{\partial M_{\varepsilon}}Te(M,\phantom{}^{\varepsilon}\nabla,\phantom{}^{\phi}\widetilde{\nabla})=\lim_{\varepsilon\rightarrow0}\left[\varepsilon\cdot\int_{\partial M_{\varepsilon}}Q\right]=0,
\]
for some $Q\in\Omega^{2m-1}(\partial M)$.

\textit{$L$-polynomial:} Let $M$ be $4k$-dimensional with one-dimensional
fibres of the boundary. If we write
\begin{align*}
\int_{\partial M_{\varepsilon}}TL(M,\phantom{}^{\varepsilon}\nabla,\phantom{}^{\phi}\widetilde{\nabla}) & =\int_{\partial M_{\varepsilon}}\int_{0}^{1}P(\tilde{\theta},\phantom{}^{\phi}\widetilde{\Omega}_{t})|_{x=\varepsilon}dt,
\end{align*}
then the summands of $P(\tilde{\theta},\phantom{}^{\phi}\widetilde{\Omega}_{t})|_{x=\varepsilon}$
that are proportional to a volume form on $\partial M_{\varepsilon}$
are obtained from products of the form:
\[
dy^{i_{1}}\wedge\underbrace{\left[\left(\phantom{}^{\phi}\widetilde{\Omega}_{t}\right)_{i_{2}}^{0}\wedge\left(\phantom{}^{\phi}\widetilde{\Omega}_{t}\right)_{i_{3}}^{i_{4}}\wedge\cdots\wedge\left(\phantom{}^{\phi}\widetilde{\Omega}_{t}\right)_{i_{2n-2}}^{i_{2n-1}}\right]}_{\in\Omega^{2n-2}(N)\oplus\varepsilon\Omega^{2n-2}(\partial M_{\varepsilon})}\wedge\underbrace{\left(\phantom{}^{\phi}\widetilde{\Omega}_{t}\right)_{i_{2n}}^{a}}_{\propto\varepsilon(dy^{k}\wedge dz^{a})},
\]
and
\[
dy^{i_{1}}\wedge\underbrace{\left(\phantom{}^{\phi}\widetilde{\Omega}_{t}\right)_{a}^{0}}_{\propto\varepsilon(dy^{k}\wedge dz^{a})}\wedge\underbrace{\left[\left(\phantom{}^{\phi}\widetilde{\Omega}_{t}\right)_{i_{1}}^{i_{2}}\wedge\left(\phantom{}^{\phi}\widetilde{\Omega}_{t}\right)_{i_{3}}^{i_{4}}\wedge\cdots\wedge\left(\phantom{}^{\phi}\widetilde{\Omega}_{t}\right)_{i_{2n-2}}^{i_{2n-1}}\right]}_{\in\Omega^{2n-2}(N)\oplus\varepsilon\Omega^{2n-2}(\partial M_{\varepsilon})}
\]
which means that if $\dim F=1$ then $TL(M,\phantom{}^{\varepsilon}\nabla,\phantom{}^{\phi}\widetilde{\nabla})=\varepsilon Q$
for some $Q\in\Omega^{4k-1}(\partial M_{\varepsilon})$, and the claim
follows.
\end{proof}

\subsection{Counter-examples: }

We discuss some cases in which propositions \ref{Prop_CS_prod_exact}
and \ref{Prop_CS_vanishing} do not hold anymore, as well as the mistakes
in \cite{[DaW]}. We use the same notations as above.

1) As shown in propositions \ref{Prop_CS_phi_bry_cond} and \ref{Prop_Symm_bdry_cond},
the conditions (\ref{Eq_bndry_cond_xA}) on the perturbation term
$A\in\Gamma(S^{2}[^{\phi}T^{*}M])$ for an asymptotic metric are necessary
for the vanishing of the Chern-Simons corrections, and for the equality
\[
\lim_{\varepsilon\rightarrow0}\int_{\partial M_{\varepsilon}}TP(M,\phantom{}^{\varepsilon}\nabla,\phantom{}^{\phi}\hat{\nabla})=\lim_{\varepsilon\rightarrow0}\int_{\partial M_{\varepsilon}}TP(M,\phantom{}^{\varepsilon}\nabla,\phantom{}^{\phi}\widetilde{\nabla}),
\]
to hold. The analog of proposition \ref{Prop_CS_phi_bry_cond} in
\cite{[DaW]} is Lemma 3.7, and claims the same equality, but the
statement doesn't hold without the conditions \ref{Eq_bndry_cond_xA},
and this causes theorem 3.6 of \cite{[DaW]} to be erroneous too.
At the end of p.563, the equation 
\[
S=xS_{1}+dx\otimes S'
\]
is false (in our notations, $S\equiv\tilde{\theta}=\phantom{}^{\phi}\hat{\nabla}-\phantom{}^{\phi}\widetilde{\nabla}$).
The correct expressions are equations (\ref{Eq_theta_tilde_bndry_1})
and (\ref{Eq_theta_tilde_bndry_2}) in the proof of proposition \ref{Prop_Symm_bdry_cond}.

2) In lemma 4.2 of \cite{[DaW]}, the claim is that for an asymptotic
metric $g_{\phi}=\tilde{g}_{\phi}+xA$ with $A\in\Gamma(S^{2}[^{\phi}T^{*}M])$
such that $A(x^{2}\partial_{x},\cdot)\equiv0$, one has.
\[
\lim_{\varepsilon\rightarrow0}\int_{\partial M_{\varepsilon}}TP(M,\phantom{}^{\varepsilon}\nabla,\phantom{}^{\phi}\hat{\nabla})=0.
\]
Here, the conditions (\ref{Eq_bndry_cond_xA}) of proposition \ref{Prop_CS_phi_bry_cond}
along with the restrictions on $\dim F$ in proposition \ref{Prop_CS_vanishing}
are necessary for this statement to be true, as well as for theorem
4.3 in \cite{[DaW]}. The mistake in the proof of lemma 4.2 of \cite{[DaW]}
is the formula
\[
P(\theta,\Omega_{t},\cdots,\Omega_{t})=\pi^{*}(\alpha)+O(\varepsilon)
\]
on page 566, which comes from computational mistakes in the decomposition
of $\Omega_{t}$. The correct decomposition of $\Omega_{t}$ is given
in equation (\ref{Eq_interp_curv}) of the proof of \ref{Prop_CS_vanishing}.

3) If the fibre $F$ is even-dimensional, then $\lim_{\varepsilon\rightarrow0}\int_{\partial M_{\varepsilon}}Te(M,\phantom{}^{\varepsilon}\nabla,\phantom{}^{\phi}\widetilde{\nabla})$
does not vanish in general. Going back to the proof of proposition
\ref{Prop_CS_vanishing}, if we take $\dim F=2f$ and $\dim N=2n+1$
and employ the same notations, we find that the summands in the integrand
of $Te(M,\phantom{}^{\varepsilon}\nabla,\phantom{}^{\phi}\widetilde{\nabla})|_{\partial M_{\varepsilon}}$
that are multiples of a volume form on $\partial M_{\varepsilon}$
come from products such as
\[
\tilde{\theta}_{0}^{i_{1}}\wedge\underbrace{\left[\left(\phantom{}^{\phi}\widetilde{\Omega}_{t}\right)_{i_{2}}^{i_{3}}\wedge\cdots\wedge\left(\phantom{}^{\phi}\widetilde{\Omega}_{t}\right)_{i_{2n}}^{i_{2n+1}}\right]}_{\in\Omega^{2n}(N)\oplus\varepsilon\Omega^{2n}(\partial M_{\varepsilon})}\wedge\underbrace{\left[\left(\phantom{}^{\phi}\widetilde{\Omega}_{t}\right)_{a_{1}}^{a_{2}}\wedge\left(\phantom{}^{\phi}\widetilde{\Omega}_{t}\right)_{a_{3}}^{a_{4}}\wedge\cdots\wedge\left(\phantom{}^{\phi}\widetilde{\Omega}_{t}\right)_{a_{2f-1}}^{a_{2f}}\right]}_{\in\Omega^{2f}(F)\oplus\Omega^{2f}(\partial M_{\varepsilon})},
\]
which implies that in all generality, $Te(M,\phantom{}^{\varepsilon}\nabla,\phantom{}^{\phi}\widetilde{\nabla})|_{\partial M_{\varepsilon}}$
is not of order at least one in $\varepsilon$ (c.f. equation (\ref{Eq_Te_odd_F})). 

4) If $\dim F>1$, then $\lim_{\varepsilon\rightarrow0}\int_{\partial M_{\varepsilon}}TL(M,\phantom{}^{\varepsilon}\nabla,\phantom{}^{\phi}\widetilde{\nabla})$
does not vanish in general. Recall that for a $4k$-dimensional manifold
$(M,\tilde{g}_{\phi})$, the Hirzebruch $L$-polynomial is given by
\[
L(M,\phantom{}^{\phi}\widetilde{\nabla})\equiv L_{k}[p_{1},\cdots,p_{k}]\left(\frac{\phantom{}^{\phi}\widetilde{\Omega}}{2\pi}\right)=\sum_{\alpha}a_{\alpha}\bigwedge_{j=1}^{k}\left(p_{j}(\phantom{}^{\phi}\widetilde{\Omega}/2\pi)\right)^{\wedge\alpha_{j}},
\]
where the multi-indices $\alpha=(\alpha_{1},\cdots,\alpha_{k})\in\mathbb{N}^{k}$
are such that
\[
\alpha_{1}+2\alpha_{2}+\cdots+j\alpha_{j}+\cdots+k\alpha_{k}=k,
\]
and where the coefficients $a_{\alpha}$ are rational, the $p_{j}(\phantom{}^{\phi}\widetilde{\Omega}/2\pi)$
denotes the $j$-th Pontrjagin form, defined as the form of degree
$4j$ in the expansion 
\[
\det\left(\text{Id}_{4k}+\frac{\phantom{}^{\phi}\widetilde{\Omega}}{2\pi}\right)=1+p_{1}\left(\frac{\phantom{}^{\phi}\widetilde{\Omega}}{2\pi}\right)+\cdots+p_{j}\left(\frac{\phantom{}^{\phi}\widetilde{\Omega}}{2\pi}\right)+\cdots+p_{\dim M/2}\left(\frac{\phantom{}^{\phi}\widetilde{\Omega}}{2\pi}\right),
\]
and $L_{k}[p_{1},\cdots,p_{k}]$ is homogeneous of degree $2k$ in
the entries of $(\phantom{}^{\phi}\widetilde{\Omega}/2\pi)$. We look
at the case where $k=2$ ($\dim\partial M=7$) and $\dim F=3$, in
which one has: 
\begin{align*}
p_{1}(\Omega/2\pi) & =-\frac{1}{2(2\pi)^{2}}\text{Tr}(\Omega^{2}),\\
p_{2}(\Omega/2\pi) & =\frac{1}{8(2\pi)^{4}}[\{\text{Tr}(\Omega^{2})\}^{2}-\text{Tr}(\Omega^{4})],
\end{align*}
and 
\[
L_{2}[p_{1},p_{2}](\Omega/2\pi)=\frac{1}{45}[7p_{2}-(p_{1})^{2}](\Omega/2\pi)=\frac{(2\pi)^{-4}}{360}\left[5\left(\text{Tr}(\Omega^{2})\right)^{2}-7\text{Tr}(\Omega^{4})\right].
\]
Using the notations in the proof of proposition \ref{Prop_CS_vanishing},
analyzing the integrand of $TL(M,\phantom{}^{\varepsilon}\nabla,\phantom{}^{\phi}\widetilde{\nabla})|_{\partial M_{\varepsilon}}=\int_{0}^{1}dt[P(\tilde{\theta},\phantom{}^{\phi}\widetilde{\Omega}_{t})|_{\partial M_{\varepsilon}}]$
yields that for some constant $c\ne0$:
\[
P(\tilde{\theta},\phantom{}^{\phi}\widetilde{\Omega}_{t})=c\cdot\underbrace{\left[\sum_{i=1}^{\dim N=4}\tilde{\theta}_{i}^{0}\wedge\left(\phantom{}^{\phi}\widetilde{\Omega}_{t}\right)_{i}^{0}\right]}_{\propto(dy^{i}\wedge dy^{j}\wedge dy^{k})}\wedge\underbrace{\left[\sum_{a,b=1}^{\dim F=3}\left(\phantom{}^{\phi}\widetilde{\Omega}_{t}\right)_{a}^{b}\wedge\left(\phantom{}^{\phi}\widetilde{\Omega}_{t}\right)_{a}^{b}\right]}_{\propto(dy^{l}\wedge dz^{1}\wedge dz^{2}\wedge dz^{3})}+O(\varepsilon),
\]
and by equations (\ref{Eq_theta_tilde}) and (\ref{Eq_interp_curv})
above, $TL(M,\phantom{}^{\varepsilon}\nabla,\phantom{}^{\phi}\widetilde{\nabla})|_{\partial M_{\varepsilon}}$
is not of order 1 in $\varepsilon$, and may not necessarily vanish
when $\varepsilon\rightarrow0$.

\section{Multi Taub-NUT metrics\label{Apndx2}}

As above, we have $k$ points $\{p_{j}\}_{j=1}^{k}$ in $\mathbb{R}^{3}$,
a principal circle bundle $M\overset{\pi}{\longrightarrow}\mathbb{R}^{3}\smallsetminus\{p_{j}\}$
of degree -1 near each $p_{j}$, and we are considering the following
metric on $M$: 
\[
g=\pi^{*}[V\cdot((dx^{1})^{2}+(dx^{2})^{2}+(dx^{3})^{2})]+\pi^{*}(V^{-1})\cdot(d\theta+\pi^{*}\omega)^{2},
\]
where $\theta$ is a coordinate on the fibres, $\omega$ is a connection
1-form on $\mathbb{R}^{3}\smallsetminus\{p_{j}\}$ with curvature
$d\omega=*_{\mathbb{R}^{3}}dV$, and the function $V:\mathbb{R}^{3}\smallsetminus\{p_{j}\}_{j=1}^{k}\rightarrow\mathbb{R}$
is defined as: 
\[
V(x)=1+\frac{1}{2}{\displaystyle {\sum_{j=1}^{k}}\frac{1}{|x-p_{j}|}}.
\]
The harmonic function $V$ determines $g_{0}$ uniquely, since for
any gauge transformation $\omega\mapsto\omega+df$ with $f\in\mathcal{C}^{\infty}(\mathbb{R}^{3}\smallsetminus\{p_{j}\})$,
we can make the change of variable $\theta\mapsto\theta+f$ to keep
the same metric (\cite{[WW]}, section 6.2).

Now we briefly recall the construction of the smooth completion $(M_{0},g_{0})$,
following section 3 of \cite{[ES]} (see also \cite{[AKL]} and \cite{[LeB91]}).
For $0<\delta<\min_{1\le j<i\le k}|p_{i}-p_{j}|$, we consider the
punctured open balls $B_{i}=\mathbb{B}_{\delta}^{3}(p_{i})\smallsetminus\{p_{i}\}\subset\mathbb{R}^{3}$
($i=1,\cdots,k$), and note that $\pi^{-1}(B_{i})$ are diffeomorphic
to $\mathbb{B}_{\delta}^{4}(0)\smallsetminus\{0\}$ in such a way
that the $S^{1}$ action coincides with scalar multiplication on $\mathbb{R}^{4}\simeq\mathbb{C}^{2}$.
We then define
\[
M_{0}:=(M\sqcup\bigsqcup_{j=1}^{k}\mathbb{B}_{\delta}^{4}(q_{i}))/\sim,
\]
where $\sim$ stands for the identification of the $\pi^{-1}(B_{i})$
with punctured 4-balls $\mathbb{B}_{\delta}^{4}(q_{i})\smallsetminus\{q_{i}\}$.
The map $\pi:M\rightarrow\mathbb{R}^{3}\smallsetminus\{p_{j}\}$ is
then smoothly extended to $\pi:M_{0}\rightarrow\mathbb{R}^{3}$ with
$q_{i}=\pi^{-1}(p_{i})$, and $\pi$ acts as the projection to the
space of $S^{1}$-orbits (under scalar multiplication) when restricted
to the balls $\mathbb{B}_{\delta}^{4}(q_{i})$. To see that we can
also extend $g$ to a smooth metric $g_{0}$ on $M_{0}$, we note
that in the vicinity of the points $q_{i}$, we can write $g=g_{i}^{F}+\alpha_{i}$
where $g_{i}^{F}$ is isometric to the flat Euclidean metric on $\mathbb{B}_{\delta}^{4}(q_{i})$,
and $\alpha_{i}$ is a symmetric bilinear form which is smooth on
the same neighbourhood. Indeed, let $(r,\varphi,\psi)$ be the spherical
coordinates centred at $p_{i}\in\mathbb{R}^{3}$, and write $V=V_{i}+f_{i}$
with $V_{i}(x)=(2|x-p_{i}|)^{-1}=1/(2r)$ and $f_{i}$ smooth on $\mathbb{B}_{\delta}^{3}(p_{i})$.
Since 
\[
\ast_{\mathbb{R}^{3}}dV_{i}=-\frac{1}{2}\sin\varphi d\varphi\wedge d\psi=d\left(\frac{\cos\varphi d\psi}{2}\right),
\]
we may take $\omega_{i}(x)=(\cos\varphi d\psi)/2$ as a local connection
1-form for the curvature $\ast dV_{i}$, and the restriction $g_{|\mathbb{B}_{\delta}^{4}(q_{i})\smallsetminus\{q_{i}\}}=g_{i}^{F}+\alpha_{i}$
is then given by: 
\begin{align*}
g_{i}^{F} & =\pi^{*}[V_{i}\cdot((dx^{1})^{2}+(dx^{2})^{2}+(dx^{3})^{2})]+\pi^{*}(V_{i}^{-1})\cdot(d\theta+\pi^{*}\omega_{i})^{2}\\
 & =\frac{(dr)^{2}}{2r}+\frac{r}{2}\left[(d\varphi)^{2}+(d\psi)^{2}+(d(2\theta))^{2}+2\cos\varphi d(2\theta)\odot d\psi\right],\\
\alpha_{i} & =f_{i}\pi^{*}((dx^{1})^{2}+(dx^{2})^{2}+(dx^{3})^{2})+V^{-1}\cdot\big[-(f_{i}/V_{i})(d\theta+\pi^{*}\omega_{i})^{2}\\
 & \phantom{=V^{-1}\cdot\big[}+\pi^{*}(\omega-\omega_{i})^{2}+2(d\theta+\pi^{*}\omega_{i})\odot\pi^{*}(\omega-\omega_{i})\big].
\end{align*}
On $\mathbb{B}_{\delta}^{4}(q_{i})$, we introduce the following coordinates
\cite{[EH78]} :
\begin{align*}
y^{1}+iy^{2} & =\sqrt{2r}\cos\left(\frac{\varphi}{2}\right)\exp\left[i\left(\theta+\frac{\psi}{2}\right)\right]\\
y^{3}+iy^{4} & =\sqrt{2r}\sin\left(\frac{\varphi}{2}\right)\exp\left[i\left(\theta-\frac{\psi}{2}\right)\right],
\end{align*}
and we obtain that $g_{i}^{F}=(dy^{1})^{2}+(dy^{2})^{2}+(dy^{3})^{2}+(dy^{4})^{2}$.
Letting $y\rightarrow q_{i}$ in $M_{0}$, we have $\alpha_{i}\rightarrow f_{i}\pi^{*}((dx^{1})^{2}+(dx^{2})^{2}+(dx^{3})^{2})_{|p_{i}}$,
and it is then obvious that $g=g_{i}^{F}+\alpha_{i}$ can be smoothly
extended to $q_{i}$.

Finally, the condition $d\omega=*_{\mathbb{R}^{3}}dV$ implies that
$g$ is Ricci-flat, and we can define 3 compatible parallel complex
structures $\{J_{l}\}_{l=1}^{3}$ on $M$ by taking
\[
J_{1}:\text{ }\pi^{*}dx^{1}\mapsto V^{-1}(d\theta+\pi^{*}\omega)\text{, }\pi^{*}dx^{2}\mapsto\pi^{*}dx^{3},
\]
and permuting the action on these forms for $J_{2}$ and $J_{3}$.
Since $M_{0}$ is simply connected, we obtain that $g_{0}$ is a complete
Ricci-flat hyper-K\"ahler metric for this space.

\section*{\textbf{Acknowledgements}}

The author is thankful to his M.Sc. advisor F. Rochon, for giving
him the opportunity to work on this problem and for many enlightening
discussions, as well as to an anonymous referee for his many comments
that substantially improved this article (he provided the counter-example
in the remark following theorem \ref{Index}). This work was partially
supported by an FRQNT Doctoral Scholarship.

\bibliography{Biblio}
 \bibliographystyle{alpha}

\curraddr{Department of Mathematics, University of Toronto}

\email{ajzer@math.toronto.edu}
\end{document}